\renewcommand{\1}{\underline{1}}
\renewcommand{\a}{\alpha}
\newcommand{\A}{\underline{A}}
\newcommand{\B}{\underline{B}}
\renewcommand{\b}{\beta}
\renewcommand{\c}{\gamma}
\renewcommand{\d}{\underline{d}}
\newcommand{\K}{\bb{K}}
\renewcommand{\ll}{\lambda}
\newcommand{\m}{\mathfrak{m}}
\newcommand{\mfc}{\mathfrak{c}}
\newcommand{\N}{\underline{N}}
\renewcommand{\o}{\otimes}
\newcommand{\oo}{\otimes}
\renewcommand{\r}{\underline{r}}
\newcommand{\s}{\sigma}
\renewcommand{\SS}{\mathfrak{S}}
\renewcommand{\t}{\tau}
\newcommand{\FF}{\mathscr{F}}
\newcommand{\II}{\mathscr{I}}
\newcommand{\KK}{\mathscr{K}}
\newcommand{\SSS}{\mathscr{S}}
\newcommand{\GL}{{GL}}
\newcommand{\Hom}{\operatorname{Hom}} 
\newcommand{\Sym}{\operatorname{Sym}} 
\newcommand{\bb}[1]{\mathbb{#1}}
\newcommand{\defi}[1]{{\upshape\sffamily #1}}
\renewcommand{\rm}[1]{\mathrm{#1}}
\newcommand{\mc}[1]{\mathcal{#1}}
\newcommand{\ol}[1]{\overline{#1}}
\newcommand{\ul}[1]{\underline{#1}}
\newcommand{\tl}[1]{\tilde{#1}}
\newcommand{\scpr}[2]{\left\langle #1,#2 \right\rangle}
\newcommand{\ccircle}[1]{*+<1ex>[o][F-]{#1}}
\newcommand{\ccirc}[1]{\xymatrix@1{*+<1ex>[o][F-]{#1}}}
\newcommand{\ytabonecol}[2]{
\begin{array}{|c|}
 #1 \\
 #2 \\
 \end{array}
}
\def\lra{\longrightarrow}
\newcommand{\PP}{\mathbb{P}}
\newcommand{\CC}{\mathbb{C}}
\newtheorem{theorem}{Theorem}[section]
\newtheorem{lemma}[theorem]{Lemma}
\newtheorem{conjecture}[theorem]{Conjecture}
\newtheorem{proposition}[theorem]{Proposition}
\newtheorem*{corollary*}{Corollary}
\newcounter{tmaincounter}
\newtheorem{ttmain}[tmaincounter]{Theorem}
\theoremstyle{definition}
\newtheorem{definition}[theorem]{Definition}
\newtheorem*{definition*}{Definition}
\newtheorem{example}[theorem]{Example}
\theoremstyle{remark}
\newtheorem{remark}[theorem]{Remark}
\newtheorem*{remark*}{Remark}
\numberwithin{equation}{section}
\newcommand{\claudiu}[1]{{\color{red} \sf $\clubsuit\clubsuit\clubsuit$ Claudiu: [#1]}}
\newcommand{\luke}[1]{{\color{blue} \sf $\clubsuit\clubsuit\clubsuit$ Luke: [#1]}}
\begin{document}

\title{Tangential varieties of Segre--Veronese varieties}

\author{Luke Oeding}
\address{Department of Mathematics, University of California, Berkeley, CA 94720-3840}
\email{oeding@math.berkeley.edu}

\author{Claudiu Raicu}
\address{Department of Mathematics, Princeton University, Princeton, NJ 08544-1000\newline
\indent Institute of Mathematics ``Simion Stoilow'' of the Romanian Academy}
\email{craicu@math.princeton.edu}

\subjclass[2010]{14L30, 15A69, 15A72}

\date{\today}

\keywords{Tangential varieties, Segre varieties, Veronese varieties}

\begin{abstract} We determine the minimal generators of the ideal of the tangential variety of a Segre--Veronese variety, as well as the decomposition into irreducible $\GL$--representations of its homogeneous coordinate ring. In the special case of a Segre variety, our results confirm a conjecture of Landsberg and Weyman.
\end{abstract}

\maketitle

\section{Introduction}

For a projective algebraic variety $X\subset \PP^{N}$, the \defi{tangential variety} $\tau(X)$ (also known as the tangent developable or the first osculating variety of $X$) is the union of all points on all embedded tangent lines to $X$. The points in $\tau(X)$ together with those lying on the secant lines to $X$ form the (first) \defi{secant variety} of $X$, denoted $\s_2(X)$. Tangential and secant varieties were studied classically, among others by Terracini, and were brought into a modern light by F. L. Zak~\cite{Zak}. A basic problem, given any projective variety, is to understand its defining ideal and homogeneous coordinate ring. In \cite{raiGSS}, the second author solves this problem for $\s_2(X)$ when $X$ is a Segre--Veronese variety. The purpose of this paper is to solve the corresponding problem for~$\tau(X)$.

For a sequence $\d=(d_1,\cdots,d_n)$ of positive integers, the \defi{Segre--Veronese variety} is the image $X$ of the embedding
\[SV_{\d}:\bb{P}V_1^*\times\cdots\times\bb{P}V_n^*\lra\bb{P}(\Sym^{d_1}V_1^*\o\cdots\o \Sym^{d_n}V_n^*),\]
\[([e_1],\cdots,[e_n])\longmapsto [e_1^{d_1}\o\cdots\o e_n^{d_n}],\text{ for }e_i\in V_i^*\footnote{For convenience we dualize vector spaces here so that our modules of polynomials may be written without the dual.}. \]

\begin{ttmain}\label{thm:maincoordring} Let $X=SV_{\d}(\bb{P}V_1^*\times\bb{P}V_2^*\times\cdots\times\bb{P}V_n^*)$ be a Segre--Veronese variety over a field $\K$ of characteristic zero. The degree $r$ part of the homogeneous 	coordinate ring of $\tau(X)$ decomposes as
\[\K[\tau(X)]_r=\bigoplus_{\substack{\ll=(\ll^1,\cdots,\ll^n)\\ \ll^j\vdash rd_j}}(S_{\ll^1}V_1\o\cdots\o S_{\ll^n}V_n)^{m_{\ll}},\]
where $m_{\ll}$ is either $0$ or $1$, obtained as follows. Set 
\[f_{\ll}=\max_{j=1,\cdots,n}\left\lceil\frac{\ll_2^j}{d_j}\right\rceil,\quad e_{\ll}=\ll^1_2+\cdots+\ll^n_2.\]
If some $\ll^j$ has more than two parts, or $e_{\ll}<2f_{\ll}$, or $e_{\ll}>r$, then $m_{\ll}=0$, else $m_{\ll}=1$. 
\end{ttmain}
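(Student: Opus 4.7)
The plan combines an upper bound via the inclusion $\tau(X)\subseteq \s_2(X)$ with a matching lower bound via an explicit parametrization of $\hat\tau(X)$, together forcing $m_\ll\in\{0,1\}$ with the stated pattern.

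\emph{Upper bound from $\s_2(X)$.} Restriction of functions along $\tau(X)\hookrightarrow\s_2(X)$ gives a $\GL$-equivariant surjection $\K[\s_2(X)]_r\twoheadrightarrow\K[\tau(X)]_r$. By the main theorem of \cite{raiGSS}, the irreducible constituents of $\K[\s_2(X)]_r$ are exactly the modules $S_\ll:=S_{\ll^1}V_1\o\cdots\o S_{\ll^n}V_n$ with each $\ll^j$ of length at most $2$ and $e_\ll\leq r$, each appearing with multiplicity one. This yields $m_\ll\leq 1$ and $m_\ll=0$ whenever some $\ll^j$ has more than two parts or $e_\ll>r$, accounting for two of the three conditions.

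\emph{Parametrization and weight analysis.} To address the extra condition $e_\ll\geq 2f_\ll$, I would use the $\GL$-equivariant parametrization
\[
\pi\colon\prod_j(V_j^*\oplus V_j^*)\times\K\lra\Sym^{d_1}V_1^*\o\cdots\o\Sym^{d_n}V_n^*,\qquad((e_j,f_j),s)\longmapsto s\prod_j e_j^{d_j}+\sum_j d_j\prod_{i\neq j}e_i^{d_i}\cdot e_j^{d_j-1}f_j,
\]
whose image closure is $\hat\tau(X)$. A direct $(e_j,f_j,s)$-multidegree computation shows that for $F\in\Sym^r W^*$ the pullback $\pi^*F$ lives in multi-bidegree $(\ll^j_1,\ll^j_2)_j$ with $s$-degree $r-e_\ll$ when restricted to the $S_\ll$-isotypic component. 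Requiring $s$-degree $\geq 0$ recovers $e_\ll\leq r$. By the inheritance principle, which applies because each $\ll^j$ has length at most $2$, I may reduce to $\dim V_j=2$ throughout, cutting the problem down to a computation of $\GL$-isotypes inside the polynomial rings in four variables per slot.

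\emph{Completing the decomposition.} In this reduced setting the image of $\pi^*$ in each multi-bidegree can be analyzed using the Cauchy decomposition $\K[V_j^*\oplus V_j^*]=\bigoplus_\mu S_\mu V_j\o S_\mu\K^2$ together with the plethysm expansion of $\Sym^r\Sym^{d_j}V_j$. The condition $e_\ll\geq 2f_\ll$ emerges as the combinatorial compatibility criterion that lets a highest weight vector for $S_\ll$ survive in the image of $\pi^*$: the quantity $f_\ll=\max_j\lceil\ll^j_2/d_j\rceil$ measures how much pressure the plethysm places on the single slot $j^\star$ attaining the maximum, and doubling it accounts for the $2\times 2$ antisymmetrizations needed to produce $S_\ll$ from the tangent data supplied by the remaining slots; when $e_\ll<2f_\ll$ there are too few tangent directions to supply these antisymmetrizations, and the corresponding isotypic copy falls into $I(\tau(X))$. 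The main obstacle and technical heart of the argument is making this combinatorial accounting fully rigorous, either by explicit construction of highest weight vectors as signed products of $2\times 2$ determinants, or by a direct character computation of the image of $\pi^*$.
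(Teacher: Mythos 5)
Your proposal departs substantially from the paper (which works entirely in the ``generic'' setting of Young tabloids and colored graphs, proves $\II=\II^{\mathrm{rich}}+\II^{<3}$, and counts multiplicities by reducing arbitrary MCB--graphs to a single type modulo rich graphs), but it has two genuine gaps. First, the stated description of $\K[\s_2(X)]_r$ is incorrect: by \cite[Theorem~4.1]{raiGSS} the condition $e_\ll\geq 2f_\ll$ is \emph{already} a necessary condition for $S_\ll$ to appear in $\K[\s_2(X)]_r$ (the paper explicitly uses this: ``If $e_\ll<2f_\ll$, then $S_\ll V$ does not occur in the coordinate ring of $\s_2(X)$\dots''), so it is not an ``extra'' constraint appearing only for the tangential variety, and --- more seriously --- the multiplicities in $\K[\s_2(X)]_r$ are \emph{not} uniformly equal to one; they count the number of distinct types of MCB--graphs of a given shape, which can exceed one. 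Consequently the restriction map $\K[\s_2(X)]_r\twoheadrightarrow\K[\tau(X)]_r$ alone does not give $m_\ll\leq 1$. The paper instead proves the upper bound by showing directly that all MCB--graphs of a given shape collapse, modulo the rich-graph ideal, to a single type (cases $e_\ll=r$ and $e_\ll<r$ in the proof of Theorem~\ref{thm:richgraphs}), which is precisely the collapse that distinguishes $\tau(X)$ from $\s_2(X)$.

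Second, the lower-bound step is not carried out: you describe a program (multidegree bookkeeping for $\pi^*$, Cauchy decomposition, plethysm, and ``a direct character computation'') but then explicitly concede that ``the main obstacle and technical heart of the argument is making this combinatorial accounting fully rigorous.'' This is exactly the nontrivial content of the theorem, and the intermediate claim that $\pi^*F$ lands in a single multi-bidegree $(\ll^j_1,\ll^j_2)_j$ with $s$-degree $r-e_\ll$ is not justified (the pullback of an isotypic component is not automatically bihomogeneous in the naive way). In the paper, this lower bound is supplied concretely by Lemma~\ref{lem:mlgeq1}, which exhibits a specific Young$^{\d}_r$ tabloid $T$ and a specific $\ul{a}\vdash r$ with $\pi_{\ul{a}}(T)\neq 0$, via the identity $\pi_{\ul{a}}(T)=2^m(-1)^m d_1^{r-e_\ll}\tilde{T}$. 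To salvage your approach one would at minimum need to (a) replace the $\s_2(X)$ upper bound with an independent argument controlling multiplicity, and (b) actually produce a nonvanishing highest weight vector in the image of $\pi^*$ whenever $2f_\ll\leq e_\ll\leq r$.
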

Theorem \ref{thm:maincoordring} generalizes the description of the coordinate ring of the tangential variety of a Segre variety (a special case of \cite[Theorem~5.2]{lan-wey}) and of that of a Veronese variety (see \cite[p.66]{eis} or \cite[Theorem~4.2]{lan-wey}). The next goal of our paper is to determine the generators of the ideal of $\tau(X)$. In the case when $X$ is a Segre variety, they constitute the object of the Landsberg--Weyman Conjecture:

\begin{conjecture}[{\cite[Conjecture~7.6]{lan-wey}}]\label{conj:lanwey} When $X$ is a Segre variety, $I(\tau(X))$ is generated by the submodules of quadrics which have at least four $\bigwedge^{2}$ factors, the cubics with four $S_{(2,1)}$ factors and all other factors $S_{(3)}$, the cubics with at least one $\bigwedge^3$ factor, and the quartics with three $S_{(2,2)}$'s and all other factors $S_{(4)}$.
\end{conjecture}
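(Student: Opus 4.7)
The plan is to derive Conjecture~\ref{conj:lanwey} from Theorem~\ref{thm:maincoordring} by (i) computing $I(\tau(X))$ as a $\GL$-module from the coordinate ring description, (ii) characterizing minimal generators via a Pieri-type analysis, and (iii) isolating the four listed families as the only possibilities.

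Specializing Theorem~\ref{thm:maincoordring} to the Segre case $\d=(1,\ldots,1)$, each $\ll^j$ is a partition of $r$, and a Schur module $M_{\ll}=S_{\ll^1}V_1\otimes\cdots\otimes S_{\ll^n}V_n$ appears in $\K[\tau(X)]_r$ with multiplicity $m_{\ll}\in\{0,1\}$, with $m_{\ll}=0$ precisely when (a) some $\ll^j$ has more than two parts, or (b) $e_{\ll}<2f_{\ll}$, or (c) $e_{\ll}>r$. On the other hand, Schur--Weyl duality gives the multiplicity of $M_{\ll}$ in $\Sym^r(V_1^{*}\otimes\cdots\otimes V_n^{*})$ as the multiplicity $g_{\ll}$ of the trivial $\SS_r$-representation in the Specht-module tensor product $S^{\ll^1}\otimes\cdots\otimes S^{\ll^n}$. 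Hence $I(\tau(X))_r=\bigoplus_{\ll}M_{\ll}^{\,g_{\ll}-m_{\ll}}$, and a copy of $M_{\ll}$ is a minimal generator exactly when it lies outside the image of the Pieri multiplication map $(V_1^{*}\otimes\cdots\otimes V_n^{*})\otimes I(\tau(X))_{r-1}\longrightarrow I(\tau(X))_r$.

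I would then run the minimality analysis case by case, according to which of (a), (b), (c) is responsible for $m_{\ll}=0$. Violations of (a) propagate under box-removal in sufficiently large degree, forcing the minimal cases to be cubics with at least one $\bigwedge^{3}$ factor. Violations of (c), combined with (b) and the nonvanishing of $g_{\ll}$, constrain the shapes tightly through sign/parity conditions on Specht modules; the only ``boundary'' configurations where no box-removal $\mu$ keeps $M_{\mu}$ in the ideal are precisely the quadrics with at least four $\bigwedge^{2}$'s (the smallest even count allowing $g_{\ll}\neq 0$ in degree $2$), the cubics with four $S_{(2,1)}$'s and the remaining factors $S_{(3)}$, and the quartics with three $S_{(2,2)}$'s and the remaining factors $S_{(4)}$. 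Every other $\ll$ with $g_{\ll}>m_{\ll}$ admits some box-removal $\mu$ with $g_{\mu}>m_{\mu}$, so that $M_{\ll}$ is hit by multiplication from $M_{\mu}$.

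The principal obstacle is the Pieri bookkeeping in this inductive step: one must show not only that for every $\ll$ outside the four boundary families some box-removal $\mu$ satisfies $g_{\mu}>m_{\mu}$, but also that the induced Pieri map actually covers the full multiplicity $g_{\ll}-m_{\ll}$ at the $M_{\ll}$-component. Tracking the generalized Kronecker coefficients $g_{\ll}$ under single-box removals demands careful $\SS_r$-representation theory, in particular the sign/parity constraints on tensor products of Specht modules; but these same constraints are what isolate the four boundary families as the only obstructions to propagation by Pieri, so the delicate computation simultaneously identifies the generators and closes the argument.
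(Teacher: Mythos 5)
Your high-level framing of the problem is correct: knowing $\K[\tau(X)]_r$ as a $\GL$-module (Theorem~\ref{thm:maincoordring}) and knowing the ambient plethysm $\Sym^r(\Sym^{\d}V)$, one recovers the multiplicities $g_{\ll}-m_{\ll}$ of $S_{\ll}V$ in $I(\tau(X))_r$, and a minimal generator is by definition a copy not in the image of multiplication by linear forms. But from there the proposal has a genuine gap: the image of the multiplication map
$I(\tau(X))_{r-1}\otimes\Sym^{\d}V\to I(\tau(X))_r$
is \emph{not} determined by the multiplicities $g_{\mu}-m_{\mu}$ of the source together with the Pieri/Littlewood--Richardson rule. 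You must also know which copies of $S_{\ll}V$ inside $\Sym^r(\Sym^{\d}V)$ actually land in the subspace $I_r$ after multiplying a copy of $S_{\mu}V\subset I_{r-1}$ by the degree-one part. That is information about the algebra structure of $\Sym(\Sym^{\d}V)$, not about characters. Your own statement of ``the principal obstacle'' treats this as bookkeeping, but it is the entire content of the proof; without a mechanism to track the maps, the fact that $g_\mu>m_\mu$ for some box-removal $\mu$ does not imply $S_\ll V$ is hit.

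The claimed mechanism --- ``sign/parity constraints on tensor products of Specht modules'' governing the Kronecker-type coefficients $g_{\ll}$ --- is not substantiated and is unlikely to exist in the strength you need: these coefficients are notoriously intractable, and there is no simple parity principle that isolates exactly the four Landsberg--Weyman families or that guarantees surjectivity of the multiplication maps elsewhere. The paper's actual route is quite different. It does not work with $\GL$-multiplicities at all in the hard direction: it passes to the ``generic'' category of Young tabloids and graphs (Sections~\ref{sec:generic}--\ref{subsec:graphs}), where the ideal membership question becomes a concrete combinatorial one about tabloids, and the key lever is Theorem~\ref{thm:idealdtab} (from \cite{rai-yng}), which gives an explicit criterion for a tabloid to lie in the ideal generated by lower-degree tabloids. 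This is precisely the control of the multiplication maps that your proposal lacks. The multiplicity computations you describe do appear in the paper, but only at the very end of Theorem~\ref{thm:mingens3} and Theorem~\ref{thm:mingens4}, and in the opposite direction: to verify that the explicitly constructed tabloids are nonzero, not to enumerate the possible generators. In short, your outline identifies the right target and the right characterization, but the central step --- deciding whether a given $S_\ll$-component of $I_r$ is in the subideal generated in lower degree --- is delegated to a claim that has no proof and, as stated, does not follow from representation-theoretic multiplicities alone.
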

We call the equations appearing in the above conjecture the \defi{Landsberg--Weyman equations}. While the language of modules provides an efficient way to describe sets of equations that are invariant under a group action, in order to practically use these equations, one needs an explicit realization. This is described in \cite[Section~3.2]{oedingJPAA}. The equations of degree $2$ and $3$ in Conjecture \ref{conj:lanwey} are built from minors of matrices of flattenings, which were the main players in the case of the secant variety of the Segre variety. The quartics however are the new interesting equations for the tangential variety, constructed out of Cayley's hyperdeterminant of a $2\times 2\times 2$ tensor \cite{GKZ}. The presence of these equations lead to an unexpected connection to the variety of principal minors of symmetric matrices (studied in~\cite{oedingANT}, \cite{hol-stu}), allowing the first author to prove that a subset of the Landsberg--Weyman equations define $\tau(X)$ set--theoretically~\cite{oedingJPAA}, a weaker version of Conjecture~\ref{conj:lanwey}. Our next result confirms the strong, ideal theoretic form of the Landsberg--Weyman conjecture, and generalizes it to the Segre--Veronese case:

\begin{ttmain}\label{thm:mainideal} Let $X$ be as in Theorem \ref{thm:maincoordring}. The ideal $I(\tau(X))$ is generated in degree at most $4$, and has minimal generators of degree $4$ if and only if the multiset $\{d_1,\cdots,d_n\}$ contains one of $\{3\}$, $\{2,1\}$ or $\{1,1,1\}$. It has minimal generators of degree $2$ if and only if $\sum_i d_i\geq 4$. $I(\tau(X))$ is generated by quadrics if and only if $X$ is a rational normal curve of degree at least $5$, or $X=SV_{d,1}(\bb{P}^1\times\bb{P}^r)$ with $d\geq 5$. When $X$ is the twisted cubic, it is known classically that $\t(X)$ is a rational quartic surface (see \cites{sri,atiyah,cat-waj} or \cite[Ex.~11.24]{ful-har}). In all other situations when $I(\tau(X))$ is non-zero, it has minimal generators of degree $3$. An explicit description of the minimal generators of $I(\tau(X))$ in all cases is given in Theorems~\ref{thm:mingens3} and~\ref{thm:mingens4}. In the case of a Segre variety the generators are as predicted by Landsberg and Weyman (Conjecture~\ref{conj:lanwey}).
\end{ttmain}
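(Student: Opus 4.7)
The plan is to combine Theorem~\ref{thm:maincoordring} with a Pieri-rule analysis of the multiplication maps in the ambient polynomial ring. Theorem~\ref{thm:maincoordring} gives the full $\GL$-decomposition of $\K[\tau(X)]_r$; subtracting from the known decomposition of the ambient ring yields an explicit formula for $I(\tau(X))_r$. Namely, a Schur module $S_{\ll^1}V_1\otimes\cdots\otimes S_{\ll^n}V_n$ sits inside $I(\tau(X))_r$ precisely when it occurs in the ambient $\Sym^r(\Sym^{d_1}V_1\otimes\cdots\otimes\Sym^{d_n}V_n)$ and one of the three ``failure conditions'' from Theorem~\ref{thm:maincoordring} holds: (S) some $\ll^j$ has more than two parts, (D) $e_{\ll} < 2 f_{\ll}$, or (E) $e_{\ll} > r$.

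I would first enumerate the representations satisfying one of (S), (D), (E) in degrees $r = 2, 3, 4$ and match them to explicit equations vanishing on $\tau(X)$. The quadrics come from (D) at $r = 2$ (producing $2 \times 2$ minors of flattenings); the cubics from (S) at $r = 3$ ($\bigwedge^3$-flattenings) together with the $S_{(2,1)}$-type cubics from (D); and the quartics from (D) at $r = 4$, which in the Segre case are the hyperdeterminant-type equations identified by Landsberg--Weyman \cite{lan-wey} and built from Cayley's $2\times 2\times 2$ hyperdeterminant \cite{GKZ}. Explicit models of these generators are the content of Theorems~\ref{thm:mingens3} and~\ref{thm:mingens4}; verifying that each explicit generator vanishes on $\tau(X)$ is a direct computation, and for the Segre case set-theoretic vanishing is already available from \cite{oedingJPAA}, which together with the $\GL$-character count from Theorem~\ref{thm:maincoordring} promotes to ideal-theoretic membership.

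The heart of the argument is to show the multiplication map
\[
\mu_r : W \otimes I(\tau(X))_{r-1} \longrightarrow I(\tau(X))_r, \qquad W = \Sym^{d_1}V_1 \otimes \cdots \otimes \Sym^{d_n}V_n,
\]
is surjective for all $r \geq 5$. By Pieri's rule this reduces to the assertion: for each partition tuple $\ll$ at level $r$ satisfying (S), (D), or (E), exhibit $\mu$ at level $r-1$ in the ideal together with an admissible Pieri move carrying $\mu$ to $\ll$. The natural move is to strip a horizontal $d_j$-strip from the first row of some $\ll^j$; this decreases $r$ by $1$ and, generically, preserves the active failure condition. The main obstacle will be the borderline overlap among the three conditions: for instance cases where (D) is barely satisfied (so that stripping risks destroying the $e_\ll < 2f_\ll$ inequality) or where (S) forces a very constrained shape from which no admissible strip exists without leaving the ideal. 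The argument will require a case analysis, occasionally playing one failure condition against another to produce an alternative precursor $\mu$ in a different failure class.

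The same case analysis, specialized to the explicit lower-degree lists instead of an abstract ideal component, pins down when quadrics, cubics, and quartics are minimal rather than redundant, yielding the ``if and only if'' clauses in the statement; the degenerate twisted-cubic case is dispatched by the classical description of $\t(X)$ as a rational quartic surface cited in the statement.
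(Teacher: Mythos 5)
Your proposal has a genuine gap at the exact point the paper is forced to work hardest. You propose to show surjectivity of the multiplication map $\mu_r : W \otimes I(\tau(X))_{r-1} \to I(\tau(X))_r$ for $r \geq 5$ by a Pieri-rule argument: find some $\mu$ at level $r-1$ inside the ideal and a Pieri move carrying $\mu$ to $\ll$. But Pieri's rule only controls the $\GL$-isomorphism type of $W \otimes S_{\mu}V$; it says nothing about where the image of the multiplication map lands inside $\Sym^r(\Sym^{\d}V)$. The modules $S_{\ll}V$ occur with multiplicity greater than $1$ in the ambient ring in almost all the interesting cases — the paper's own Theorem~\ref{thm:mingens3} records $m_{\ll}(\KK_{1,2}) = 2$ for certain shapes, and its proof invokes $m_{\ll}(\SSS)=3$ at one point — so a representation-theoretic count cannot distinguish the copy of $S_{\ll}V$ that lies in $\K[\tau(X)]_r$ from the copy (or copies) in $I(\tau(X))_r$, nor can it tell you which linear combination of those copies the multiplication map actually hits. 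Knowing that a module of the right shape exists at level $r-1$ and can Pieri-grow to $\ll$ is thus insufficient to conclude that a particular generator of $I(\tau(X))_r$ is a product of lower-degree elements. This is precisely why the paper does not argue this way: the heart of its proof is the generic-algebra machinery of Section~\ref{sec:generic}, the graph/tabloid dictionary of Section~\ref{subsec:graphs}, and especially Theorem~\ref{thm:idealdtab} (imported from \cite{rai-yng}), which gives actual combinatorial control over ideal membership for tabloid generators, not just multiplicities. The authors explicitly flag this as the new ingredient relative to the secant-variety paper \cite{raiGSS}.

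Concretely, the paper first reduces $\II$ to $\II^{rich}+\II^{<3}$ (Theorem~\ref{thm:richgraphs}), then proves degree-$\leq 4$ generation via a sequence of graph manipulations (Lemmas~\ref{lem:star}, \ref{lem:distantcycles}, \ref{lem:doublecycle}) that push a $2$-cycle along a path toward another cycle using only Pl\"ucker relations, reducing any minimal rich subgraph to one on at most $4$ vertices; Proposition~\ref{prop:idealgraphs} (a corollary of Theorem~\ref{thm:idealdtab}) is what lifts these local relations to the whole graph. The classification of which quadrics, cubics, and quartics are minimal is then a further case analysis using Lemma~\ref{lem:three12s} and shuffling relations inside the generic algebra. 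None of this is recoverable from a character-level Pieri computation. Your enumeration of the failure conditions (S), (D), (E) in low degree is a fine way to see what the candidate equations are, and your appeal to \cite{oedingJPAA} for the set-theoretic result is relevant background, but the passage to ideal-theoretic generation — the actual content of the theorem — cannot be carried out at the level of characters and will need the paper's tabloid-ideal technology or something equivalent.
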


We record for future reference the decomposition into irreducible representations of the module of minimal generators of the ideal of the tangential variety to a Veronese variety (the highest weight vectors are described in Theorems~\ref{thm:mingens3} and~\ref{thm:mingens4}).

\begin{corollary*}
 Let $d\geq 2$, and let $X=SV_d(\bb{P}V^*)$ be a Veronese variety. The module $K_{1,q}(X)$ of minimal generators of $I(\tau(X))$ (first syzygies of $\K[\t(X)]$) of degree $(q+1)$ decomposes as
\[K_{1,q}(X)=\bigoplus_{\ll}(S_{\ll}V)^{\oplus m_{\ll}},\]
where $m_{\ll}\in\{0,1\}$, with $m_{\ll}=1$ precisely in the following cases:
\begin{enumerate}
 \item $q=1$: $\ll=(2d-k,k)$ for $4\leq k\leq d$, $k$ even.
 \item $q=2$: $\ll=(3d-4,2,2)$ for $d\geq 2$, $\ll=(4,4,1)$ when $d=3$, and $\ll=(6,6)$ when $d=4$.
 \item $q=3$: $\ll=(6,6)$ when $d=3$.
\end{enumerate}
\end{corollary*}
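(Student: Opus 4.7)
The corollary is a specialization of Theorems \ref{thm:maincoordring}, \ref{thm:mingens3}, and \ref{thm:mingens4} to the Veronese case $n=1$, $V_1 = V$, $d_1 = d$, and the plan is to treat the three cases $q=1,2,3$ separately. For quadratic generators ($q=1$), since $I(\tau(X))$ vanishes in degrees $\leq 1$, every quadric in the ideal is automatically a minimal generator, so $K_{1,1}(X) = I(\tau(X))_2$. Theorem \ref{thm:maincoordring} applied with $n=1$ and $r=2$ forces $\ll_2 \leq 2$ and $\ll$ to have at most two parts, and $\ll_2 \in \{0,2\}$ are the only values compatible with $\ll_2 \geq 2 \lceil \ll_2/d \rceil$; consequently
\[\K[\tau(X)]_2 \;=\; S_{(2d)}V \oplus S_{(2d-2,2)}V.\]
Combined with the classical plethysm
\[S^2(S^d V) \;=\; \bigoplus_{\substack{0 \leq k \leq d\\ k \text{ even}}} S_{(2d-k,k)}V,\]
this yields $I(\tau(X))_2 = \bigoplus_{k \text{ even},\, 4 \leq k \leq d} S_{(2d-k,k)}V$, which is case (1).

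For cubic and quartic generators ($q=2$ and $q=3$), the list is read off from Theorems \ref{thm:mingens3} and \ref{thm:mingens4} by restricting to a single factor. The multipartitions $(\ll^1,\ldots,\ll^n)$ there collapse to a single partition of $3d$ or $4d$, and the combinatorial patterns describing highest-weight vectors of generators specialize to a very short list. The surviving generic family produces the summand $(3d-4,2,2)$ in degree $3$ for every $d \geq 2$. The remaining possibilities are the exceptional $\ll = (4,4,1)$ for $d=3$ and $\ll = (6,6)$ for $d=4$ in degree $3$, and $\ll = (6,6)$ for $d=3$ in degree $4$; the latter matches the classical statement, recalled in Theorem \ref{thm:mainideal}, that $\tau(SV_3(\PP^1))$ is a rational quartic surface.

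The main obstacle is the case-by-case verification in the cubic and quartic step: for each exceptional partition one must check that it really occurs among the summands listed in Theorems \ref{thm:mingens3} and \ref{thm:mingens4}, and conversely that no other summand of those theorems survives the reduction to $n=1$. Once this bookkeeping is done, the enumeration is combinatorial and routine, and the multiplicity claim $m_\ll \in \{0,1\}$ is inherited directly from the corresponding multiplicity bound in the general theorems.
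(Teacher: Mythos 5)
Your proposal is correct and follows essentially the same route as the paper: the corollary has no separate proof there and is simply the specialization of Theorems~\ref{thm:mingens3} and~\ref{thm:mingens4} (for $q=2,3$) and of the $\KK_{1,1}$ description at the start of Section~\ref{sec:mingensI} (for $q=1$) to the single-factor case $n=1$. The only minor divergence is that you obtain the $q=1$ list by subtracting the coordinate ring (Theorem~\ref{thm:maincoordring}) from the plethysm $\Sym^2(\Sym^d V)$ rather than reading it directly off the formula $m_\ll(\KK_{1,1})=1$ iff $\sum_j \ll^j_2>2$ is even, but the two are equivalent and both give $4\leq k\leq d$, $k$ even.
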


For a projective variety $X$, the relationship between $\t(X)$ and $\s_2(X)$ can be described as follows, as a consequence of Fulton and Hansen's Connectedness Theorem and related to Zak's Theorem on Tangencies: the two varieties are either equal (the degenerate situation), or they both have the expected dimension and the tangential variety is a hypersurface in the secant variety (the typical situation). In this sense, all non-trivial Segre--Veronese varieties are typical except those associated to matrices or symmetric matrices \cite[Theorem~4.2]{abo12}. We can easily recover this fact as a consequence of our results: Theorem~\ref{thm:mainideal} shows that $I(\tau(X))$ has minimal generators of degree $2$ when $\sum_i d_i\geq 4$, and minimal generators of degree $4$ when $\sum_i d_i=3$; since $I(\s_2(X))$ only has minimal generators of degree $3$ \cite{raiGSS}, it follows that we are in the typical situation as long as $\sum_i d_i\geq 3$. The remaining cases $n=2,d_1=d_2=1$, resp. $n=1,d_1=2$, correspond to the spaces of matrices, resp. symmetric matrices, which are degenerate. Alternatively, one can compare $\tau(X)$ with $\s_2(X)$ by comparing the descriptions of their homogeneous coordinate rings from Theorem~\ref{thm:maincoordring} and \cite[Theorem~4.1]{raiGSS}.

The methods we employ in this paper are to apply the techniques of the second author, which were used to determine the ideal of the secant varieties to the Segre--Veronese varieties~\cite{raiGSS}. This represents a departure from how this ideal was studied in the past in that we study the ideal via combinatorial properties of representations of the symmetric group.  Since these techniques were successful in studying the ideal of the secant varieties to Segre--Veronese varieties, it is natural to expect that they would also apply to the tangential varieties, and this is what we show in the remainder of the paper. The main new ingredient is Theorem~\ref{thm:idealdtab}, which is the main application of the results in \cite{rai-yng}. This tool wasn't necessary in \cite{raiGSS} because of the simplicity of the equations coming from minors of flattenings, but we expect it to be relevant in other situations where the structure of the generating set of the ideal is more involved. In this paper we develop a more functorial setting, which allows us to make the ring operations more apparent in the generic setting (this was only implicit in~\cite{raiGSS}).

For any variety, the ideal of defining equations allows one to test membership on that variety. This problem for various types of varieties is addressed in depth in \cite{jmltensorbook}, and the tangential variety is discussed in Chapter~8 in particular. In \cite{oedingJPAA}, the first author pointed out applications of the tangential variety of an $n$--factor Segre where the equations allow one to answer the question of membership for the following sets: the set of tensors with border rank $2$ and rank $k\leq n$ (the secant variety is stratified by such tensors \cite{Bernardi}), a special Context--Specific Independence model, and a certain type of inverse eigenvalue problem.  Another recent instance of the tangential variety of a Segre variety is in \cite[\S~4]{Sturmfels-Zwiernik}, where the authors showed that after a non--linear change of coordinates to cumulant coordinates, the tangential variety becomes a toric variety, and they computed its ideal in the case of $5$ factors in cumulant coordinates.

We summarize the structure of the paper as follows. In Section~\ref{sec:eqnstau} we give an explicit description of the tangential variety $\tau(X)$ of a Segre--Veronese variety $X$, and characterize its equations in terms of linear algebra. In Section~\ref{sec:generic} we set up the generic case, introducing the generic coordinate ring of $X$. In Section \ref{subsec:eqnstangential} we describe a method to compute the generic equations and the generic coordinate ring of $\tau(X)$. In Section~\ref{subsec:tabloids} we define tabloids (which were called $n$--tableaux in \cite{raiGSS}), and in Section~\ref{subsec:graphs} we recall the language of graphs and its relation to tabloids from \cite{raiGSS}. In Section~\ref{subsec:MCB} we collect a series of results on the generic equations of $\s_2(X)$ that will be relevant to the study of $\tau(X)$. In Section~\ref{sec:graph-lanwey} we give a simple graphical description of the generic equations of $\tau(X)$ and deduce Theorem~\ref{thm:maincoordring}. Theorem~\ref{thm:mainideal} is a consequence of the very explicit results in Section~\ref{sec:mingensI}. In Section~\ref{subsec:Ileq4} we show that the ideal of the tangential variety is generated in degree at most $4$, and in Sections~\ref{subsec:mingens3} and~\ref{subsec:mingens4} we determine the tabloids that minimally generate the generic version of this ideal.

\subsection*{Notation} $\K$ will always denote a field of characteristic zero. We denote the set $\{1,2,\cdots,r\}$ by $[r]$. If $\mu=(\mu_1\geq\mu_2\geq\cdots)$ is a partition of $r$ (written $\mu\vdash r$) and $W$ a vector space, then $S_{\mu}W$ (resp. $[\mu]$) denotes the irreducible representation of the general linear group $GL(W)$ (resp. of the symmetric group $\SS_{r}$) corresponding to $\mu$. If $\mu=(r)$, then $S_{\mu}W$ is $\rm{Sym}^r(W)$ and $[\mu]$ is the trivial $\SS_{r}$--representation. If $\mu=(1^r)$, $S_{\mu}=\bigwedge^r$ is the exterior power functor. The $GL(W)$-- (resp. $\SS_r$--) representations $U$ that we consider decompose as $U = \bigoplus_{\mu} U_{\mu}$ where $U_{\mu}\simeq (S_{\mu}W)^{m_{\mu}}$ (resp. $U_{\mu}\simeq[\mu]^{m_{\mu}}$) is the \defi{$\mu$--isotypic component} of $U$. We make the analogous definitions when we work over products of general linear (resp. symmetric) groups, replacing partitions by $n$--tuples of partitions (called \defi{$n$--partitions} and denoted by $\vdash^n$). We will often work with a collection $V=(V_1,\cdots,V_n)$ of vector spaces. If $\N=(N_1,\cdots,N_n)$ and $\ll\vdash^n\N$, we write $S_{\ll}V$ for $S_{\ll^1}V_1\oo\cdots\oo S_{\ll^n}V_n$, and $\Sym^{\N}V$ for $\Sym^{N_1}V_1\oo\cdots\oo\Sym^{N_n}V_n$. For an introduction to the representation theory of general linear and symmetric groups, see~\cite{ful-har}. For a short account of the relevant facts needed in this paper, see~\cite[Section~2C]{raiGSS}. We will sometimes write $\ul{a}\vdash r$ for a partition of $r$, to mean an $n$--tuple $\ul{a}=(a_1,\cdots,a_n)$ with $a_1+\cdots+a_n=r$. We write $\1=(1,\cdots,1)$. For a graph $G$ we will write $V(G)$ for its set of vertices, and $E(G)$ for its set of edges. Even though our graphs will always be oriented, when we talk about paths or cycles we do not take into account orientation. If $P$ is a path or a cycle in a graph, we define the \defi{length} $l(P)$ of $P$ to be the number of edges in $P$.

\section{Equations of the tangential variety of a Segre--Veronese variety}\label{sec:eqnstau}

For the rest of the paper we fix a collection $\d=(d_1,\cdots,d_n)$ of positive integers. We consider an $n$--tuple $V=(V_1,\cdots,V_n)$ of vector spaces over a field of characteristic zero, and let $X$ denote the \defi{Segre--Veronese} variety obtained as the image of the embedding
\[SV_{\d}:\bb{P}V_1^*\times\cdots\times\bb{P}V_n^*\to\bb{P}(\Sym^{\d}V^*)\]
given by
\[([e_1],\cdots,[e_n])\mapsto[e_1^{d_1}\o\cdots\o e_n^{d_n}].\]
The cone $\widehat{\t(X)}$ over the tangential variety $\t(X)$ is the set of tensors obtained as
\[\lim_{t\to 0}\frac{(e_1+tf_1)^{d_1}\o\cdots\o(e_n+tf_n)^{d_n} - e_1^{d_1}\o\cdots\o e_n^{d_n}}{t}\]
\[=d_1\cdot (e_1^{d_1-1}f_1\o e_2^{d_2}\o \cdots\o e_n^{d_n}) + d_2\cdot (e_1^{d_1}\o e_2^{d_2-1}f_2\o \cdots\o e_n^{d_n}) + \cdots + d_n\cdot (e_1^{d_1}\o\cdots\o e_{n-1}^{d_{n-1}}\o e_n^{d_n-1}f_n),\]
where $e_i,f_i\in V_i$. Replacing $f_i$ by $f_i/d_i$, we see that $\widehat{\t(X)}$ is the image of the map
\[s=s(V):(V_1^*\times\cdots\times V_n^*)\times(V_1^*\times\cdots\times V_n^*)\lra \Sym^{\d} V^*,\]
\[(e_1,\cdots,e_n,f_1,\cdots,f_n)\lra\sum_{i=1}^n e_1^{d_1}\o\cdots\o e_i^{d_i-1}f_i\o\cdots\o e_n^{d_n}.\]
Here we write $s(V)$ when we want to emphasize the dependence on the collection $V$ of vector spaces. $s$ corresponds to a ring map
\[\pi=\pi(V):\Sym(\Sym^{\d}V)\lra (\Sym(V_1)\oo\cdots\oo\Sym(V_n))^{\o 2},\]
which acts on generators as follows. We write $\phi_i:\Sym^{d_i}V_i\to\Sym^{d_i-1}V_i\oo V_i$ for the dual of the natural multiplication map $\Sym^{d_i-1}V_i^*\oo V_i^*\to\Sym^{d_i}V_i^*$. For a collection of monomials $m_i\in\Sym^{d_i}V_i$, we write $\phi_i(m_i)=\sum_j u_{i,j}\oo v_{i,j}$. With this notation, we have
\[\pi(m_1\o\cdots\o m_n) = \sum_{i=1}^n \sum_j (m_1\o\cdots\o m_{i-1}\o u_{i,j}\o m_{i+1}\o \cdots\o m_n) \o (1\o\cdots\o 1\o v_{i,j}\o 1 \o \cdots\o 1).\]
In degree $r$, $\pi$ restricts to a map
\[\pi_r=\pi_r(V):\Sym^r(\Sym^{\d}V)\lra\bigoplus_{\ul{a}\vdash r}(\Sym^{r\d-\ul{a}}V)\o(\Sym^{\ul{a}}V).\]
We decompose $\pi_r$ as $\pi_r(V) = \bigoplus_{\ul{a}\vdash r}\pi_{\ul{a}}(V)$ where
\[\pi_{\ul{a}}(V):\Sym^r(\Sym^{\d}V)\lra(\Sym^{r\d-\ul{a}}V)\o(\Sym^{\ul{a}}V).\]
We will give a very explicit description of $\pi_{\ul{a}}(V)$ in (\ref{eqn:pia}) below. We write $\pi_{\ul{a}}(V)$ to distinguish this map from its generic version $\pi_{\ul{a}}$ which is introduced in Section \ref{subsec:eqnstangential}.

Let $m_j = \rm{dim}(V_j)$ and let $\mc{B}_j=\{x_{i,j}:i\in [m_j]\}$ be a basis for $V_j$. For an $n$--tuple $\ul{a}$ of positive integers, the vector space $\Sym^{\ul{a}}V$ has a basis $\mc{B}=\mc{B}_{\ul{a}}$ consisting of tensor products of monomials in the elements of the bases $\mc{B}_1,\cdots,\mc{B}_n$. We write this basis, suggestively, as
\[\mc{B} = \Sym^{a_1}\mc{B}_1\o\cdots\o\Sym^{a_n}\mc{B}_n.\]
We can index the elements of $\mc{B}$ by $n$--tuples $\a=(\a_1,\cdots,\a_n)$ of multisets $\a_i$ of size $a_i$ with entries in $\{1,\cdots,m_i=\rm{dim}(V_i)\}$, as follows. The $\a$--th element of the basis $\mc{B}$ is
\[z_{\a} = (\prod_{i_1\in\a_1}x_{i_1,1})\o\cdots\o(\prod_{i_n\in\a_n}x_{i_n,n}).\]

When $\ul{a}=\d$, we think of $z_{\a}$ as a linear form in $S=\Sym(\Sym^{\d}V)$, so that $S=\K[z_{\a}]$ is a polynomial ring in the variables $z_{\a}$. We identify each $z_{\a}$ with an $1\times n$ block with entries $\a_1,\cdots,\a_n$:
\[
 z_{\a} = \begin{array}{|c|c|c|c|}
 \hline
 \a_1 & \a_2 & \cdots & \a_n \\ \hline
 \end{array}.
\]
We represent a monomial $m = z_{\a^1}\cdots z_{\a^r}$ of degree $r$ as an $r\times n$ block $M$, whose rows correspond to the variables $z_{\a^i}$ in the way described above.
\[
 m \equiv M = \begin{array}{|c|c|c|c|}
 \hline
 \a^1_1 & \a^1_2 & \cdots & \a^1_n \\ \hline
 \a^2_1 & \a^2_2 & \cdots & \a^2_n \\ \hline
 \vdots & \vdots & \ddots & \vdots \\ \hline
 \a^r_1 & \a^r_2 & \cdots & \a^r_n \\ \hline
 \end{array}
\]
Note that the order of the rows is irrelevant, since the $z_{\a^i}$'s commute.

For $\ul{a}\vdash r$, we represent a monomial $z_{\b}\o z_{\c}$ in the target of the map $\pi_{\ul{a}}$ as a $2\times n$ block
\[
\begin{array}{|c|c|c|c|}
 \hline
 \b_1 & \b_2 & \cdots & \b_n \\ \hline
 \c_1 & \c_2 & \cdots & \c_n \\ \hline
 \end{array},
\]
where $\b_i$ are multisets of size $rd_i-a_i$ and $\c_i$ are multisets of size $a_i$ (the order of the rows is now important!). The map $\pi_{\ul{a}}$ can then be written in terms of blocks as
\begin{equation}\label{eqn:pia}
M\lra\sum_{\substack{A_1\sqcup\cdots\sqcup A_n = [r]\\|A_j| = a_j}}
 \begin{array}{|c|c|c|}
  \hline
 \cdots & \bigcup_{i\notin A_j}\a_j^i & \cdots\\ \hline
 \cdots & \bigcup_{i\in A_j}\a_j^i & \cdots\\ \hline
 \end{array}.
\end{equation}

\begin{example}\label{ex:piablock} Assume that $r=3$, $n=4$, $\d=\1$ (i.e. $X$ is a Segre variety), and $V_i$ are vector spaces of dimensions at least three. Let $\ul{a}\vdash r$ with $a_1=2$, $a_2=a_3=0$, $a_4=1$, and consider the monomial
\[m=z_{(1,1,2,3)}\cdot z_{(1,2,1,2)}\cdot z_{(2,3,2,1)}\in \Sym^3(V_1\o V_2\o V_3\o V_4).\]
We can represent it as the $3\times 4$ block
\[M = \begin{array}{|c|c|c|c|}
 \hline
1 & 1 & 2 & 3 \\ \hline
1 & 2 & 1 & 2 \\ \hline
2 & 3 & 2 & 1 \\ \hline
\end{array}.\]
The map $\pi_{\ul{a}}(V)$ sends
\[M\lra \begin{array}{|c|c|c|c|}
 \hline
2 & 1,2,3 & 1,2,2 & 2,3 \\ \hline
1,1 &  &  & 1 \\ \hline
\end{array}+\begin{array}{|c|c|c|c|}
 \hline
1 & 1,2,3 & 1,2,2 & 1,3 \\ \hline
1,2 &  &  & 2 \\ \hline
\end{array}+\begin{array}{|c|c|c|c|}
 \hline
1 & 1,2,3 & 1,2,2 & 1,2 \\ \hline
1,2 &  &  & 3 \\ \hline
\end{array}.
\]

\begin{example}\label{ex:piaVero}
 Assume now that $r=1$, $n=1$, $\d=(4)$ (i.e. $X$ is a Veronese variety), and $V_1$ is a vector space of dimension at least three. There is only one partition $\ul{a}\vdash 1$, so that $s_1^{\#}(V)=\pi_{\ul{a}}(V)$. Consider the linear form $z_{\{1,1,2,3\}}\in\Sym^1(\Sym^{4}(V_1))$, corresponding to the $1\times 1$ block
\[M=\begin{array}{|c|}
 \hline
1 , 1 , 2 , 3 \\ \hline
\end{array}.\]
The map $\pi_{\ul{a}}(V)$ sends
\[M\lra\begin{array}{|c|}
 \hline
1 , 1 , 2 \\ \hline
 3 \\ \hline
\end{array}+\begin{array}{|c|}
 \hline
1 , 1 , 3 \\ \hline
 2 \\ \hline
\end{array}+\begin{array}{|c|}
 \hline
1 , 2 , 3 \\ \hline
 1 \\ \hline
\end{array}.
\]
\end{example}

The above discussion implies the following

\begin{proposition}\label{prop:eqnstau}
 The equations of degree $r$ of the tangential variety $\tau(X)$ of the Segre--Veronese variety $X$ are given by the subspace $I_r(V)\subset\Sym^{r}(\Sym^{\d}V)$ defined as
\[I_r(V)=\bigcap_{\ul{a}\vdash r}\rm{Ker}(\pi_{\ul{a}}(V)).\]
\end{proposition}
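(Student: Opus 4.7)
The plan is to reduce the statement to an almost tautological observation about the bi-multigrading on the target of $\pi_r(V)$.

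First, I would invoke the definition of $\tau(X)$: by construction $\widehat{\tau(X)}$ is the image of the polynomial map $s(V)$, so a homogeneous polynomial $f$ of degree $r$ on $\Sym^{\d}V^*$ vanishes on $\tau(X)$ if and only if $f\circ s(V) = 0$. In ring-theoretic language, this says precisely that $I(\tau(X))_r = \rm{Ker}(\pi_r(V))$, since $\pi(V)$ is by definition the $\K$-algebra homomorphism dual to $s(V)$. The proposition therefore reduces to establishing
\[\rm{Ker}(\pi_r(V))=\bigcap_{\ul{a}\vdash r}\rm{Ker}(\pi_{\ul{a}}(V)),\]
which in turn will follow from a direct sum decomposition of $\pi_r(V)$ itself.

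Next, I would establish $\pi_r(V)=\bigoplus_{\ul{a}\vdash r}\pi_{\ul{a}}(V)$ by exploiting the bi-multigrading of the codomain. Endow the target ring $(\Sym(V_1)\o\cdots\o\Sym(V_n))^{\o 2}$ with its natural $\bb{Z}_{\geq 0}^{2n}$-multigrading, in which the $j$-th tensor factor $\Sym(V_j)$ of the first (resp. second) copy contributes to the $j$-th (resp. $(n+j)$-th) coordinate. Inspecting the defining formula for $s$, the $i$-th summand $e_1^{d_1}\o\cdots\o e_i^{d_i-1}f_i\o\cdots\o e_n^{d_n}$ is bihomogeneous of bidegree $(\d-\ul{e}_i;\,\ul{e}_i)$, where $\ul{e}_i$ denotes the $i$-th standard basis vector of $\bb{Z}^n$. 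Accordingly, for any linear form $z_{\a}\in\Sym^{\d}V$, the image $\pi(z_{\a})$ is a sum of $n$ bihomogeneous pieces, one of each bidegree $(\d-\ul{e}_i;\,\ul{e}_i)$. Multiplying $r$ such images together shows that $\pi_r(V)$ lands in
\[\bigoplus_{\ul{a}\vdash r}(\Sym^{r\d-\ul{a}}V)\o(\Sym^{\ul{a}}V),\]
with the $\ul{a}$-component recording for each of the $r$ linear factors which index $i\in[n]$ was selected; this is exactly the decomposition $\pi_r(V)=\bigoplus_{\ul{a}\vdash r}\pi_{\ul{a}}(V)$, and it recovers the block formula (\ref{eqn:pia}).

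Finally, the kernel of a map into a direct sum is the intersection of the kernels of its components, so $\rm{Ker}(\pi_r(V))=\bigcap_{\ul{a}\vdash r}\rm{Ker}(\pi_{\ul{a}}(V))=I_r(V)$, as claimed. I do not expect any real obstacle here: the only substantive content is the bidegree bookkeeping in the middle step, and every other step is a formal consequence of the definitions.
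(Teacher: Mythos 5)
Your proposal is correct and is essentially the paper's own argument: the paper states that "the above discussion implies" the proposition, and that discussion consists precisely of identifying $\widehat{\tau(X)}$ as the image of $s(V)$, taking $\pi(V)$ to be the corresponding ring map, and observing that $\pi_r(V)$ decomposes as $\bigoplus_{\ul{a}\vdash r}\pi_{\ul{a}}(V)$ so that $\rm{Ker}(\pi_r(V))$ is the intersection of the $\rm{Ker}(\pi_{\ul{a}}(V))$. Your explicit bidegree bookkeeping merely fills in the paper's unstated justification for the direct-sum decomposition of $\pi_r(V)$.
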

\end{example}

\section{The generic case}\label{sec:generic}

The material in this section is based on~\cite[Section~3B]{raiGSS}. We introduce a more functorial terminology inspired by \cites{snowden,sam-snowden,sam-snowden-tca} (see also \cite{rai-yng}). Let $\d=(d_1,\cdots,d_n)$ be a sequence of positive integers as before. For a nonnegative integer $r$ we write $\r$ for the $n$--tuple $(r_1,\cdots,r_n)$, where $r_j=rd_j$. We write $\SS_{\r}=\SS_{r_1}\times\cdots\times\SS_{r_n}$. We dentote by $Set^{\d}_r$ the category whose objects are $n$--tuples of sets $\A=(A_1,\cdots,A_n)$ with $|A_j|=r_j$, and morphisms are componentwise bijections. We write $\SS_{\A}$ for the product $\SS_{A_1}\times\cdots\times \SS_{A_n}$, where $\SS_{A_i}$ is the group of permutations of $A_i$ (alternatively, $\SS_{\A}=\Hom_{Set^{\d}_r}(\A,\A)$). We let $Set^{\d}=\bigcup_{r\geq 0}Set^{\d}_r$, and let $Vec$ denote the category of finite dimensional $\K$--vector spaces. Given any functor $\FF:Set^{\d}_r\to Vec$ and any $\A\in Set^{\d}_r$, $\FF(\A)=\FF_{\A}$ is a $\SS_{\A}$--representation. If 
$\ll\vdash^n\r$, 
we denote by $m_{\ll}(\FF)$ the \defi{multiplicity} of the irreducible $\SS_{\A}$--representation $[\ll]$ inside $\FF_{\A}$. We write $\FF_{\ll}$ for the subfunctor that assigns to $\A$ the $\ll$--isotypic component $(\FF_{\A})_{\ll}$ of $\FF_{\A}$ (note that $(\FF_{\ll})_{\A}=(\FF_{\A})_{\ll}$).

\begin{definition}[The generic coordinate ring of $\bb{P}(\Sym^{\d}V$)]\label{def:urd} 
Let $\SSS^{\d}:Set^{\d}\to Vec$ be the functor which assigns to an element $\A\in Set^{\d}_r$ the vector space $\SSS^{\d}_{\A}$ with basis consisting of monomials
\[z_{\a} = z_{\a^1}\cdots z_{\a^r},\]
in commuting variables $z_{\a^j}$, where $\a^j=(\a^j_1,\cdots,\a^j_n)$ are $n$--tuples of sets with $|\a^j_i|=d_i$, and for each $i$ we have $\a_i^1\cup\cdots\cup\a_i^r=A_i$. Alternatively, $\SSS^{\d}_{\A}$ has a basis consisting of $r\times n$ blocks $M$, where each column of $M$ yields a partition of the set $A_j$ into $r$ equal parts. Note that when $r=1$, $\SSS^{\d}_{\A}$ is $1$--dimensional and has a distinguished generator $z_{\A}$. When $A_j=[r_j]$ for all $j$, we write $\SSS^{\d}_{\r}$ instead of $\SSS^{\d}_{\A}$. As before, we identify two blocks if they differ by permutations of their rows. $\SSS^{\d}_{\r}$ is the \defi{generic version} of the representation $\Sym^{r}(\Sym^{\d}V)$ in the sense of \cite[Section~3B]{raiGSS}. We will write $\SSS$ instead of $\SSS^{\d}$. 
\begin{example}\label{ex:urd}
 For $n=2$, $\d=(2,1)$, $r=4$, $A_1=[8]$, $A_2=[4]$, a typical element of $\SSS_{\A}$ is
\[
\begin{array}{|c|c|}
 \hline
1 , 6 & 1 \\ \hline
2 , 8 & 4 \\ \hline
4 , 7 & 2 \\ \hline
3 , 5 & 3 \\ \hline
\end{array} = 
z_{\{1,6\},1}\cdot z_{\{2,8\},4}\cdot z_{\{4,7\},2}\cdot z_{\{3,5\},3} = 
z_{\{3,5\},3}\cdot z_{\{1,6\},1}\cdot z_{\{4,7\},2}\cdot z_{\{2,8\},4} =
\begin{array}{|c|c|}
 \hline
3 , 5 & 3 \\ \hline
1 , 6 & 1 \\ \hline
4 , 7 & 2 \\ \hline
2 , 8 & 4 \\ \hline
\end{array}
\]
\end{example}

There is a natural multiplication map $\mu_{\A,\B}:\SSS_{\A}\oo \SSS_{\B}\to \SSS_{\A\sqcup\B}$, where $\A\sqcup\B$ denotes the componentwise disjoint union $(A_1\sqcup B_1,\cdots,A_n\sqcup B_n)$. We will often write $\mu_{A,B}(x\oo y)$ simply as $x\cdot y$. We call $\SSS$ a \defi{generic algebra} (or simply an \defi{algebra} when no confusion is possible). An \defi{element of} $\SSS$ is an element $w\in \SSS_{\A}$ for some $\A\in Set^{\d}$. An \defi{ideal} $\II\subset\SSS$ is simply a subfunctor which is closed under multiplication by elements of $\SSS$, i.e. it has the property that $\II_{\A}\cdot \SSS_{\B}\subset \II_{\A\sqcup\B}$ for all $\A,\B\in Set^{\d}$. When $\mc{W}$ is a collection of elements of $\SSS$ we define the \defi{ideal $\II(\mc{W})$ generated by $\mc{W}$} to be the smallest ideal $\II$ that contains the elements of $\mc{W}$. The \defi{maximal ideal} $\m\subset \SSS$ is given by $\m_{\A}=\SSS_{\A}$ whenever $\A\in Set^{\d}_r$, $r>0$, and $\m_{\ul{\emptyset}}=0$ (where $\ul{\emptyset}$ is 
the unique element of $Set^{\d}_0$). Note that $\m$ is generated by $z_{\A}\in \SSS_{\A}$ for any $\A\in Set^{\d}_1$.
\end{definition}

\begin{example}[The generic ideal of the subspace variety, see also {\cite[Theorem~3.1]{lan-wey-secant}}]\label{ex:subspace}
 The ideal $\II^{<3}\subset\SSS$ generated by the isotypic components $\SSS_{\ll}$, where $\ll$ runs over the $n$--partitions with some $\ll^j$ having at least three parts is the generic version of the ideal of the \defi{subspace variety} obtained by taking the union of $SV_{\d}(\bb{P}W_1^*\times\cdots\times\bb{P}W_n^*)$ where the $W_i$'s run over all $2$--dimensional quotients of the $V_i$'s. If $\ll$ is an $n$--partition such that each $\ll^j$ has at most two parts, then $\II^{<3}_{\ll}=0$, otherwise $\II^{<3}_{\ll}=\SSS_{\ll}$.
\end{example}

\subsection{The generic equations of the tangential variety}\label{subsec:eqnstangential}

In this section we introduce a new (generic) algebra $\SSS'$ together with an algebra map $\pi:\SSS\to \SSS'$ which is the generic version of the map $\pi(V)$ defined in Section \ref{sec:eqnstau}.

\begin{definition}\label{def:ua}
 Let $\SSS':Set^{\d}\to Vec$ be the functor which assigns to an element $\A\in Set^{\d}_r$ the vector space $\SSS'_{\A}=\bigoplus_{\ul{a}\vdash r}\SSS'_{\A,\ul{a}}$ where $\SSS'_{\A,\ul{a}}$ is the vector space with basis consisting of expressions $z_{\b,\c}=z_{\b}\o z_{\c}$, where $\b=(\b_1,\cdots,\b_n)$, $\c=(\c_1,\cdots,\c_n)$, with $\b_j\cup\c_j = A_j$, $|\b_j|=rd_j-a_j$, $|\c_j|=a_j$. We can represent $m$ as a $2\times n$ block
\[M = 
\begin{array}{|c|c|c|c|}
 \hline
 \b_1 & \b_2 & \cdots & \b_n \\ \hline\hline
 \c_1 & \c_2 & \cdots & \c_n \\ \hline
 \end{array}.
\]
$\SSS'$ is itself a ring, where the multiplication is defined on blocks (and extended linearly) by
\[\begin{array}{|c|c|c|c|}
 \hline
 \b_1 & \b_2 & \cdots & \b_n \\ \hline\hline
 \c_1 & \c_2 & \cdots & \c_n \\ \hline
 \end{array}\cdot
\begin{array}{|c|c|c|c|}
 \hline
 \b'_1 & \b'_2 & \cdots & \b'_n \\ \hline\hline
 \c'_1 & \c'_2 & \cdots & \c'_n \\ \hline
 \end{array} =
\begin{array}{|c|c|c|c|}
 \hline
 \b_1 \cup \b_1' & \b_2 \cup \b_2' & \cdots & \b_n \cup \b_n' \\ \hline\hline
 \c_1 \cup \c_1' & \c_2 \cup \c_2' & \cdots & \c_n \cup \c_n' \\ \hline
 \end{array}.
\]
When $A_j=[r_j]$ for all $j$, we simply write $\SSS'_{\ul{a}}$ instead of $\SSS'_{\A,\ul{a}}$. $\SSS'_{\ul{a}}$ is the generic version of $\Sym^{r\d-\ul{a}}V\o\Sym^{\ul{a}}V$.
\end{definition}

\begin{definition}[Generic $\pi$]\label{def:genericss}
 For $\A\in Set^{\d}_r$ and $\ul{a}\vdash r$ we consider the map 
\[\pi_{\A,\ul{a}}:\SSS_{\A}\lra \SSS'_{\A,\ul{a}}\subset \SSS'_{\A}\]
defined on blocks according to the formula (\ref{eqn:pia}). The association $\A\mapsto\pi_{\A,\ul{a}}$ defines a natural transformation $\pi_{\ul{a}}$ between the functors $\SSS|_{Set^{\d}_r}$ and $\SSS'|_{Set^{\d}_r}$. Letting $\pi_r=\bigoplus_{\ul{a}\vdash r}\pi_{\ul{a}}$ and putting all the $\pi_r$'s together, we get a natural transformation $\pi=\bigoplus_{r\geq 0}\pi_r:\SSS\to\SSS'$ which is in fact an algebra map. We call the image of $\pi$ (which is a subalgebra of $\SSS'$) the \defi{generic coordinate ring of the tangential variety $\tau(X)$}.
\end{definition}

\begin{example}\label{ex:piageneric} Assume that $r=n=3$, $\d=(1,1,2)$, $A_1=A_2=[3]$, $A_3=[6]$, and let $\ul{a}=(2,0,1)\vdash r$. Consider the monomial
\[m=z_{1,1,\{3,6\}}\cdot z_{3,2,\{1,5\}}\cdot z_{2,3,\{2,4\}}\in\SSS_{\A}.\]
We can represent it as the $3\times 3$ block
\[M = \begin{array}{|c|c|c|}
 \hline
1 & 1 & 3 , 6 \\ \hline
3 & 2 & 1 , 5 \\ \hline
2 & 3 & 2 , 4 \\ \hline
\end{array}.\]
The map $\pi_{\ul{a}}$ sends $M$ to
\[
\begin{split}
&\begin{array}{|c|c|c|}
 \hline
2 & 1,2,3 &  1,2,3,5,6 \\ \hline
1,3 &  &  4 \\ \hline
\end{array}\ +\ \begin{array}{|c|c|c|}
 \hline
2 & 1,2,3 &  1,3,4,5,6 \\ \hline
1,3 &  &  2 \\ \hline
\end{array}\ +\ \begin{array}{|c|c|c|}
 \hline
3 & 1,2,3 &  1,2,3,4,6 \\ \hline
1,2 &  &  5 \\ \hline
\end{array}\ + \\
& \begin{array}{|c|c|c|}
 \hline
3 & 1,2,3 &  2,3,4,5,6 \\ \hline
1,2 &  &  1 \\ \hline
\end{array}\ +\ \begin{array}{|c|c|c|}
 \hline
1 & 1,2,3 & 1,2,3,4,5 \\ \hline
2,3 &  &  6 \\ \hline
\end{array}\ +\ \begin{array}{|c|c|c|}
 \hline
1 & 1,2,3 & 1,2,4,5,6 \\ \hline
2,3 &  &  3 \\ \hline
\end{array}.
\end{split}
\]
\end{example}

\begin{definition}[Generic equations of the tangential variety]\label{def:lanweyeqns}
 Let $\II=\rm{Ker}(\pi)$. For a positive integer $r$ we have
\[\II_r=\rm{Ker}(\pi_r)=\bigcap_{\ul{a}\vdash r}\rm{Ker}(\pi_{\ul{a}}).\]
$\II_r$ is the \defi{generic version} of $I_r(V)$ (see Prop.~\ref{prop:eqnstau}) in the sense of \cite[Prop.~3.27]{raiGSS}.
\end{definition}

\begin{example}[See also {\cite[Section~3C]{raiGSS}}]\label{ex:multiplicities}
 It follows from Schur--Weyl duality that the multiplicities $m_{\ll}(\SSS),m_{\ll}(\pi(\SSS))$ and $m_{\ll}(\II_r)$ coincide with the multiplicities of $S_{\ll}V$ inside $\Sym^r(\Sym^{\d}V),\K[\tau(X)]_r$ and $I_r(V)$ respectively (when the dimensions of the vector spaces $V_i$ are sufficiently large).
\end{example}


\subsection{Young tabloids \cite[Section~5]{rai-yng}}\label{subsec:tabloids} 

We caution the reader of a shift in terminology from \cite{raiGSS}: the Young tabloids in this article were called Young $n$--tableaux in \cite{raiGSS}. 

For a partition $\mu\vdash N$, $D_{\mu}=\{(x,y):1\leq y\leq \mu_x\}$ denotes the associated \defi{Young diagram}. A \defi{Young tableau $T$} of shape $\mu$ and entry set $A$ is a bijection $T:D_{\mu}\to A$. We represent Young tableaux (or more generally, an arbitrary function $T:D_{\ll}\to A$) pictorially as a collection of left-justified rows of boxes filled with entries of $A$, with $\ll_i$ boxes in the $i$-th row, as illustrated in the following example: for $A=\{a,b,c,d,e,f,g\}$ and $\mu=(4,2,1)$, we take
\begin{equation}\label{eq:tableau}
\Yvcentermath1 T=\young(cabg,ed,f). 
\end{equation}
We use matrix--style coordinates, so the example above has $T(1,3)=b$. If $\ll\vdash^n\ul{N}$ then $D_{\ll}=(D_{\ll^1},\cdots,D_{\ll^n})$. A \defi{Young $n$--tableau $T$} of shape $\ll$ and entry set $\A=(A_1,\cdots,A_n)$, denoted $T:D_{\ll}\to\A$, is an $n$--tuple $(T^1,\cdots,T^n)$, where each $T^i$ is a Young tableau of shape $\ll^i$ and entry set $A_i$. To any Young $n$--tableau $T$ we associate the \defi{Young symmetrizer} $\mfc_{\ll}(T)$ in the group algebra $\K[\SS_{\A}]$. The \defi{canonical Young $n$--tableau} $T_{\ll}$ of shape $\ll$ is defined by $T_{\ll}^i(x,y)=\ll^i_1+\cdots+\ll^i_{x-1}+y$.

\begin{definition}[Covariants]\label{def:covariants}
Consider a functor $\FF:Set^{\d}\to Vec$, a positive integer $r$, and an $n$--partition $\ll\vdash^n\r$. A choice of $\A\in Set^{\d}_r$ and of a Young $n$--tableau $T:D_{\ll}\to\A$ gives rise to a vector space $\mfc_{\ll}(T)\cdot (\FF_{\A})_{\ll}$ of dimension $m_{\ll}(\FF)$. We call this space a \defi{$\ll$--highest weight space} of $\FF$ and denote it by $\rm{hwt}_{\ll}(\FF)$. We call the elements of $\rm{hwt}_{\ll}(\FF)$ \defi{$\ll$--covariants} of $\FF$. Note that there are choices in the construction of the $\ll$--covariants of $\FF$, but the subfunctor of $\FF$ that they generate is $\FF_{\ll}$, which is independent of these choices.
\end{definition}

\begin{definition}[Young$^{\d}_r$ tabloids]\label{def:yngdtabs}
Consider a positive integer $r$ and an $n$--partition $\ll\vdash^n\r$. A \defi{Young$^{\d}_r$ tableau} $F:D_{\ll}\to[r]$ of shape $\ll$ is an $n$--tuple $(F^1,\cdots,F^n)$, where $F^i:D_{\ll^i}\to [r]$ is a function with the property that $|(F^i)^{-1}(j)|=d_i$ for all $j\in[r]$. If $T:D_{\ll}\to\A$ is a Young $n$--tableau then we write $T\circ F^{-1}$ for the collection $\a=(\a^i_j)$, where $\a^i_j=T^i((F^i)^{-1}(j))$. We define the \defi{Young$^{\d}_r$ tabloid} associated to $F$ to be the collection $[F]$ of covariants $t_F(\A,T)=\mfc_{\ll}(T)\cdot z_{T\circ F^{-1}}\in\SSS_{\A}$, for all choices of $\A\in Set^{\d}_r$ and all Young $n$--tableaux $T:D_{\ll}\to\A$. Note that replacing $F$ by $\s\circ F$ for $\s\in\SS_r$ permutes the $n$--tuples $\a^j=(\a^j_1,\cdots,\a^j_n)$, thus it preserves $z_{\a}$ because of the commutativity of the $z_{\a^j}$'s. It follows that $[F]=[\s\circ F]$ for all $\s\in\SS_r$. Note also that $\mfc_{\ll}(T)=\mfc_{\ll}(\tl{T})$ if for each $j$, $T^j$ and $\tl{T}^j$ differ by 
permutations of columns of the same size, thus $[F]=[\tl{F}]$ when $F$ and $\tl{F}$ differ by such permutations.
\end{definition}

\begin{definition}[Young$^{\d,\ul{a}}_r$ tabloids]\label{def:yngdatabs}
Similarly, we define a \defi{Young$^{\d,\ul{a}}_r$ tableau} $F':D_{\ll}\to\{1,2\}$ of shape $\ll$ to be an $n$--tuple $(F'^1,\cdots,F'^n)$, where $F'^i:D_{\ll^i}\to\{1,2\}$ is a function with the property that $|(F'^i)^{-1}(1)|=rd_i-a_i$ and $|(F'^i)^{-1}(2)|=a_i$ for all $i\in[n]$. We write $T\circ F'^{-1}$ for the pair of $n$--tuples $\b=(\b_1,\cdots,\b_n)$, $\c=(\c_1,\cdots,\c_n)$, defined by $\b_i=T^i((F'^i)^{-1}(1))$, $\c_i=T^i((F'^i)^{-1}(2))$, so $z_{T\circ F'^{-1}}=z_{\b,\c}$. We define as before the \defi{Young$^{\d,\ul{a}}_r$ tabloid} associated to $F'$ to be the collection $[F']$ of covariants $t'_{F'}(\A,T)=\mfc_{\ll}(T)\cdot z_{T\circ F'^{-1}}\in\SSS'_{\A}$, for all choices of $\A\in Set^{\d}_r$ and all Young $n$--tableaux $T:D_{\ll}\to\A$.
\end{definition}

We represent Young$^{\d}_r$ tabloids and Young$^{\d,\ul{a}}_r$ tabloids as $n$--tuples of tableaux as in (\ref{eq:tableau}) with the horizontal lines removed, separated by $\oo$. The use of the symbol $\oo$ is motivated by the natural identification of $\K[\SS_{\A}]$ with the tensor product $\K[\SS_{A_1}]\oo\cdots\oo\K[\SS_{A_n}]$. We will often identify a tabloid $[F]$ with its representative $t_F(\A,T)$. A relation between tabloids $[F_i]$ should be interpreted as a relation between the elements $t_{F_i}(\A,T)$ for a fixed choice of $\A$ and $T$. 

\begin{example}\label{ex:tableaux}
 With notation as in Example \ref{ex:piageneric}, we let $\ll\vdash^3(3,3,6)$ be the $3$--partition with $\ll^1=\ll^2=(2,1)$, $\ll^3=(4,2)$. Taking $T=T_{\ll}$, the canonical Young $n$--tableau, we get
\[\ytableausetup{boxsize=1.25em,tabloids,aligntableaux=center}
[F]=\ytableaushort{13, 2}\oo\ytableaushort{12, 3}\oo\ytableaushort{2313, 21}\longleftrightarrow t_F(\A,T) = \mfc_{\ll}(T)\cdot \begin{array}{|c|c|c|c|}
 \hline
1 & 1 & 3 , 6 \\ \hline
3 & 2 & 1 , 5 \\ \hline
2 & 3 & 2 , 4 \\ \hline
\end{array} = \mfc_{\ll}(T)\cdot M.
\]
The map $\pi_{\ul{a}}$ sends $[F]$ to
\[
\ytableausetup{boxsize=1.25em,tabloids,aligntableaux=center}
\begin{split}
 &\ytableaushort{21,2}\oo\ytableaushort{11,1}\oo\ytableaushort{1112,11}\ +\ \ytableaushort{21,2}\oo\ytableaushort{11,1}\oo\ytableaushort{1211,11}\ +\\
 &\ytableaushort{22,1}\oo\ytableaushort{11,1}\oo\ytableaushort{1111,21}\ +\ \ytableaushort{22,1}\oo\ytableaushort{11,1}\oo\ytableaushort{2111,11}\ +\\
 &\ytableaushort{12,2}\oo\ytableaushort{11,1}\oo\ytableaushort{1111,12}\ +\ \ytableaushort{12,2}\oo\ytableaushort{11,1}\oo\ytableaushort{1121,11}\\
\end{split}
\]
where the $6$ terms above match (in the order they are given) the $6$ terms in Example \ref{ex:piageneric}.
\end{example}

We introduce some notation in order to state the main technical result on ideals generated by Young tabloids. If $X=\{(x_i,y_i):i\in[d]\}$, $X'=\{(x'_i,y'_i):i\in[d]\}$, with $y_1\leq y_2\leq\cdots$, $y'_1\leq y'_2\leq\cdots$, we write $X'\preceq X$ if $y'_i\leq y_i$ for all $i\in[d]$, and $X'\prec X$ if $X'\preceq X$ and $y'_i<y_i$ for some $i$. If $\ul{X}=(X_1,\cdots,X_n)$, $\ul{X}'=(X_1',\cdots,X_n')$, we write $\ul{X}'\preceq\ul{X}$ if $X'_j\preceq X_j$ for all $j\in[n]$, and $\ul{X}'\prec\ul{X}$ if furthermore $X'_j\prec X_j$ for some $j\in[n]$. The typical situation where this notation is relevant for us is when $F,F':D_{\ll}\to[r]$ are Young$^{\d}_r$ tableaux, $\ul{X}=F^{-1}(t)$ for some $t\in [r]$ (i.e. $X_j=(F^j)^{-1}(t)$ for all $j\in[n]$) and $\ul{X}'=F'^{-1}(t)$.

\begin{theorem}[{\cite[Theorem~5.4]{rai-yng}}]\label{thm:idealdtab}
 Let $k\leq r$ be positive integers, let $\ll\vdash r\d$, $\mu\vdash k\d$ be $n$--partitions with $\mu\subset\ll$, and let $F:D_{\ll}\to[r]$ be a Young$^{\d}_r$ tableau with $F^{-1}([k])=D_{\mu}$. Denote by $\mc{F}$ the collection of Young$^{\d}_r$ tableaux $F':D_{\ll}\to [r]$ with the properties
\begin{enumerate}
 \item $F'^{-1}(t)\preceq F^{-1}(t)$ for all $t\in [k]$, with $F'^{-1}(t)\prec F^{-1}(t)$ for at least one $t\in [k]$.
 \item $F'^{-1}([k])=D_{\delta}$ for some $n$--partition $\delta\subset\ll$.
\end{enumerate}
Writing $F_0=F|_{D_{\mu}}$ we have
 \begin{equation}\label{eq:IF0Fpr}
 [F]\in\II([F_0])+\II([F']:F'\in\mc{F}).
 \end{equation}
If we write $\ol{F}$ for the restriction of $F$ to $F^{-1}([k])$ then
 \begin{equation}\label{eq:IF0Fprbar}
 [F]\in\II([F_0])+\II([\ol{F}']:F'\in\mc{F}).
 \end{equation}
\end{theorem}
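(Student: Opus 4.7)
The plan is to exploit the multiplicative structure of the generic algebra $\SSS$ together with Garnir--type relations for Young symmetrizers. Decompose $F = F_0\sqcup F_1$, where $F_0 = F|_{D_\mu}$ takes values in $[k]$ and $F_1: D_\ll\setminus D_\mu\to\{k+1,\dots,r\}$. For any $\A\in Set^{\d}_r$ and Young $n$--tableau $T:D_\ll\to\A$, the monomial $z_{T\circ F^{-1}}$ factors as $z_{T\circ F_0^{-1}}\cdot z_{T\circ F_1^{-1}}$, with the two factors living in $\SSS_{T(D_\mu)}$ and $\SSS_{T(D_\ll\setminus D_\mu)}$ respectively. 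By construction, $[F_0]\cdot z_{T\circ F_1^{-1}}\in \II([F_0])$, so the heart of the proof is to express the difference between $[F] = \mfc_\ll(T)\cdot z_{T\circ F^{-1}}$ and a suitable $\K$--multiple of $\mfc_\ll(T)\cdot([F_0]\cdot z_{T\circ F_1^{-1}})$ as a combination of covariants $[F']$ with $F'\in\mc{F}$.

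The key step is the Garnir--type analysis of the column antisymmetrizer $b_\ll(T)$ component of $\mfc_\ll(T) = b_\ll(T)\,a_\ll(T)$, expanded as a signed sum over permutations acting factor by factor on columns of $\ll$. Each summand either (a) preserves, within every column of $\ll$, the partition of column cells into those in $D_\mu$ and those in $D_\ll\setminus D_\mu$, or (b) swaps at least one cell across this boundary. The (a)--type summands aggregate, after applying $a_\ll(T)$, into an element of $\II([F_0])$ since the column antisymmetrizer restricted to each half essentially factors as $b_\mu(T|_{D_\mu})$ times the column antisymmetrizer of the complementary skew shape. Each (b)--type summand produces a new Young$^{\d}_r$ tableau in which some $[k]$--entries have been exchanged with $\{k+1,\dots,r\}$--entries inside a column; after applying $a_\ll(T)$ and using the tabloid equivalences $[F'] = [\s\circ F']$ for $\s\in\SS_r$ together with row/equal--size column invariance (Def.~\ref{def:yngdtabs}), we can normalize the resulting tableau so that its preimage of $[k]$ forms a Young sub--diagram $D_\delta$ of $\ll$ with $\delta\ne\mu$, giving condition (2). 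Condition (1), namely $F'^{-1}(t)\preceq F^{-1}(t)$ for all $t\in[k]$ with strict inequality for some $t$, follows because each (b)--swap displaces a $[k]$--entry into a column position of lower occupancy in $D_\mu$, and the subsequent normalization into a partition shape forces a strict leftward shift in the multiset of $y$--coordinates for the displaced entries. Iteration on the remaining tabloids terminates by the well--foundedness of $\preceq$, yielding (\ref{eq:IF0Fpr}).

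The refinement (\ref{eq:IF0Fprbar}) follows by applying the argument recursively to each $F'\in\mc{F}$: since $F'^{-1}([k]) = D_\delta$, we set $\bar{F}' = F'|_{D_\delta}$, and the same decomposition (with $\mu$ replaced by $\delta$ and $F_0$ by $\bar{F}'$) expresses $[F']$ as an element of $\II([\bar{F}'])$ modulo corrections coming from shapes strictly smaller than $\delta$. Termination is again ensured by the strict decrease of partition shapes in the containment order. The main obstacle will be making the Garnir--type exchange argument precise in the Young$^{\d}_r$ setting, where each entry appears $d_i$ times in the $i$--th factor and the shapes are $n$--partitions: one must simultaneously track how the column antisymmetrizers in each of the $n$ factors rearrange the multisets $F'^{-1}(t)$, verify that the normalized preimages $F'^{-1}([k])$ are genuine $n$--partitions rather than skew $n$--diagrams, and ensure that the $\preceq$ comparison holds componentwise after the combined action across all factors.
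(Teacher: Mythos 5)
This theorem is imported verbatim from~\cite{rai-yng}; the present paper cites it rather than proving it, so there is no internal proof to compare against. Judged on its own merits, your sketch identifies a plausible high-level strategy but leaves the essential technical step unjustified, and you acknowledge as much in the last sentence.

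The critical gap is the claim that the ``(a)-type'' summands of the column antisymmetrizer aggregate into an element of $\II([F_0])$ after composing with the row symmetrizer $a_\ll(T)$. Restricting the column stabilizer to permutations preserving the $D_\mu$ boundary does factor it as $b_\mu\cdot b_{\mathrm{skew}}$, but $a_\ll(T)$ is \emph{not} a product $a_\mu\cdot a_{\mathrm{skew}}$: rows of $\ll$ that protrude beyond $\mu$ mix cells of $D_\mu$ with cells of $D_\ll\setminus D_\mu$. Consequently, applying $a_\ll$ to the aggregated (a)-type expression does not produce something visibly proportional to $\mfc_\mu(T|_{D_\mu})\cdot z_{T\circ F_0^{-1}}$ times an $\SSS$-element, and hence membership in $\II([F_0])$ is not established. (This is exactly the ``product of Young symmetrizers'' difficulty that~\cite{rai-yng} is devoted to, which is why the generating-set statement is nontrivial.) A related problem: you use the convention $\mfc_\ll = b_\ll a_\ll$, but checking the sign rule of Lemma~\ref{lem:fundrels}(a) against the definition of $[F]$ shows the paper's convention must be $\mfc_\ll = a_\ll b_\ll$; with $b_\ll$ acting first on the monomial, the bookkeeping changes.

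Two further steps are asserted without argument. First, that the ``(b)-type'' terms can be normalized, via the shuffling relations, so that $F'^{-1}([k])$ is a genuine $n$--partition $D_\delta$ \emph{and} the $\preceq$ comparison of condition (1) holds. A boundary-crossing swap sends a $[k]$-entry from a cell of $D_\mu$ to a cell of $D_\ll\setminus D_\mu$ strictly below it in the same column; this changes the $x$-coordinate, not the $y$-coordinate used by $\preceq$. The ``strict leftward shift'' you claim is supposed to come from the straightening process, but the shuffling relation of Lemma~\ref{lem:fundrels}(b) generically produces terms in which entries move in both directions, so the monotonicity of the $y$-multiset under straightening needs an argument, not an appeal. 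Second, termination ``by well-foundedness of $\preceq$'' presupposes exactly the monotonicity you have not shown. Until the factorization issue with $a_\ll$ is resolved and the $\preceq$ behavior of the straightening algorithm is controlled, the inclusion~\eqref{eq:IF0Fpr} is not established by your argument.
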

Condition (1) can be restated simply by saying that when going from $F$ to $F'$, each entry of $F$ contained in $D_{\mu}$ either remains in the same column, or is moved to the left, the latter situation occurring for at least one such entry.

\subsection{Covariants associated to graphs}\label{subsec:graphs}

Let $r>0$ and $\ll\vdash^n\r$ be an $n$--partition such that each $\ll^j$ has at most two parts. 

\begin{definition}[Admissible graphs and tabloids]\label{def:ideal_graph}
An \defi{admissible graph $G$ of shape $\ll$} is an oriented graph $G$ with $r$ vertices, having $\ll^j_2$ edges of color $j$ for each $j=1,\cdots,n$, and such that to any vertex there are at most $d_j$ incident edges of color $j$. We say that a vertex of $G$ is \defi{$j$--saturated} if it has exactly $d_j$ incident edges of color $j$. Given an admissible graph $G$ of shape $\ll$, we construct a Young$^{\d}_r$ tableau $F:D_{\ll}\to[r]$ as follows. We choose an arbitrary labeling of the vertices of $G$ by the elements of $[r]$, and an enumeration $e^j_1,\cdots,e^j_{\ll^j}$ of the edges of $G$ of color $j$. For every edge $e^j_i=(x,y)$ we let $F^j(1,i)=x$, $F^j(2,i)=y$, so that the columns of $F^j$ correspond to the edges of color $j$. We define $[G]$ to be the tabloid $[F]$ associated to $F$. Note that $F$ is well-defined up to permuting the columns within each $F^j$, and up to a relabeling of the entries (replacing $F$ by $\s\circ F$), in particular $[F]$ is independent of the choices made. Given a tabloid $[
F]$, we can reverse the above construction to obtain an admissible graph $G$ of shape $\ll$ with the property that $[G]=[F]$. We will often identify graphs and Young tabloids when they are related by the construction described above. If $\mc{G}$ is a family of admissible graphs, we write $\II(\mc{G})$ for the ideal $\II([G]:G\in\mc{G})$ generated by the tabloids associated to the graphs in $\mc{G}$.
\end{definition}

\begin{example}\label{ex:graphtab}
 The graph below corresponds to the Young tabloid $[F]$ in Example \ref{ex:tableaux}:
\[
 \xymatrix{
& \ccircle{2} \ar@(ur,dr)@{..>} & \\
\ccircle{1} \ar[ur] \ar@{~>}[rr]<.5ex> & & \ccircle{3} \ar@{.>}[ll]<.5ex>\\
}
\]
where color $1$ corresponds to $\xymatrix{\ar[r]&}$, color $2$ to $\xymatrix{\ar@{~>}[r]&}$, and color $3$ to $\xymatrix{\ar@{..>}[r]&}$.
\end{example}

\begin{example}\label{ex:LW4graph}
Below are examples of generic Landsberg--Weyman equations (see Conjecture \ref{conj:lanwey}) expressed both as Young tabloids (see~\cite[\S~3.2]{oedingJPAA}) and as graphs for $n\leq 4$. Colors $1,2,3$ are as before, and color $4$ corresponds to $\xymatrix{\ar@{-->}[r]&}$.

\[\ytableausetup{boxsize=1.25em,tabloids,aligntableaux=center}
\ytableaushort{1,2}\oo\ytableaushort{1,2}\oo\ytableaushort{1,2}\oo\ytableaushort{1,2}
\quad\text{is associated to}\quad
\xymatrix@=25pt{
\ccircle{1} \ar[r]<-.9ex> \ar@{~>}[r]<-.3ex> \ar@{-->}[r]<.3ex> \ar@{.>}[r]<.9ex> & \ccircle{2} & \\
}
\]

\[
\ytableausetup{boxsize=1.25em,tabloids,aligntableaux=center}
\xymatrixrowsep{.3pc}
\xymatrix{
& \ccircle{1} \ar[dd]<-.5ex> \ar@{~>}[dd]<.5ex> \ar@{-->}[dr]<-.5ex> \ar@{.>}[dr]<.5ex> & & \\
{\ytableaushort{12,3}\oo\ytableaushort{12,3}\oo\ytableaushort{13,2}\oo\ytableaushort{13,2}\quad\text{is associated to}}& & \ccircle{2} & \\
&\ccircle{3}
}\]

\[
\ytableausetup{boxsize=1.25em,tabloids,aligntableaux=center}
\xymatrixrowsep{.3pc}
\xymatrix{
& \ccircle{1} \ar[dd]<-.5ex> \ar@{~>}[dd]<.5ex> \ar@{.>}[r]  & \ccircle{2} \ar[dd]<-.5ex> \ar@{~>}[dd]<.5ex> & \\
{\ytableaushort{12,34}\oo\ytableaushort{12,34}\oo\ytableaushort{13,24}\quad\text{is associated to}} & &  & \\
&\ccircle{3} \ar@{.>}[r] & \ccircle{4} & \\
}\]
\end{example}

From now on, unless specified otherwise, we assume that all $n$--partitions $\ll$ have the property that $\ll^j=(\ll^j_1\geq\ll^j_2)$ has at most two parts for every $j$. We recall the \defi{straightening laws} (or \defi{shuffling relations}) that Young tabloids satisfy \cite[Chapter~2]{weyman}:

\begin{lemma}[{\cite[Lemma~4.7]{raiGSS}}]\label{lem:fundrels} The following relations between Young tabloids hold (we suppress from the notation the parts of the Young tabloids that don't change, and only illustrate the relevant subtabloids)

(a) $\ytableausetup{boxsize=1.25em,tabloids,aligntableaux=center}\ytableaushort{x,y}=-\ytableaushort{y,x}$, in particular $\Yvcentermath1\ytableaushort{x,x}=0$.

(b) $\ytableausetup{boxsize=1.25em,tabloids,aligntableaux=center}\ytableaushort{xz,y}=\ytableaushort{xy,z}+\ytableaushort{zx,y}$.
\end{lemma}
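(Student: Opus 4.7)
I will verify both identities at the level of a fixed representative covariant $t_F(\A,T) = \mfc_{\ll}(T) \cdot z_{T \circ F^{-1}}$, for any chosen $\A \in Set^{\d}$ and Young $n$-tableau $T\colon D_{\ll}\to\A$; the tabloid identities then follow by definition. Since the manipulations in (a) and (b) affect only one component $\ll^j$, and since both $\mfc_{\ll}(T)$ and the monomial $z_{T\circ F^{-1}}$ factor across the $n$ factors, each claim reduces to a statement about ordinary (single-factor) Young tableaux and Young symmetrizers acting on the polynomial ring permuted by $\SS_{A_j}$.

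For (a), the key point is that $\mfc_{\ll}(T)$ has the column antisymmetrizer $b_T$ as a factor. If the cells containing $x$ and $y$ lie in the same column of $D_{\ll^j}$, then swapping $x$ and $y$ corresponds to acting on $z_{T\circ F^{-1}}$ by a transposition $\tau\in\SS_{A_j}$ that lies in the column stabilizer. Since $b_T\tau=-b_T$, we obtain $\mfc_{\ll}(T)\tau = -\mfc_{\ll}(T)$, giving the desired sign flip. When $x=y$, the monomial is $\tau$-fixed while the covariant must satisfy $t_F = -t_F$, so it vanishes.

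For (b), this is the Garnir relation applied to the three-cell subset $\{a,b,c\} = \{(1,1),(2,1),(1,2)\} \subset D_{\ll^j}$, whose size exceeds the length $2$ of the first column. Writing $s_{a,c}, s_{b,c}\in\SS_{A_j}$ for the transpositions swapping the $T^j$-values at the indicated pairs of cells, the Garnir element $g = 1 - s_{a,c} - s_{b,c}$ satisfies $\mfc_{\ll}(T)\cdot g = 0$ (the classical Garnir lemma: the column antisymmetrizer factor of $\mfc_{\ll}(T)$ is annihilated by any alternating sum over coset representatives of $\SS_{\{a,b\}}\times\SS_{\{c\}}$ in $\SS_{\{a,b,c\}}$). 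Applied to $z_{T\circ F^{-1}}$ with $F$ corresponding to $\ytableaushort{xz,y}$ in component $j$, the three summands of $g$ evaluate to the covariants associated with $\ytableaushort{xz,y}$, $\ytableaushort{zx,y}$, and $\ytableaushort{xy,z}$ respectively (since $s_{a,c}$ swaps the top-row entries $x$ and $z$, while $s_{b,c}$ swaps the bottom-left entry $y$ with the top-right entry $z$). The resulting vanishing sum $[F]-[\ytableaushort{zx,y}]-[\ytableaushort{xy,z}]=0$ is precisely identity (b).

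The only nontrivial step is the Garnir identity $\mfc_{\ll}(T)\cdot g=0$ in the form used above; this is a standard fact in the theory of Young symmetrizers and Specht modules (see \cite[Chapter~2]{weyman}) and lifts verbatim to the $n$-tableau setting because the product-of-factors structure of $\mfc_{\ll}(T)$ localizes each such relation to the single component $\ll^j$ in which it takes place.
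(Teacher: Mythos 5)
Your overall strategy is right and matches the standard route: reduce to a fixed covariant $t_F(\A,T)=\mfc_{\ll}(T)\cdot z_{T\circ F^{-1}}$, factor the action component by component, and express passage between the tabloids as right--multiplication of $\mfc_{\ll}(T)$ by a Garnir-type element. Your argument for (a) is correct (with the convention $\mfc_{\ll}(T)=a_T b_T$, which is what is needed for this step). For (b) your reduction to the group--algebra identity $\mfc_{\ll}(T)\cdot g=0$ with $g=1-s_{a,c}-s_{b,c}$ is exactly the right move, and this identity is indeed true and is what makes the straightening law work. The citation of \cite[Chapter~2]{weyman} for the straightening laws is appropriate, and is precisely the reference the paper uses.

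However, the parenthetical justification you give for $\mfc_{\ll}(T)\cdot g=0$ is wrong, and this is a genuine gap. You write that ``the column antisymmetrizer factor of $\mfc_{\ll}(T)$ is annihilated by any alternating sum over coset representatives of $\SS_{\{a,b\}}\times\SS_{\{c\}}$ in $\SS_{\{a,b,c\}}$.'' This is false: the relevant column factor is $(1-s_{a,b})$, and
\[(1-s_{a,b})\cdot g=(1-s_{a,b})(1-s_{a,c}-s_{b,c})=\sum_{\s\in\SS_{\{a,b,c\}}}\mathrm{sgn}(\s)\,\s,\]
which is nonzero. What kills this sum is the \emph{row symmetrizer} factor $a_T$, because $a$ and $c$ lie in the same row of $T^j$, so $a_T$ factors through $(1+s_{a,c})$, and $(1+s_{a,c})\sum_{\s\in\SS_{\{a,b,c\}}}\mathrm{sgn}(\s)\,\s=0$. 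So the vanishing requires the interplay of both factors of the Young symmetrizer, not the column antisymmetrizer alone. A second, smaller issue is that when the cell $c=(1,\cdot)$ carrying $z$ sits in a column of size two (so that there is also a cell $d=(2,\cdot)$), one must additionally account for the factor $(1-s_{c,d})$ of $b_T$ and the factor $(1+s_{b,d})$ of $a_T$; the identity $(1+s_{a,c})(1+s_{b,d})(1-s_{a,b})(1-s_{c,d})\,g=0$ then needs its own (straightforward but nontrivial) verification, which your argument does not address. Also note, for precision, that the ``classical Garnir lemma'' is usually phrased as $g\cdot e_T=0$ acting from the left on the polytabloid, not as $\mfc_{\ll}(T)\cdot g=0$ with $g$ on the right in the group algebra; these are related but not literally the same statement, so the appeal should be to the straightening laws in the form given in \cite[Chapter~2]{weyman} rather than to the textbook Garnir relation.
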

Interpreted in terms of graphs, part (a) says that reversing an arrow changes the sign of a graph (when viewed as a covariant of $\SSS$). Part (b) can be depicted, in the case when the column of $z$ has size one, as the truncated Pl\"ucker relation
\begin{equation}\label{eq:3plucker}
\begin{aligned}
 \xymatrix@=15pt{
& \ccircle{x} \ar[dr]\\
& & \ccircle{y}  \\
&\ccircle{z}
}
 \xymatrix@=15pt{
& \ccircle{x} \ar[dd]\\
\quad = \quad & & \ccircle{y}  \\
&\ccircle{z}
}
 \xymatrix@=15pt{
& \ccircle{x} \\
\quad + \quad & & \ccircle{y}  \\
&\ccircle{z}\ar[ur]
}
\end{aligned}
\end{equation}
When the column of $z$ has size two, say it is equal to $\ytableaushort{z,t}$, part (b) becomes the Pl\"ucker relation
\begin{equation}\label{eq:plucker}
\begin{aligned}
 \xymatrix@=10pt{
\ccircle{x} \ar[dd] & & \ccircle{z} \ar[dd] & \\
 & & & \quad = \quad \\
\ccircle{y} & & \ccircle{t} & \\
} 
 \xymatrix@=10pt{
\ccircle{x} \ar[rr] & & \ccircle{z} & \\
 & & & \quad + \quad \\
\ccircle{y} \ar[rr] & & \ccircle{t} &\\
} 
 \xymatrix@=10pt{
\ccircle{x} \ar[ddrr] & & \ccircle{z} \ar[ddll]\\
 & & \\
\ccircle{y} & & \ccircle{t}\\
} 
\end{aligned}
\end{equation}

\begin{remark}[see also the inductive step of the proof of {\cite[Prop.~4.17]{raiGSS}}]\label{rem:subgraphs} Given a graph $G$ it will often be convenient to focus on a subgraph $G'$ and the Pl\"ucker relations involving its edges. Some care is needed in order to lift the relations on $G'$ to relations on $G$. The only circumstance where a problem may occur is if we remove the edges in $E(G)\setminus E(G')$, write down a relation of type (\ref{eq:3plucker}) for $G'$, and then put the edges back into the graphs involved in the relation. If the edge $(x,y)$ in (\ref{eq:3plucker}) had color $j$ and $\ccirc{z}$ were a $j$--saturated vertex of $G$, then this procedure would express $[G]$ as a $[G_1]+[G_2]$ with $G_1,G_2$ not admissible. To fix this, note that the only way (\ref{eq:3plucker}) could be a valid relation on $G'$ is if when going from $G$ to $G'$ we removed some edges of color $j$ incident to $\ccirc{z}$. But then we can lift relation (\ref{eq:3plucker}) on $G'$ to a relation of type (\ref{eq:plucker})
 on $G$ that coincides with (\ref{eq:3plucker}) when we forget the edges in $E(G)\setminus E(G')$ (i.e. in (\ref{eq:plucker}) we use an edge $(z,t)\in E(G)\setminus E(G')$ of color $j$).
\end{remark}

The following is a consequence of Theorem \ref{thm:idealdtab}, restated in the language of graphs:
\begin{proposition}[{\cite[Proposition~5.7]{rai-yng}}]\label{prop:idealgraphs}
 If $G$ is an admissible graph of shape $\ll$ and $G'$ is a subgraph containing all the edges of $G$, then $[G]\in\II(G')$. More generally, fix a subset $V\subset V(G)$, $|V|=k$, and let $G'$ be the subgraph with vertex set $V$ containing all the edges of $G$ joining vertices in $V$. Let $\mc{G}$ be the collection of admissible graphs with $k$ vertices that contain $G'$ and have at most $\ll^j$ edges of color $j$ for every $j\in[n]$. We have $[G]\in\II(\mc{G})$.
\end{proposition}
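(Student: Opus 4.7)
The plan is to deduce both claims from Theorem~\ref{thm:idealdtab} via the graph--tableau dictionary of Definition~\ref{def:ideal_graph}. The simpler statement is essentially formal: if the subgraph $G'$ contains every edge of $G$, then $G'$ has the same edge set as $G$ and differs from it only by some isolated vertices $v_1,\dots,v_s$; multiplicativity in $\SSS$ then gives $[G]=[G']\cdot z_{A_{v_1}}\cdots z_{A_{v_s}}\in\II([G'])$. So I focus on the general statement.

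To reduce to Theorem~\ref{thm:idealdtab}, I would relabel the vertices of $G$ by $[r]$ so that $V$ corresponds to $[k]$ and build the Young$^{\d}_r$ tableau $F\colon D_{\ll}\to[r]$ with a column ordering tailored to align $F^{-1}([k])$ with a Young subshape. For each color $j$ I would order the size--$2$ columns of $F^j$ as (type AA) both endpoints in $[k]$, (type AB) exactly one endpoint in $[k]$, (type BB) neither; I orient each AB edge so that its $[k]$--endpoint lies in row~$1$, and list the size--$1$ columns with entry in $[k]$ (type A1) before those without. Setting $\mu_2^j$ equal to the number of AA columns of color $j$---i.e.\ the color--$j$ edges of $G'$---and $\mu_1^j=kd_j-\mu_2^j$, a direct check shows that in the favorable case when, for every color $j$, either no BB column is present or no A1 column is present, the $[k]$--entries of $F$ fill exactly the Young subshape $D_{\mu}$ and $F_0=F|_{D_{\mu}}$ corresponds under the dictionary to the graph $G'$. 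Theorem~\ref{thm:idealdtab} then yields $[G]=[F]\in\II([G'])+\II([\ol{F}']:F'\in\mc{F})$.

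I would next translate the tableaux $\ol{F}'$ back into graphs. The condition $F'^{-1}(t)\preceq F^{-1}(t)$ for $t\in[k]$ (strict for at least one $t$) says that the color--$j$ slots labelled by each vertex $t\in V$ lie weakly leftward in $D_{\ll^j}$ for $F'$ than for $F$; for admissible graphs this means each cross--edge between $V$ and its complement has either stayed put or become internal to $V$, and each unused slot at a $V$--vertex has either stayed put or been absorbed into a new internal edge. Consequently $\ol{F}'$ encodes an admissible graph on the $k$ vertices of $V$ which contains every edge of $G'$ and has at most $\ll^j$ edges of color $j$---precisely the graphs collected in $\mc{G}$---so $[G]\in\II(\mc{G})$.

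The main obstacle is the case when the column arrangement above is obstructed, namely when in some color $j$ the graph $G$ has both an edge disjoint from $V$ (a BB column) and a $V$--vertex with an unused color--$j$ slot (an A1 column). I would handle this with the straightening relation of Lemma~\ref{lem:fundrels}(b), whose graph form is the Pl\"ucker relation~\eqref{eq:3plucker} (lifted to the full graph via Remark~\ref{rem:subgraphs}): the offending pair can be rewritten as a sum of two tabloids in which the BB--edge has acquired the loose $[k]$--entry $z$ as one of its endpoints (yielding an AB--edge), while the displaced endpoint $x$ or $y$ fills the size--$1$ slot (now a B1 column). Applying this rewriting finitely many times, color by color, expresses $[G]$ as a sum of tabloids each satisfying the favorable hypothesis above, at which point the preceding paragraphs show that every summand lies in $\II(\mc{G})$, completing the proof.
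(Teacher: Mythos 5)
Your overall strategy is the one the paper intends: the paper does not prove this proposition itself but cites it from \cite{rai-yng} and remarks that it is ``a consequence of Theorem~\ref{thm:idealdtab}, restated in the language of graphs.'' Your plan of choosing a favorable column ordering, applying Theorem~\ref{thm:idealdtab}, and straightening away the $(BB,A1)$ obstructions is consistent with that remark. However, there is a real gap at the step where you translate the tableaux $\ol{F}'$ ($F'\in\mc{F}$) back into graphs.

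You assert that the condition $F'^{-1}(t)\preceq F^{-1}(t)$ means ``each cross--edge between $V$ and its complement has either stayed put or become internal to $V$, and each unused slot at a $V$--vertex has either stayed put or been absorbed into a new internal edge,'' and you conclude that $\ol{F}'$ contains every edge of $G'$. But this reasoning only tracks the $AB$ and $A1$ cells; it says nothing about what happens to the $AA$ columns, which encode the edges of $G'$. A priori the condition $\preceq$ only controls sorted column indices of $F'^{-1}(t)$ and does not by itself prevent entries in the AA columns from reshuffling among themselves so that the column $\{u,v\}$ of $G'$ is replaced by, say, $\{u,w\}$ and $\{v,w'\}$. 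The fact that this cannot happen is true, but it requires its own argument: one proves by induction on $c\le |AA_j|$ that column $c$ of $F'^j$ has the same pair of $V$--entries as column $c$ of $F^j$. For $c=1$ the two entries have their smallest column index equal to $1$ in $F$, hence also in $F'$, and they fill the size--$2$ column. For the inductive step, if $u$ sits in columns $c_1<\dots<c_l=c$ among the AA columns of $F^j$, the inductive hypothesis forces $y'_1=c_1,\dots,y'_{l-1}=c_{l-1}$ for $u$ in $F'$, and the columns strictly between $c_{l-1}$ and $c$ are (again by the hypothesis) already occupied by pairs of $V$--entries not containing $u$, leaving $y'_l=c$ as the only possibility. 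This column--by--column pinning of the $AA$ edges is what actually makes $\ol{F}'\supseteq G'$, and it is not a ``direct check'' in the sense your write--up suggests.

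A secondary, smaller imprecision: in the easy case where $G'$ contains all edges of $G$, the claimed identity $[G]=[G']\cdot z_{A_{v_1}}\cdots z_{A_{v_s}}$ is not literally true, because $\mfc_{\ll}(T)$ and $\mfc_{\mu}(T_0)$ differ by symmetrization over the extra row--$1$ cells; the correct statement is the ideal membership $[F]\in\II([F_0])$, which is precisely Theorem~\ref{thm:idealdtab} with $\mc{F}=\emptyset$ (and one does need to check, as you can, that $\mc{F}=\emptyset$ in this setup). Since the simple case is subsumed by the general one, this is a cosmetic issue, but the literal equality should not be asserted.
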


\subsection{MCB--graphs and graphs containing triangles}\label{subsec:MCB} In this section we recall some results from \cite{raiGSS} which can be rephrased (in the language of Definitions \ref{def:urd}, \ref{def:genericss} and \ref{def:ideal_graph}) by saying that the ideal of generic equations of the secant variety $\s_2(X)$ of a Segre--Veronese variety $X$ is generated by graphs containing triangles. Furthermore, in \cite{raiGSS} a description of the multiplicities of the irreducible representations appearing in the coordinate ring of $\s_2(X)$ is given in terms of certain graphs called \defi{MCB--graphs}. Since the tangential variety $\tau(X)$ is contained in the secant variety, the equations of the latter will also vanish on the former, and the coordinate ring of $\tau(X)$ will be a quotient of that of $\s_2(X)$. The MCB--graphs of distinct types are independent in (the generic version of) $\K[\s_2(X)]$, so our goal will be to identify the relations that they satisfy when passing to the quotient $\K[\tau(
X)]$.

\begin{definition}[MCB--graphs, {\cite[Section~4.2.3]{raiGSS}}] We say that an admissible graph $G$ (see Def. \ref{def:ideal_graph}) is \defi{maximally connected bipartite (MCB)} if it is either bipartite and connected,
or it is the union of a tree and a collection of isolated nodes. The \defi{type} $(a\geq b)$ of $G$ is a pair of integers representing the sizes of the sets $A,B$ in the bipartition of its maximal connected component (note the slight difference in terminology from \cite{raiGSS}, where the information of the shape of $G$ was part of the definition of its type; most of the time we will avoid referring to the shape of $G$, as it will be understood from the context). $G$ is \defi{canonically oriented} if all edges have endpoints in the smaller set of the bipartition. If $a=b$ then there are two canonical orientations.
\end{definition}

\begin{example}\label{ex:tabtograph} The graph $G$ below is a canonically oriented MCB--graph of type $(3,3)$ and shape $((5,2),(6,1),(5,2))$. We picture both the graph and the (an) associated Young tabloid.
 \[
\ytableausetup{boxsize=1.25em,tabloids,aligntableaux=center}
\xymatrixrowsep{.3pc}
\xymatrix{
& \ccircle{1} \ar@{.>}[dr] & & & \\
& & \ccircle{2} & & \\
\ccircle{7} & \ccircle{3} \ar[ur] \ar@{~>}[dr] & &  & {\ytableaushort{35167,24}\oo\ytableaushort{312756,4}\oo\ytableaushort{15374,26}}\\
& & \ccircle{4} & & \\
& \ccircle{5} \ar[ur] \ar@{.>}[dr] & & & \\
& & \ccircle{6} & & \\
}
\]
The graph $G'$ obtained by reversing the directions of all arrows of $G$ is also a canonically oriented MCB--graph of the same type and shape as $G$. As a consequence of Proposition~\ref{prop:MCB}(4) below, $[G]=0$.
\end{example}

We collect a series of results on MCB--graphs that will be useful in the next section:

\begin{proposition}[{\cite[Section~4.2]{raiGSS}}]\label{prop:MCB} Fix a positive integer $r$ and let $\d,\r$ be as before. Consider an $n$--partition $\ll\vdash^n\r$ with each $\ll^i$ having at most two parts. With the usual identification of Young tabloids, graphs and covariants of $\SSS$, and letting $e_{\ll}=\sum_j\ll^j_2$ and $f_{\ll}=\max_j\lceil\ll_2^j/d_j\rceil$ (as in Theorem~\ref{thm:maincoordring}), we have: 
\begin{enumerate}
 \item There exists an MCB--graph of type $(a,b)$ and shape $\ll$ iff $b\geq f_{\ll}$ and $e_{\ll}\geq 2f_{\ll}-1$.

 \item $\rm{hwt}_{\ll}(\SSS)$ is spanned by MCB--graphs and graphs containing a triangle.

 \item Any two canonically oriented MCB--graphs of the same type and shape differ by a linear combination of graphs containing a triangle.

 \item An MCB--graph of type $(a,a)$ with an odd number of edges (such as the graphs in part (1) having $e_{\ll}=2f_{\ll}-1$) is a linear combination of graphs containing a triangle.

 \item Graphs containing odd cycles are linear combinations of graphs containing triangles.
\end{enumerate}
\end{proposition}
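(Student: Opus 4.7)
The plan is to prove each of the five parts using the Plücker/straightening relations of Lemma \ref{lem:fundrels}, together with graph-theoretic bookkeeping; since the statements are attributed to \cite{raiGSS}, I would package each as an application of the graph-tabloid dictionary from Sections \ref{subsec:tabloids}--\ref{subsec:graphs}. Part (1) is purely combinatorial: for an MCB-graph of type $(a,b)$ and shape $\ll$, each of the $b$ vertices in the smaller part supports at most $d_j$ color-$j$ edges while every color-$j$ edge has an endpoint there, giving $\ll^j_2 \leq b\,d_j$ for each $j$, hence $b \geq f_{\ll}$; connectivity on $a+b \geq 2f_{\ll}$ vertices forces $e_{\ll} \geq a+b-1 \geq 2f_{\ll}-1$. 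For sufficiency, I would construct an explicit example by taking $b=f_{\ll}$, distributing the $\ll^j_2$ edges of each color onto the $b$ vertices of the smaller side in round-robin fashion (the bound $\ll^j_2 \leq b d_j$ making this possible), and then linking everything into one connected component.

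Parts (5), (2), (4) then follow from Plücker manipulations, carried out in this order. For (5), I induct on the length of a shortest odd cycle $C$ in $G$: the base $|C|=3$ is trivial, and if $|C|=2k+1\geq 5$ then applying relation (\ref{eq:3plucker}) to an edge of $C$ whose moved endpoint lands on a non-adjacent vertex of $C$ splits $C$ into two shorter cycles, at least one of odd length, while saturation obstructions are resolved by the lift in Remark \ref{rem:subgraphs}. For (2), an admissible graph that is neither MCB nor contains a triangle must either be non-bipartite (handled by (5)) or bipartite with more than one nontrivial connected component; in the latter case a suitable application of (\ref{eq:3plucker}) across two components merges them modulo graphs containing triangles. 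Part (4) is then immediate from (3): the fully reversed graph $G'$ of a canonically oriented type-$(a,a)$ MCB-graph $G$ is itself canonically oriented of the same type and shape, so (3) gives $[G]-[G']$ in the ideal generated by triangle-containing graphs, while Lemma \ref{lem:fundrels}(a) yields $[G']=(-1)^{e_{\ll}}[G]$; for $e_{\ll}$ odd this forces $2[G]$, hence $[G]$, to lie in that ideal.

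The hard part will be (3). Given two canonically oriented MCB-graphs $G_1,G_2$ of the same type and shape, the plan is to transform $G_1$ into $G_2$ through Plücker moves whose remainders all contain triangles, inducting on the size of the symmetric difference $|E(G_1)\triangle E(G_2)|$. Concretely I would pick an edge $(u,v)\in E(G_1)\setminus E(G_2)$ of some color $j$, find a matching color-$j$ edge of $G_2$ to swap against, and apply (\ref{eq:3plucker}) or (\ref{eq:plucker}) to redirect an endpoint of $(u,v)$ toward the configuration in $G_2$; this expresses $[G_1]$ as a graph of strictly smaller symmetric difference with $G_2$ plus graphs containing short cycles. The main subtlety, and the reason (3) is the central technical step, is that each intermediate graph must remain admissible of the prescribed shape and stay MCB so the inductive hypothesis applies; whenever a move would violate this, Remark \ref{rem:subgraphs} upgrades (\ref{eq:3plucker}) to (\ref{eq:plucker}), and parts (1) and (5) identify the offending graphs as triangle sums. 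Iterating down to $E(G_1)\triangle E(G_2)=\emptyset$ yields $[G_1]-[G_2]$ in the triangle ideal, completing (3) and hence the proposition.
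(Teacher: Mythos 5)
The proposition you are asked to prove is not proved in this paper at all; it is stated with the citation $\text{\cite[Section~4.2]{raiGSS}}$ and used as a black box. So there is no in-paper proof to compare against, and your proposal has to be judged on its own merits as a reconstruction of the cited argument.

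Parts (1), (5), and the reduction of (4) to (3) are essentially right, and the last of these matches the hint the paper itself drops in Example~\ref{ex:tabtograph}: reversing all arrows of a type-$(a,a)$ canonically oriented MCB-graph $G$ gives another canonical orientation, so by (3) the difference $[G]-[G']$ lies in the triangle ideal, while Lemma~\ref{lem:fundrels}(a) gives $[G']=(-1)^{e_\ll}[G]$; for $e_\ll$ odd this forces $2[G]$, hence $[G]$, into that ideal. For (1) you correctly derive $b\geq f_\ll$ from the fact that every color-$j$ edge meets the smaller part $B$ and $j$-saturation bounds edges at each $B$-vertex by $d_j$, and $e_\ll\geq 2f_\ll-1$ from connectivity; you should, however, say something about the quantifier over $a$ (the sufficiency direction needs $a+b-1\leq e_\ll$ and $a+b\leq r$ to even make sense, and the round-robin construction must be checked to keep the graph bipartite-connected rather than merely a valid multi-set of edges). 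Part (2) is plausible but the reduction of a bipartite graph with several nontrivial components to MCB graphs plus triangle graphs is waved at rather than argued: a single application of (\ref{eq:3plucker}) across components produces two graphs with better connectivity, but neither need be MCB or contain a triangle, so you would need an induction on the number of nontrivial components combined with (5) to finish, and you do not say what the inductive measure is.

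The real gap is in (3), which you rightly flag as the crux. Induction on $|E(G_1)\triangle E(G_2)|$ is not obviously well-founded here: a Pl\"{u}cker move rewrites $[G_1]$ as $[G'_1]+[G''_1]$, and while one of these graphs may be ``closer'' to $G_2$, the other one typically is not, and there is no reason it should contain a triangle either. Moreover the move must keep the shape $\ll$ fixed and the graph admissible and bipartite-connected of the prescribed type for the inductive hypothesis to apply at all, and this is precisely where saturation obstructions live; invoking Remark~\ref{rem:subgraphs} does not by itself guarantee that the byproduct graph is triangle-containing or MCB of the same type. What is needed (and what I believe the cited reference actually does) is a reduction of each canonically oriented MCB-graph to a single canonical representative of its type and shape --- for instance a caterpillar/star-like tree with a prescribed distribution of colors --- showing at each step that the discarded terms contain odd cycles and hence reduce via (5). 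Your sketch does not identify such a canonical form, nor a monotone complexity measure that strictly decreases under the moves you propose while keeping you inside the class of admissible MCB-graphs of fixed type and shape. Until that is supplied, (3), and with it (4), is not established.
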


\section{A graphical description of the generic ideal of the tangential variety}\label{sec:graph-lanwey}

In this section we describe a collection of graphs that generate the ideal $\II\subset\SSS$ of generic equations of the tangential variety $\tau(X)$ of a Segre--Veronese variety $X$ (see Definition \ref{def:lanweyeqns}).

\begin{definition}\label{def:richgraph} We say that a graph $G$ is \defi{rich} if it has more edges than vertices. We write $\II^{rich}$ for the ideal of $\SSS$ generated by rich graphs.
\end{definition}

Our goal is to prove the following (recall the definition of $\II^{<3}$ from Example \ref{ex:subspace})

\begin{theorem}\label{thm:richgraphs}
 $\II=\II^{rich}+\II^{<3}$. Moreover, let $r>0$, $\ll\vdash^n\r$ and set
\[f_{\ll}=\max_{j=1,\cdots,n}\left\lceil\frac{\ll_2^j}{d_j}\right\rceil,\quad e_{\ll}=\ll^1_2+\cdots+\ll^n_2.\]
We have $m_{\ll}(\pi(\SSS))=0$ if some partition $\ll^j$ has more then two parts, or if $e_{\ll}<2f_{\ll}$, or if $e_{\ll}>r$, and $m_{\ll}(\pi(\SSS))=1$ otherwise.
\end{theorem}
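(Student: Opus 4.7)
The plan is to establish both parts of the theorem together: first I would prove the containment $\II^{<3}+\II^{rich}\subseteq\II$, then compute the multiplicity $m_\ll(\pi(\SSS))$ directly using Proposition~\ref{prop:MCB}, and finally recover the reverse containment $\II\subseteq\II^{<3}+\II^{rich}$ from a multiplicity count. The inclusion $\II^{<3}\subseteq\II$ is standard: every tangent vector lies in a $2$-dimensional subspace in each factor $V_j^*$, so $\tau(X)$ is contained in the subspace variety of Example~\ref{ex:subspace}. For $\II^{rich}\subseteq\II$ I would apply Pieri's rule to the target of $\pi_r$: the irreducible $S_{\ll^j}V_j$ appears in $\Sym^{rd_j-a_j}V_j\otimes\Sym^{a_j}V_j$ only when $\ll^j_2\leq\min(a_j,rd_j-a_j)$, so summing over $j$ and using $\sum_ja_j=r$ forces $e_\ll\leq r$. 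Whenever $e_\ll>r$, the $\ll$-isotypic component of $\bigoplus_{\ul{a}\vdash r}\Sym^{r\d-\ul{a}}V\otimes\Sym^{\ul{a}}V$ therefore vanishes, and the entire $(\SSS_r)_\ll$ lies in $\rm{Ker}(\pi_r)=\II_r$; in particular every rich tabloid is in $\II$.

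For the multiplicity calculation I fix $\ll$ with each $\ll^j$ of length at most $2$ and $e_\ll\leq r$, and use Proposition~\ref{prop:MCB}(2) to reduce $\rm{hwt}_\ll(\SSS)$ to MCB--graphs together with graphs containing a triangle; the latter lie in $\II(\s_2(X))\subseteq\II$. If $e_\ll<2f_\ll$, combining Proposition~\ref{prop:MCB}(1) with the elementary bound $e_\ll\geq a+b-1$ for a connected bipartite component of type $(a,b)$ forces every MCB--graph to have type $(f_\ll,f_\ll)$ with an odd number of edges, so Proposition~\ref{prop:MCB}(4) reduces it to triangle--containing graphs, giving $m_\ll(\pi(\SSS))=0$. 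In the good case $2f_\ll\leq e_\ll\leq r$ I would prove $m_\ll(\pi(\SSS))=1$ in two steps: for the upper bound I combine Proposition~\ref{prop:MCB}(3) (same-type MCB--graphs coincide modulo triangle--containing graphs) with the stronger statement that MCB--graphs of different types coincide modulo $\II^{rich}+\II^{<3}$; for the lower bound I would exhibit a canonical MCB--graph $G^*$ and directly compute, via the explicit formula (\ref{eqn:pia}), that a distinguished monomial in $\pi_{\ul{a}}([G^*])$ appears with a nonzero coefficient for a suitable $\ul{a}\vdash r$ with $a_j\geq\ll^j_2$ (such an $\ul{a}$ exists precisely because $e_\ll\leq r$).

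Combining these steps, for every $\ll$ we obtain $m_\ll(\SSS)-m_\ll(\pi(\SSS))=m_\ll(\II^{<3}+\II^{rich})$, which together with the first containment yields $\II=\II^{<3}+\II^{rich}$. The main obstacle is the upper bound in the good case: a priori the MCB--graphs of different types only need to coincide modulo the ambient ideal $\II$, but to conclude the ideal equality I must show they already coincide modulo the smaller ideal $\II^{rich}+\II^{<3}$. I would address this using Theorem~\ref{thm:idealdtab}: applied to a sub-tabloid $F_0$ capturing a shorter MCB spine, the theorem writes a given MCB--tabloid as $[F_0]$ plus contributions $[\ol{F}']$ from shifted tableaux $F'$, and a careful case analysis on the allowed shifts shows that each resulting sub-configuration either acquires a column of length $\geq 3$ in some $\ll^j$ (hence lies in $\II^{<3}$), becomes rich (hence lies in $\II^{rich}$), or reduces by induction on $r$ to one of these two alternatives, with $[F_0]$ itself handled by the inductive hypothesis.
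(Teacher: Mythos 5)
Your overall architecture is sound and matches the paper in most places: the inclusion $\II^{<3}\subseteq\II$ via the subspace variety, the vanishing of $m_{\ll}(\pi(\SSS))$ when $e_{\ll}<2f_{\ll}$ (via Proposition~\ref{prop:MCB}(1),(4)) or $e_{\ll}>r$, and the lower bound $m_{\ll}(\pi(\SSS))\geq 1$ in the good range via an explicit covariant with nonvanishing image under some $\pi_{\ul{a}}$ with $a_j\geq\ll_2^j$ --- that last step is exactly Lemma~\ref{lem:mlgeq1}. Your Pieri-rule argument for $\II^{rich}\subseteq\II$ is a pleasant alternative to the paper's direct tabloid computation (which just observes that when $e>r$ every summand of $\pi_{\ul{a}}([F])$ contains a repeated column, hence vanishes); both are valid, and the final dimension count does recover $\II=\II^{<3}+\II^{rich}$ once everything else is in place.

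The genuine gap is in the upper bound $m_{\ll}(\pi(\SSS))\leq 1$ for $2f_{\ll}\leq e_{\ll}\leq r$, i.e.\ the claim that $\rm{hwt}_{\ll}(\SSS)$ is spanned modulo $\II^{rich}$ by MCB--graphs of a single type. You propose to get this from Theorem~\ref{thm:idealdtab}, but that theorem is the wrong tool here: it produces an \emph{ideal-membership} statement $[F]\in\II([F_0])+\II([F']:F'\in\mc{F})$, relating a degree-$r$ tabloid to tabloids of \emph{lower} degree $k$ and to shifted tableaux of the same shape --- it is designed for bounding degrees of generators (and is indeed used that way in Section~\ref{sec:mingensI}), not for showing that two degree-$r$ covariants of the same shape $\ll$ are congruent modulo $\II^{rich}_{\ll}$. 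Your phrase ``writes a given MCB--tabloid as $[F_0]$ plus contributions $[\ol{F}']$'' misreads the theorem's conclusion, and the inductive structure you sketch (decreasing $r$, handling $[F_0]$ by the inductive hypothesis) does not give back a relation in the fixed degree $r$ where the spanning must be established. What the paper actually does in this step (Cases~\ref{case:e=r} and~\ref{case:e<r}) is a direct manipulation with the truncated Pl\"ucker relation (\ref{eq:3plucker}) and the Pl\"ucker relation (\ref{eq:plucker}): one locates a leaf $\ccirc{x}$ with incident edge $(x,y)$ of color $j$ and a non-$j$-saturated vertex $\ccirc{z}$ on the same side of the bipartition, then the three-term relation rewrites $[G]$ as an MCB--graph of a type closer to balanced plus a graph containing a rich subgraph; a degree count on $A$ guarantees such $\ccirc{x}$ and $\ccirc{z}$ exist. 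Induction on the type imbalance $a-b$, together with Proposition~\ref{prop:MCB}(3) to identify same-type MCB--graphs modulo triangles and Proposition~\ref{prop:MCB}(4) to kill the balanced type when $e_{\ll}<r$, finishes the spanning claim. Without this Pl\"ucker-relation argument (or an honest substitute), your proof of the upper bound does not go through.
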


\begin{remark} In terms of graphs, $r$ is the number of vertices, $e_{\ll}$ is the total number of edges, and $f_{\ll}$ is the maximum number of edges of a single color.
\end{remark}

It is an easy observation which we explain next that rich graphs lie in the kernel of $\pi$. Consider a rich graph $G$ and associate to it a Young$^{\d}_r$ tableau $F$ as in Definition \ref{def:ideal_graph}. $F$ has precisely $e=|E(G)|$ columns of size two, and $e>r=|V(G)|$. Consider an arbitrary partition $\ul{a}\vdash r$ and the associated map $\pi_{\ul{a}}$: we would like to prove that $\pi_{\ul{a}}([F])=0$. Note that $\pi_{\ul{a}}([F])$ is a sum of Young$^{\d,\ul{a}}_r$ tabloids $[F_i]$, where each $F_i$ has shape $\ll$ and entries in $\{1,2\}$, precisely $r$ of which are equal to $2$ (see Example \ref{ex:tableaux}). Since $r<e$, each $F_i$ contains a column equal to $\ytableaushort{1,1}$ and therefore $[F_i]=0$ by Lemma \ref{lem:fundrels}(a). The inclusion $\II^{<3}\subset\II$ is also immediate: if $F$ is a Young$^{\d}_r$ tabloid of shape $\ll$, with some $\ll^j$ having at least three parts, then $\pi([F])$ is a linear combination of tabloids $[F_i]$ of shape $\ll$ and entries in $\{1,2\}$. Any such $[F_
i]$ must then have repeated entries in any column of size at least $3$, so $[F_i]=0$ as before. Alternatively, $\II^{<3}\subset\II$ follows from the fact that the tangential variety $\t(X)$ is contained in the subspace variety described in Example \ref{ex:subspace}.

\begin{lemma}
If $G$ is a graph containing a rich subgraph then $[G]\in\II^{rich}$.
\end{lemma}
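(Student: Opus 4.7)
The plan is to reduce this to a direct application of Proposition~\ref{prop:idealgraphs}. Let $G$ be an admissible graph of some shape $\ll$ containing a rich subgraph $H$. First, I would replace $H$ by the induced subgraph on its vertex set: set $V = V(H) \subset V(G)$ and let $G'$ denote the subgraph of $G$ with vertex set $V$ containing all edges of $G$ joining vertices in $V$. Since $G'$ contains $H$ we have $|E(G')| \geq |E(H)| > |V| = |V(G')|$, so $G'$ is itself rich. This reduction lets us work with the ``canonical'' subgraph appearing in the statement of Proposition~\ref{prop:idealgraphs}.

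Next I would apply Proposition~\ref{prop:idealgraphs} with the subset $V \subset V(G)$ chosen above. This gives
\[[G] \in \II(\mc{G}),\]
where $\mc{G}$ is the collection of admissible graphs with $|V|$ vertices that contain $G'$ and have at most $\ll^j_2$ edges of color $j$ for every $j \in [n]$. Now I would observe that every $H' \in \mc{G}$ is automatically rich: it has exactly $|V|$ vertices and, since $H' \supseteq G'$, satisfies $|E(H')| \geq |E(G')| > |V|$. Consequently $[H'] \in \II^{rich}$ for every $H' \in \mc{G}$, and therefore $\II(\mc{G}) \subseteq \II^{rich}$, yielding $[G] \in \II^{rich}$ as desired.

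There is no real obstacle in this argument: the entire content is packaged into Proposition~\ref{prop:idealgraphs}, and the point is simply that richness is inherited by any admissible supergraph on the same vertex set. The only bookkeeping issue to be careful about is to pass from the (possibly non-induced) rich subgraph $H$ to the induced subgraph $G'$ on $V(H)$, which remains rich because adding edges within a fixed vertex set only increases the edge count.
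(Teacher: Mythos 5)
Your proof is correct and follows essentially the same approach as the paper's: both choose the vertex set $V$ supporting a rich subgraph, apply Proposition~\ref{prop:idealgraphs}, and observe that every graph in the resulting family $\mc{G}$ is rich. You merely spell out more explicitly why $G'$ and the members of $\mc{G}$ inherit richness.
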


\begin{proof}
 Let $V\subset V(G)$ be a vertex subset supporting a rich subgraph. With the notation of Proposition \ref{prop:idealgraphs}, all the graphs in the family $\mc{G}$ are rich, so the conclusion follows.
\end{proof}

By the discussion at the beginning of Section \ref{subsec:MCB}, graphs containing triangles provide equations for $\s_2(X)$ so they belong to $\II$. The next step in proving Theorem~\ref{thm:richgraphs} is then naturally the following

\begin{lemma}\label{lem:triangle}
 If $G$ contains a triangle then $[G]$ belongs to the ideal $\II(\II^{rich}_3)$ generated by the degree $3$ part of $\II^{rich}$.
\end{lemma}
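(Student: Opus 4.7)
The plan is a two-step reduction. First I use Proposition~\ref{prop:idealgraphs} to localize the triangle to its three vertices, and second I argue that the three-vertex triangle itself lies in $\II^{rich}_3$ by applying Plücker relations to its edges.

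For the first step, suppose $G$ contains a triangle on the vertex set $V=\{v_1,v_2,v_3\}$. Applying Proposition~\ref{prop:idealgraphs} with this $V$ gives $[G]\in\II(\mc{G})$, where $\mc{G}$ is the collection of admissible graphs on three vertices containing the triangle subgraph and having at most $\ll^j$ edges of color $j$. Any $G'\in\mc{G}$ with four or more edges satisfies $|E(G')|>|V(G')|=3$, so $G'$ is rich and $[G']\in\II^{rich}_3$. The only non-rich element of $\mc{G}$ is the triangle $T$ on three vertices, so it remains to prove that $[T]\in\II(\II^{rich}_3)$.

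For the second step, I would exploit the size-one columns (singletons) present in the shape $\ll$ of $T$. The key observation is that whenever $d_j\geq 2$ and the triangle is not entirely monochromatic of color $j$ with every slot saturated, the shape $\ll^j$ has singletons available. Using equation~\eqref{eq:3plucker} (equivalently Lemma~\ref{lem:fundrels}(b)), one can transport a triangle edge of color $j$ into such a singleton position; the resulting terms split naturally into three classes: (i) terms with a repeated entry in some column, which vanish by Lemma~\ref{lem:fundrels}(a); (ii) terms whose underlying graph on some pair of vertices acquires three parallel edges (counting the remaining triangle edges), hence contains a rich two-vertex subgraph and belongs to $\II^{rich}_3$ by Proposition~\ref{prop:idealgraphs}; and (iii) terms proportional to $[T]$ itself (possibly with a reversed orientation, the sign tracked by Lemma~\ref{lem:fundrels}(a)). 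Collecting these contributions and moving all $[T]$-terms to one side yields $c\cdot[T]\in\II(\II^{rich}_3)$ for some nonzero rational coefficient $c$, so $[T]\in\II(\II^{rich}_3)$. In the degenerate case where no singleton of an appropriate color exists (for instance, a monochromatic triangle of color $j$ with $d_j=2$, forcing shape $\ll^j=(3,3)$ with no singleton columns), the representation-theoretic condition $e_\ll<2f_\ll$ already yields $m_\ll(\SSS)=0$, so $[T]=0$ automatically.

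The main obstacle will be the second step: verifying that one can always arrange the Plücker reduction so that a genuinely rich two-vertex substructure emerges and the overall coefficient of $[T]$ is nonzero. This requires a case split according to the color pattern of the triangle (monochromatic, bichromatic, or trichromatic) and the values of the $d_j$'s. I would use Remark~\ref{rem:subgraphs} to handle edges incident to saturated vertices, and invoke Theorem~\ref{thm:idealdtab} to streamline the analysis by extracting a rich sub-tableau as the $F_0$ in that framework; the shuffling relations provided there should reduce the multitude of configurations to a short list of essentially distinct cases.
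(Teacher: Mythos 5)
Your localization step agrees with the paper's: applying Proposition~\ref{prop:idealgraphs} to the three triangle vertices produces a family $\mc{G}$ of three-vertex admissible graphs, and every $G'\in\mc{G}$ with at least four edges is rich, leaving only the bare triangle $T$ on three vertices (with whatever singleton columns the shape $\ll$ forces) to deal with. Up to that point the approaches coincide.

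The gap is in the second step, and it is substantive. The key fact that you miss is that the pure triangle tabloid is \emph{identically zero}: the paper does not place $[T]$ in $\II(\II^{rich}_3)$ at all, it shows $[T]=0$ as a covariant, by a short symmetry computation for each of the three essentially distinct colorings (one-color, two-color, three-color triangle). For example, for the one-color triangle $\ytableaushort{123,231}$ one transposes the labels $1\leftrightarrow2$, then swaps the two rows (three sign changes) and reorders columns, obtaining $T=-T$. For the two-color triangle a single relabeling together with a column transposition again gives $T=-T$. Only the three-color triangle requires a short Pl\"ucker manipulation, and that ends with $T=-2T$, again forcing $T=0$. Once one knows $[T]=0$, there is nothing left to do. Your proposed route — shuffling an edge into a singleton slot and tracking the contributions into classes (i)--(iii) to extract $c\cdot[T]\in\II(\II^{rich}_3)$ — is not carried out, and you yourself flag the nonvanishing of $c$ as the ``main obstacle''; in fact the relevant coefficient would have to be read off from a case analysis that is at least as involved as the paper's, without ever discovering that $[T]$ vanishes on the nose.

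There is also an error in your handling of the degenerate case. You assert that $e_\ll<2f_\ll$ ``already yields $m_\ll(\SSS)=0$.'' The inequality $e_\ll<2f_\ll$ in Theorem~\ref{thm:maincoordring} controls $m_\ll(\pi(\SSS))$, that is, the multiplicity in the coordinate ring of $\tau(X)$, which is a proper quotient of $\SSS$; it does not bound $m_\ll(\SSS)$ itself. In the specific example you cite (monochromatic triangle, $d_j=2$, shape $\ll^j=(3,3)$) the conclusion $m_\ll(\SSS)=0$ happens to be true — but one way to see it is to observe that $\rm{hwt}_\ll(\SSS)$ is then spanned by the triangle tabloid and to invoke precisely the vanishing $[T]=0$ that your argument is trying to deduce. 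As written, this part of your argument is circular (or at least relies on an unproved and generally false implication). The clean way out is the paper's: forget about locating $[T]$ in an ideal and simply prove that $[T]=0$.
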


\begin{proof} Let $V\subset V(G)$ be a set of three vertices that supports a triangle, and let $\mc{G}$ be the associated family of graphs as in Proposition \ref{prop:idealgraphs}. All the graphs in $\mc{G}$ have $3$ vertices and contain a triangle, so it suffices to show that if $G$ is a graph with three vertices containing a triangle, then $[G]\in\II^{rich}_3$. If $G$ has at least four edges then it is a rich graph, so we may assume that $G$ has precisely three edges, i.e. it is a triangle. We will show that in this case $[G]=0$. To do that we have to show that if $T$ is any Young$^{\d}_3$ tabloid containing one of
\begin{equation}\label{eq:triangles}
\ytableausetup{boxsize=1.25em,tabloids,aligntableaux=center} \ytableaushort{123,231},\quad \ytableaushort{1123,23}\oo\ytableaushort{21,3},\quad \ytableaushort{13,2}\oo\ytableaushort{21,3}\oo\ytableaushort{32,1}
\end{equation}
and possibly some other columns of size one, then $T=0$. Recall that $T$ is unchanged by permutations of the labels $1,2,3$, as well as by permutations of its columns, and it changes sign when applying a transposition within any of its columns (Lemma \ref{lem:fundrels}(a)). We get
\[\ytableausetup{boxsize=1.25em,tabloids,aligntableaux=center} \ytableaushort{123,231}\overset{1\leftrightarrow 2}{=}\ytableaushort{213,132}\overset{\substack{\text{swap}\\ \text{rows}}}{=}(-1)^3\cdot \ytableaushort{132,213}\overset{\substack{\text{swap last}\\ \text{two cols}}}{=}-\ytableaushort{123,231}\ ,\]
\[\ytableausetup{boxsize=1.25em,tabloids,aligntableaux=center} \ytableaushort{1123,23}\oo\ytableaushort{21,3}\overset{2\leftrightarrow 3}{=}\ytableaushort{1132,32}\oo\ytableaushort{31,2}=-\ytableaushort{1132,32}\oo\ytableaushort{21,3}\]
\[=-\ytableaushort{1123,23}\oo\ytableaushort{21,3},\]
showing that the first two Young tabloids in (\ref{eq:triangles}) satisfy $T=-T$, i.e. $T=0$. For the last one
\[\ytableausetup{boxsize=1.25em,tabloids,aligntableaux=center}\ytableaushort{13,2}\oo\ytableaushort{21,3}\oo\ytableaushort{32,1}\overset{\text{Lemma~}\ref{lem:fundrels}}{=}\ytableaushort{13,2}\oo\left(\ytableaushort{12,3}+\ytableaushort{23,1}\right)\oo\left(\ytableaushort{23,1}+\ytableaushort{31,2}\right)\]
\[\ytableausetup{boxsize=1.25em,tabloids,aligntableaux=center} = \ytableaushort{13,2}\oo\ytableaushort{12,3}\oo\ytableaushort{23,1}+\ytableaushort{13,2}\oo\ytableaushort{12,3}\oo\ytableaushort{31,2}\]
\[\ +\ \ytableaushort{13,2}\oo\ytableaushort{23,1}\oo\ytableaushort{23,1}+\ytableaushort{13,2}\oo\ytableaushort{23,1}\oo\ytableaushort{31,2}.\]
We write $A,B,C,D$ (in this order) for the four Young tabloids above. We have $B=-T$ ($1\leftrightarrow 2$), and $C=-C$ ($1\leftrightarrow 2$), so $C=0$. Applying Lemma~\ref{lem:fundrels}(b) to the first factor of $D$ we get
\[\ytableausetup{boxsize=1.25em,tabloids,aligntableaux=center} D = \ytableaushort{31,2}\oo\ytableaushort{23,1}\oo\ytableaushort{31,2} + \ytableaushort{12,3}\oo\ytableaushort{23,1}\oo\ytableaushort{31,2}= -A - T
\]
where the last equality follows by applying the permutation $1\to 3\to 2\to 1$ to the first tabloid, and $1\to 2\to 3\to 1$ to the second. Putting everything together, it follows that $T = A+B+C+D = A + (- T) + 0 + (-A - T)$, i.e. $T=-2T$ and thus $T=0$. 
\end{proof}

Before proving Theorem \ref{thm:richgraphs}, we need some more terminology: recall that if $G$ is a graph as in Definition \ref{def:ideal_graph}, $\ccirc{x}\in V(G)$, and $j\in[r]$, we say that $\ccirc{x}$ is \defi{$j$--saturated} if it has $d_j$ incident edges of color $j$. The \defi{degree} of a node $\ccirc{x}$ is the number of incident edges. A node of degree one is called a \defi{leaf}.

\begin{proof}[Proof of Theorem \ref{thm:richgraphs}] Fix $r$ and $\ll\vdash^n\r$. Since $\II^{<3}\subset\II$, we may assume that all $\ll^j$ have at most two parts. We write $\pi_{\ll}$ for the restriction of $\pi$ to a map $\rm{hwt}_{\ll}(\SSS)\to \rm{hwt}_{\ll}(\SSS')$. We show that if $e_{\ll}<2 f_{\ll}$ or $e_{\ll}>r$, then $\pi_{\ll}(\rm{hwt}_{\ll}(\SSS))=0$. When $2f_{\ll}\leq e_{\ll}\leq r$, we construct in Lemma \ref{lem:mlgeq1} a graph $G$ of shape $\ll$ with $\pi_{\ll}([G])\neq 0$ from which we deduce that $m_{\ll}(\pi(\SSS))\geq 1$. Moreover, we show that $\rm{hwt}_{\ll}(\SSS)$ is spanned by graphs contained in $\II^{rich}$ together with MCB--graphs of a single type, which implies that $m_{\ll}(\pi(\SSS))\leq 1$ (recall that by Proposition \ref{prop:MCB}(3) and Lemma \ref{lem:triangle} any two MCB--graphs of the same type span the same vector space modulo $\II^{rich}$). Since $\II^{rich}\subset\II$ we obtain that $\II_{\ll}=\II^{rich}_{\ll}$ and $m_\ll(\pi(\SSS))=1$, as desired.

If $e_{\ll}<2f_{\ll}$, then $S_{\ll}V$ does not occur in the coordinate ring of $\s_2(X)$ \cite[Theorem~4.1]{raiGSS}, so it can't occur in the coordinate ring of $\tau(X)$ either, therefore $\pi(\SSS)_{\ll}=0$ (see Example \ref{ex:multiplicities}). If $e_{\ll}>r$ then any graph $G$ of shape $\ll$ is rich, so $\II_{\ll}=\SSS_{\ll}$ and $\pi(\SSS)_{\ll}=0$ again. We may then assume that $2f_{\ll}\leq e_{\ll}\leq r$. By Lemma \ref{lem:mlgeq1} below, $m_{\ll}(\pi(\SSS))\geq 1$ in this case. We show that MCB--graphs of a single type suffice to generate $\rm{hwt}_{\ll}(\SSS_{\A})$ modulo $\II^{rich}$. We distinguish two cases.

\subsection{Case $e_{\ll}=r$.}\label{case:e=r}  We show that any MCB--graph is (up to a factor) congruent modulo $\II^{rich}$ with one of type $(a,b)$ with $b\leq a\leq b+1$. Once this is shown it follows that $\rm{hwt}_{\ll}(\SSS)$ is spanned modulo $\II^{rich}$ by MCB--graphs of type $(\lceil r/2\rceil,\lfloor r/2\rfloor)$. Let $G$ be an MCB--graph of type $(a,b)$ with $a\geq b+2$, and write $A,B$ for the two sets of vertices of its bipartition. Observe first that there exists a leaf $\ccirc{x}\in A$: otherwise, each node in $A$ would have degree at least $2$, so $G$ would have at least $2a\geq a+b+2=r+2$ edges, but $e_{\ll}\leq r$, a contradiction. Let $j$ be the color of the unique edge $(x,y)$ incident to $\ccirc{x}$. We now claim that there is a node $\ccirc{x}\neq\ccirc{z}\in A$ which is not $j$--saturated: if this were false then there would be a total of $(a-1)d_j+1>bd_j$ edges of color $j$, but this is impossible since there are at most $bd_j$ edges of color $j$ incident to the vertices in $B$. 
Consider the 
relation (\ref{eq:3plucker}) with the given $\ccirc{x},\ccirc{y}$ and $\ccirc{z}$. This expresses $[G]=[G_1]+[G_2]$ where $G_1$ is an MCB--graph of type $(a-1,b+1)$ and $G_2$ is a graph containing a rich subgraph, hence $[G]\equiv [G_1]$ modulo $\II^{rich}$. We conclude by induction.

\subsection{Case $e_{\ll}<r$.}\label{case:e<r} We show that any MCB--graph is congruent modulo $\II^{rich}$ with one of type $(a,b)$ with $b\leq a\leq b+2$: since $a+b=e_{\ll}+1$, Proposition \ref{prop:MCB}(4) shows that any MCB--graph of type $(a,a)$ is in fact in $\II^{rich}$, so the condition $a\leq b+2$ yields a unique possibility for a type of MCB--graph not in $\II^{rich}$, namely $(\lceil e_{\ll}/2\rceil+1,\lfloor e_{\ll}/2\rfloor)$. Let $G$ be an MCB--graph of type $(a,b)$ with $a\geq b+3$, and write $A,B$ for the two sets of vertices of its bipartition. By the same arguments as before we can find $\ccirc{x},\ccirc{y},\ccirc{z}$ as in Case \ref{case:e=r}. The relation (\ref{eq:3plucker}) expresses $[G]=[G_1]+[G_2]$ where $G_1$ is an MCB--graph of type $(a-1,b+1)$ and $G_2$ is a union of isolated nodes (including $\ccirc{x}$) and a connected component $G_2'$ which is an MCB--graph of type $(a-1,b)$ with the same number of edges as vertices ($a+b-1$). Since $a-1\geq b+2$, we are in the situation of 
Case \ref{case:e=r}. We get that $[G_2]\equiv [G_3]$ modulo $\II^{rich}$ where $G_3$ is the union of the isolated nodes in $G_2\setminus G_2'$ together with a connected component $G_3'$ which is an MCB--graph of type $(a-2,b+1)$. Now let $(u,v)$ be an edge contained in the unique cycle of $G_3'$ (say $\ccirc{u}\in A$, $\ccirc{v}\in B$), and consider the relation (\ref{eq:3plucker}) with $x,y,z$ replaced by $u,v,x$ respectively. This expresses $[G_3]=[G_4]+[G_5]$ where $G_4$ is an MCB--graph of type $(a-2,b+2)$ (or $(b+2,a-2)$ if $a=b+3$) and $G_5$ is an MCB--graph of type $(a-1,b+1)$. Putting everything together, we get
\[[G]=[G_1]+[G_2]\overset{\II^{rich}}{\equiv} [G_1]+[G_3]=[G_1]+[G_4]+[G_5],\]
where $G_1,G_4,G_5$ are MCB--graphs of type $(a',b')$ with $a'-b'<a-b$. We conclude again by induction.
\end{proof}

We end with the promised

\begin{lemma}\label{lem:mlgeq1}
 If $2f_{\ll}\leq e_{\ll}\leq r$, then there exists a partition $\ul{a}\vdash r$ and a graph $G$ (or equivalently an $n$--tableau $T$) of shape $\ll$, such that $\pi_{\ul{a}}(G)\neq 0$.
\end{lemma}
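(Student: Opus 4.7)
The plan is to translate $\pi_{\ul{a}}$ into a signed combinatorial sum over ``markings'' of the graph $G$, and then construct an explicit pair $(G,\ul{a})$ for which the sum is manifestly nonzero.

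First I would observe that for each $\ul{a}\vdash r$ the space $\mathrm{hwt}_{\ll}(\SSS'_{\ul{a}})$ is at most one-dimensional (a consequence of Lemma~\ref{lem:fundrels}(a)), with a canonical generator $[F'_{\mathrm{can}}]$ determined up to sign; hence $\pi_{\ul{a}}([G])=\alpha\cdot[F'_{\mathrm{can}}]$ for a single scalar $\alpha$, and it suffices to produce $(G,\ul{a})$ with $\alpha\neq 0$. Next I would unpack (\ref{eqn:pia}) at the level of tabloids: $\pi_{\ul{a}}([G])$ is a signed sum over markings in which each vertex $v$ is assigned a color $i_v\in[n]$ and a box of $F^{i_v}$ containing $v$, with $|\{v:i_v=j\}|=a_j$. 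A marking contributes a nonzero summand precisely when for every edge $e$ of color $j$ exactly one endpoint's mark lies in the column of $F^j$ representing $e$; such markings correspond bijectively to orientations of $G$ with out-degree at most one at every vertex, together with a free choice of color and size-one box for each out-degree-zero (sink) vertex. The sign of the contribution is $(-1)^s$, where $s$ counts the edges whose induced orientation agrees with the given orientation of~$G$.

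For the construction, when $r-e_{\ll}\geq f_{\ll}$ I would take $G$ to be a disjoint union of $f_{\ll}$ stars $S_1,\ldots,S_{f_{\ll}}$ with $k_i\geq 2$ edges each (feasible since $e_{\ll}\geq 2f_{\ll}$ and $\lceil\ll^j_2/d_j\rceil\leq f_{\ll}$) plus $r-e_{\ll}-f_{\ll}$ isolated vertices, canonically oriented from centers to leaves, and I would pick $\ul{a}$ with $a_1=\ll^1_2+(r-e_{\ll})$ and $a_j=\ll^j_2$ for $j\geq 2$, so that every sink is forced to color~$1$. A local enumeration over the $k_i+1$ valid orientations at each star (center a sink, or center the source of exactly one edge) gives a per-star contribution $d_1(1-k_i)$ to $\alpha$, and each isolated vertex contributes a factor of $d_1$; combining these gives $\alpha=d_1^{r-e_{\ll}}\prod_i(1-k_i)\neq 0$. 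For the complementary case $r-e_{\ll}<f_{\ll}$ the stars must share vertices and $G$ must be more connected: in the extreme subcase $e_{\ll}=r$ one can take $G$ to be a bipartite connected unicyclic MCB-graph with an even cycle, reducing the valid orientations to the two cyclic orientations whose contributions add (rather than cancel) to yield a nonzero $\alpha$; the remaining subcases are handled by interpolating between the star and cycle constructions using a tree-plus-very-few-isolated structure, combined with the same color concentration trick for~$\ul{a}$.

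The main technical obstacle lies in the low-slack case $r-e_{\ll}<f_{\ll}$, where the MCB structure forces cycles or shared vertices and the enumeration of valid orientations is more delicate. Choosing $G$ so that the valid orientations are few and their signs align, while picking $\ul{a}$ to constrain the marking multiplicities, is what makes the signed sum tractable and rules out unexpected cancellations.
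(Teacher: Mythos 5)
Your combinatorial reading of $\pi_{\ul{a}}$ at the level of tabloids — nonzero summands are in bijection with orientations of $G$ having out-degree at most one at every vertex, with sinks carrying a free choice of a size-one box in some $F^{j}$, and sign equal to $(-1)^{s}$ where $s$ counts the edges whose marking orientation agrees with the reference orientation of $G$ — is correct and is a clean way to organize the calculation. The color-concentration choice $a_{j}=\ll^{j}_{2}$ for $j>1$, $a_{1}=r-\sum_{j>1}a_{j}$, which forces every sink to have color $1$, is exactly the choice the paper makes. I also verified your per-star sign cancellation: the sink at the center contributes $+(d_{1}-e_{1}(S_{i}))$, the $k_{i}$ leaf-sink options contribute $-(k_{i}d_{1}-e_{1}(S_{i}))$, and these combine to $d_{1}(1-k_{i})$, so the unstated color-dependent correction terms really do cancel.

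The gap is that your construction does not cover the whole range $2f_{\ll}\leq e_{\ll}\leq r$. Your disjoint stars use $e_{\ll}+f_{\ll}$ vertices, so the star construction only exists when $r-e_{\ll}\geq f_{\ll}$, and there is a genuine range $e_{\ll}\leq r<e_{\ll}+f_{\ll}$ (including the whole boundary case $r=e_{\ll}$) that you leave to ``interpolation between the star and cycle constructions,'' without giving an actual graph, verifying that a bipartite unicyclic graph with the prescribed color distribution exists, or controlling the sign of the tree part. You also do not verify that the $e_{\ll}$ edges can be distributed among $f_{\ll}$ stars with $k_{i}\geq 2$ while respecting the per-color caps $d_{j}$ per star; this is true (it follows from $e_{\ll}\geq 2f_{\ll}$ and $\lceil\ll^{j}_{2}/d_{j}\rceil\leq f_{\ll}$) but needs an argument. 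So as written the proposal is a plan, not a proof.

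The paper avoids all of these issues by a different and more economical choice of graph: take $m=\lfloor e_{\ll}/2\rfloor$ doubled edges on the pairs $\{1,2\},\{3,4\},\dots,\{2m-1,2m\}$, one extra edge $\{1,2m+1\}$ when $e_{\ll}$ is odd, and $r-e_{\ll}$ isolated vertices. Each doubled pair has exactly two valid orientations, both of sign $-1$, giving a factor $(-2)^{m}$; the extra odd edge has a forced orientation; isolated sinks give $d_{1}^{\,r-e_{\ll}}$. This works uniformly for all $r\geq e_{\ll}$, needs only a short admissibility check (the only delicate colors are $d_{j}\in\{1,2\}$), and produces the explicit scalar $2^{m}(-1)^{m}d_{1}^{\,r-e_{\ll}}$. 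If you want to salvage your approach, you could keep the orientation framework but replace your disjoint stars by the paper's disjoint $2$-cycles, which removes the vertex-budget constraint entirely.
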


\begin{proof}
 Let $m=\lfloor e_{\ll}/2\rfloor$ and consider the Young$^{\d}_r$ tabloid $T$ whose columns of size two are (from left to right)
\begin{equation}\label{eq:2cols}
\ytabonecol{1}{2},\ytabonecol{3}{4},\cdots,\ytabonecol{2m-1}{2m},\ytabonecol{1}{2},\ytabonecol{3}{4},\cdots,\ytabonecol{2m-1}{2m},(\text{and }\ytabonecol{1}{2m+1}) 
\end{equation}
where the last column appears when $e_{\ll}=2m+1$ is odd. For example, when $\d=(1,2,1)$, $r=8$ and $e_{\ll}=7$, $T$ could be
\begin{equation}\label{eq:someT}
\ytableausetup{boxsize=1.25em,tabloids,aligntableaux=center} \ytableaushort{135678,24}\oo\ytableaushort{513512347788,6246}\oo\ytableaushort{1234568,7}.
\end{equation}

To check that $T$ is a valid Young tabloid, we need to show that no $i\in[r]$ appears more than $d_j$ times within the columns of size two of $T^j$. If $d_j=1$ and some $i\in[r]$ occurs twice in $T^j$, we must have $\ll^j_2\geq m+1$, so $f_{\ll}\geq\ll^j_2\geq m+1$, but $2f_{\ll}\leq e_{\ll}\leq 2m+1$, a contradiction. If $d_j=2$ then the only $i\in[r]$ that could appear thrice in $T^j$ would be $1$, in which case all columns in (\ref{eq:2cols}) would belong to $T^j$, so $\ll^j_2=e_{\ll}=2m+1$. But then $f_{\ll}=\lceil(2m+1)/2\rceil=m+1$ and we get a contradiction as before.

We define the partition $\ul{a}\vdash r$ by letting $a_j=\ll^j_2$ for $j>1$, and $a_1=r-(a_2+\cdots+a_n)$. We will show that $\pi_{\ul{a}}(T)\neq 0$. In fact
\begin{equation}\label{eq:piaT}
\pi_{\ul{a}}(T)=2^m\cdot(-1)^m\cdot d_1^{r-e_{\ll}}\cdot \tilde{T}, 
\end{equation}
where $\tilde{T}$ is the Young tabloid obtained by listing in each $\tilde{T}^j$ the $(rd_j-a_j)$ ones followed by the $a_j$ twos, aligned from left to right and top to bottom. Note that $\tilde{T}$ gives a basis for the $1$--dimensional vector space $\rm{hwt}_{\ll}(\SSS'_{\ul{a}})$). For $T$ in (\ref{eq:someT}) we have
\[\ytableausetup{boxsize=1.25em,tabloids,aligntableaux=center} \pi_{\ul{a}}(T)=-8\cdot\ytableaushort{111112,22}\oo\ytableaushort{111111111111,2222}\oo\ytableaushort{1111111,2}.\]
To understand (\ref{eq:piaT}), note that $\pi_{\ul{a}}(T)$ is a sum of Young tabloids $T'$ obtained from $T$ by selecting one of each of the entries $1,\cdots,r$ (in such a way that $a_j$ entries are selected from $T^j$) and replacing them by $2$, while making all other entries of $T$ equal to $1$ (see Example~\ref{ex:tableaux}). All the Young tabloids $T'$ obtained in this way that have repeated entries in some column are zero by Lemma \ref{lem:fundrels}(a), so we disregard them. The only interesting Young tabloids $T'$ that we obtain are therefore the ones whose columns of size two are $\ytabonecol{1}{2}$ or $\ytabonecol{2}{1}$. It follows that if one of the $\ytabonecol{2i-1}{2i}$ columns of $T$ becomes $\ytabonecol{1}{2}$ in $T'$ the other one must become $\ytabonecol{2}{1}$ and vice versa (this yields a total of $2^m$ choices). There is no choice for the column $\ytabonecol{1}{2m+1}$: it must become $\ytabonecol{1}{2}$ in $T'$ since already one of the two $1$s from the two columns $\ytabonecol{1}{2}$ of 
$T$ has been replaced by a $2$. To understand the factor $d_1^{r-e_{\ll}}$ in (\ref{eq:piaT}), note that each of the $e_{\ll}$ twos in the columns of size two of $T'$ correspond to one of $1,2,\cdots,e_{\ll}$ from $T$. This means that we are left with the choice of selecting one of each of the occurrences of $e_{\ll}+1,\cdots,r$ from $T^1$, and this can be done in $d_1^{r-e_{\ll}}$ ways. Finally, the $(-1)^m$ is explained by the fact that in order to get from any $T'$ to $\tilde{T}$ one needs to perform a transposition in each of the columns $\ytabonecol{2}{1}$ of $T'$ (possibly followed by some permutation of the columns of size one of $T'$).
\end{proof}

\section{Minimal generators of the ideal of the tangential variety}\label{sec:mingensI}

In this section we determine the minimal generators of the ideal of the tangential variety of an arbitrary Segre--Veronese variety and obtain as a special case a confirmation of the Landsberg--Weyman conjecture (Conjecture \ref{conj:lanwey}). We will work entirely in the generic setting. We write $\II(q)=\II(\II_q)$ for the ideal in $\SSS$ generated by $\II_q$. The degree $(q+1)$ part of $\II(q)$ is a subfunctor of $\II_{q+1}$, so we can define the \defi{syzygy functor} $\KK_{1,q}=\II_{q+1}/\II(q)_{q+1}$ of minimal generators of degree $(q+1)$ of the ideal of generic equations of the tangential variety. We prove that $\KK_{1,q}=0$ for $q>3$ and give a precise description of a minimal set of generators of $\KK_{1,q}$ for $q=1,2,3$.

We start with the description of $\KK_{1,1}=\II_2$ (note that $\II(1)=0$: $\tau(X)$ is nondegenerate, so there are no linear forms in its ideal). Since $\II_2=\II_2^{rich}$ (Theorem \ref{thm:richgraphs}), $\KK_{1,1}$ is generated by rich graphs on two vertices. These are MCB--graphs of type $(1,1)$ and are nonzero precisely when they have an even number of edges. We get
\[m_{\ll}(\KK_{1,1})=
\begin{cases}
1 & \text{if each }\ll^j\text{ has at most two parts and }\sum_j\ll^j_2>2\text{ is even,}\\
0 & \text{otherwise.}
\end{cases}
\]

\subsection{Proof that $I(\tau(X))$ is generated in degree at most $4$}\label{subsec:Ileq4}
Note first that by Lemma~\ref{lem:triangle} and Proposition~\ref{prop:MCB}(5) graphs that contain an odd cycle belong to the ideal $\II(\II^{rich}_3)$ generated by rich graphs with three vertices. Consider an arbitrary rich graph $H$ without odd cycles: we would like to prove that $H$ is in the ideal $\II(\II^{rich}_{\leq 4})$ generated by rich graphs with at most $4$ vertices. If $H$ contains a rich subgraph with $4$ vertices, then the conclusion follows from Proposition \ref{prop:idealgraphs}. In fact, consider a minimal subgraph $G$ of $H$ that is rich: $G$ is connected and $|E(G)|=|V(G)|+1$, so it contains precisely two (independent) cycles. If we can show using only the Pl\"ucker relations (\ref{eq:3plucker}) and (\ref{eq:plucker}) that $G$ is a linear combination of graphs containing rich subgraphs with at most $4$ vertices, then we can lift those relations to $H$ by Remark \ref{rem:subgraphs} to conclude that the same is true about $H$. Proposition \ref{prop:idealgraphs} then applies to show that $[H]\
in\II(\II^{rich}_{\leq 4})$. This will be our proof strategy.

For most of this section we disregard the orientation of the arrows since it's irrelevant for our arguments. Our conclusions should then be interpreted ``up to sign''. We start with

\begin{lemma}\label{lem:star}
 Suppose that $G$ is a graph containing
\[\xymatrix@=15pt{
 & \ccircle{x} \ar@(ul,dl)@{..}[dd]  \ar@{-}[dr]^{E_1} & & \\
 G':\quad\quad &  & \ccircle{y} \ar@{-}[r]^{E_3} & \ccircle{z} \\
 & \ccircle{t} \ar@{-}[ur]_{E_2} & & \\
}
\xymatrix@=15pt{
 & \ccircle{x} \ar@(ul,dl)@{..}[dd]  \ar@{-}[dr]^{E_1} & & & \\
 \quad\text{ resp. }\quad G'':\quad\quad &  & \ccircle{y} \ar@{-}[r]^{E_3} & \ccircle{z} \ar@{-}[r]^{E_4} & \ccircle{w}\\
 & \ccircle{t} \ar@{-}[ur]_{E_2} & & & \\
}
\]
where $(x,y)$ and $(y,t)$ are consecutive edges in an even cycle (possibly $x=t$). We have that $[G]\equiv[\tilde{G}]$ modulo $\II(\II^{rich}_3)$, where $\tilde{G}$ contains the subgraph $\tilde{G}'$ (resp. $\tilde{G}''$) obtained from $G'$ (resp. $G''$) by replacing any (resp. one) of the edges $E_1$ and $E_2$ with a new edge between $\ccirc{y}$ and $\ccirc{z}$. Moreover, one can insure that there is at most one edge of $G$ outside $G'$ (resp. $G''$) that is not preserved when going from $G$ to $\tilde{G}$, and that that edge is incident to $\ccirc{z}$.
\end{lemma}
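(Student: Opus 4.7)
The two cases $G'$ and $G''$ admit essentially the same argument, so I focus on the $G'$ case and on replacing $E_1 = (x,y)$ (the treatment of $E_2$ is symmetric via swapping the roles of $\ccirc{x}$ and $\ccirc{t}$). Let $j_1$ denote the color of $E_1$, and write the given even cycle as $x = v_0, y = v_1, t = v_2, v_3, \ldots, v_{2k-1}, v_{2k} = x$ of length $2k$, with $k \geq 1$ (the degenerate case $x=t$ corresponds to $k=1$).

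The plan is to apply a single Pl\"ucker relation that moves the endpoint $\ccirc{x}$ of $E_1$ to $\ccirc{z}$, and then to show that the alternative term, in which $E_1$ is moved to $(x,z)$ rather than $(y,z)$, is killed modulo $\II(\II^{rich}_3)$ by an odd-cycle argument. Concretely, if $\ccirc{z}$ is not $j_1$-saturated in $G$, I apply~(\ref{eq:3plucker}) to the edge $E_1$ together with the vertex $\ccirc{z}$; if $\ccirc{z}$ is $j_1$-saturated, I choose any color-$j_1$ edge $(z,w)$ incident to $\ccirc{z}$ and apply~(\ref{eq:plucker}) to the pair $\{E_1, (z,w)\}$. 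Using Remark~\ref{rem:subgraphs} to lift the relation from the chosen subgraph to $G$, one obtains, up to signs, $[G] = [\tilde G] + [G_A]$, where $\tilde G$ is the target graph of the lemma (in the saturated case the edge $(z,w)$ is additionally swapped for $(x,w)$, which furnishes precisely the one permitted change of an edge outside $G'$ and is incident to $\ccirc{z}$), while $G_A$ is obtained from $G$ by replacing $E_1$ with $(x,z)$ (and, in the saturated case, $(z,w)$ with $(y,w)$). All intermediate graphs remain admissible because the degrees of $\ccirc{x}, \ccirc{y}, \ccirc{z}, \ccirc{w}$ at each color are preserved by these substitutions.

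The heart of the argument is to show $[G_A] \in \II(\II^{rich}_3)$. The graph $G_A$ retains the edges $E_2 = (y,t)$, $E_3 = (y,z)$, and the surviving sub-path $t - v_3 - \cdots - v_{2k-1} - x$ of length $2k-2$, and contains in addition the new edge $(x,z)$. Concatenating these yields the cycle $x - z - y - t - v_3 - \cdots - v_{2k-1} - x$ of length $(2k-2)+3 = 2k+1$, which is odd (in the degenerate case $k=1$ this is the triangle $x - z - y - x$). By Proposition~\ref{prop:MCB}(5), graphs containing odd cycles are linear combinations of graphs containing triangles, and by Lemma~\ref{lem:triangle} such graphs lie in $\II(\II^{rich}_3)$; hence $[G_A] \equiv 0$ modulo $\II(\II^{rich}_3)$. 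Combined with the previous step, this gives $[G] \equiv [\tilde G]$ modulo $\II(\II^{rich}_3)$, as claimed. For the $G''$ case the argument is verbatim the same, the extra edge $E_4 = (z,w)$ playing no role in the non-saturated subcase and naturally serving as the required $(z,w)$ in the saturated subcase.

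The principal obstacle is conceptual: one must choose the right Pl\"ucker move so that the alternative graph $G_A$ acquires an odd cycle, assembled from the original even cycle through $\ccirc{y}$, the edge $E_3$, and the new edge $(x,z)$. Once this odd cycle has been identified, the rest follows automatically from Proposition~\ref{prop:MCB}(5) and Lemma~\ref{lem:triangle}. A secondary technical point is the bookkeeping around the saturated case, where~(\ref{eq:plucker}) must be used in place of~(\ref{eq:3plucker}) and an auxiliary edge $(z,w)$ must be tracked: this is exactly the source of the additional ``one edge outside $G'$ (resp.\ $G''$)'' allowed by the lemma.
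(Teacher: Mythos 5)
Your argument for the $G'$ case is in line with the paper's: apply a Pl\"ucker move to push the endpoint $\ccirc{x}$ of $E_1$ onto $\ccirc{z}$, observe that the rejected term acquires an odd cycle of length $2k+1$ built from the original even cycle, $E_3$, and the new edge $(x,z)$, and invoke Proposition~\ref{prop:MCB}(5) together with Lemma~\ref{lem:triangle}. That part is fine.

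There is, however, a genuine gap in your treatment of the $G''$ case. You fix at the outset that you will replace $E_1$, and in the saturated subcase you let $E_4=(z,w)$ serve as the auxiliary edge in~(\ref{eq:plucker}). But the resulting graph $\tilde{G}$ then carries $(x,w)$ in place of $E_4$, so $\tilde{G}$ fails to contain $\tilde{G}''$: by definition $\tilde{G}''$ is $G''$ with one of $E_1,E_2$ replaced by a new edge $(y,z)$, and in particular it retains $E_4$. Your own summary ("this is exactly the source of the additional `one edge outside $G'$ (resp.\ $G''$)'~\ldots") is self-contradictory here: $E_4$ is an edge of $G''$, not an edge outside it, and the lemma explicitly insists that the one changed edge lie outside $G''$ and be incident to $\ccirc{z}$. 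This situation does occur: take $d_{c_1}=1$ with $E_1$ and $E_4$ of color $c_1$ and $E_3$ of some other color. Then $\ccirc{z}$ is $c_1$-saturated already within $G''$, $E_4$ is the unique color-$c_1$ edge at $\ccirc{z}$, and your recipe is forced to use it, producing a $\tilde{G}$ that does not contain $\tilde{G}''$. Note that preservation of $E_4$ is not optional in the downstream applications: Lemma~\ref{lem:distantcycles} uses exactly the survival of $E_4$ to slide the $2$-cycle one step along the path $P$.

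What the paper does instead is first prove that $\ccirc{z}$ cannot be $c_i$-saturated \emph{in $G''$} for both $i=1$ and $i=2$ (a short two-case contradiction argument), and then replaces the $E_i$ for which $\ccirc{z}$ is not $c_i$-saturated in $G''$. With that choice, when one lifts the relation from $G''$ to $G$ via Remark~\ref{rem:subgraphs}, any edge of color $c_i$ at $\ccirc{z}$ that needs to be swapped out necessarily lies in $E(G)\setminus E(G'')$, which is what the lemma's ``moreover'' clause asserts. This is also precisely the reason the lemma promises ``any'' of $E_1,E_2$ in the $G'$ case but only ``one'' in the $G''$ case: for $G'$ the analogous argument shows $\ccirc{z}$ is not $c_i$-saturated in $G'$ for \emph{either} $i$, whereas for $G''$ the presence of $E_4$ can block one (but not both) of the two choices. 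Your proof, by always committing to $E_1$, misses this case split. A secondary infelicity is the phrase ``any color-$j_1$ edge $(z,w)$ incident to $\ccirc{z}$'': you must restrict to edges not already in the subgraph, which in the $G'$ case is forced by admissibility but in the $G''$ case is exactly the constraint at issue.
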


\begin{proof} We prove the lemma in the case when $G$ contains $G''$, and we mention at the end the changes that are needed to prove the statement for $G'$. Write $c_i$ for the color of the edge $E_i$. We claim that $\ccirc{z}$ is not $c_i$--saturated as a vertex of $G''$ for some $i=1,2$ (note that $\ccirc{z}$ could be $c_i$--saturated as a vertex of $G$ for both $i=1,2$). If this wasn't true, one of the following alternatives would hold:
\begin{itemize}
 \item $c_1=c_2=c$, in which case we must have $d_c\geq 2$. Since $\ccirc{z}$ is $c$--saturated, we must have $d_c=2$ and both edges $E_3$ and $E_4$ incident to $\ccirc{z}$ would have to have color $c$. In particular, $E_1,E_2,E_3$ would be $3>d_c$ edges of color $c$ incident to $\ccirc{y}$, a contradiction.

 \item $c_1\neq c_2$, in which case the two edges $E_3$ and $E_4$ incident to $\ccirc{z}$ would have to have colors $c_1$ and $c_2$ (in some order), and moreover, $d_{c_1}=d_{c_2}=1$. We may assume that $c_3=c_1$ in which case $E_1$ and $E_3$ are $2>d_{c_1}$ edges of color $c_1$ incident to $\ccirc{y}$, a contradiction.
\end{itemize}
We may then assume that $\ccirc{z}$ is not $c_1$--saturated, and use relation (\ref{eq:3plucker}). This expresses $[G'']=[G''_1]+[\tilde{G}'']$ where $G_1''$ contains an odd cycle, so $[G_1'']\in\II(\II^{rich}_3)$.
\[\xymatrix@=15pt{
 & \ccircle{x} \ar@(ul,dl)@{..}[dd]  \ar@{-}[drr]^{E_1} & & & \\
 G''_1:\quad\quad &  & \ccircle{y} \ar@{-}[r]_{E_3} & \ccircle{z} \ar@{-}[r]^{E_4} & \ccircle{w}\\
 & \ccircle{t} \ar@{-}[ur]_{E_2} & & & \\
}
\xymatrix@=15pt{
 & \ccircle{x} \ar@(ul,dl)@{..}[dd]  & & & \\
 \quad\text{ and }\quad \tilde{G}'':\quad\quad &  & \ccircle{y} \ar@<.5ex>@{-}[r]^{E_1} \ar@<-.5ex>@{-}[r]_{E_3} & \ccircle{z} \ar@{-}[r]^{E_4} & \ccircle{w}\\
 & \ccircle{t} \ar@{-}[ur]_{E_2} & & & \\
} 
\]
We lift the relation $[G'']=[G''_1]+[\tilde{G}'']$ (as in Remark \ref{rem:subgraphs}) to a relation $[G]=[G_1]+[\tilde{G}]$ where $[G_1]\in\II(\II^{rich}_3)$ and $\tilde{G}$ contains $\tilde{G}''$, which yields the desired conclusion. To explain the last statement of the lemma, note that if $(z,w')$ is an edge of color $c_1$ in $G$ which when added to $G''$ makes $\ccirc{z}$ a $c_1$--saturated vertex, then when lifting the relation $[G'']=[G''_1]+[\tilde{G}'']$ to $[G]=[G_1]+[\tilde{G}]$, we will have to replace the edge $(z,w')$ by $(x,w')$.

In the case when $G$ contains $G'$, note that $\ccirc{z}$ is not $c_i$--saturated as a vertex of $G'$ for any $i=1,2$ (otherwise $c_3=c_i$, $d_{c_i}=1$ and $E_i,E_3$ are two edges of color $c_i$ incident to $\ccirc{y}$). We can then apply relation (\ref{eq:3plucker}) with $\ccirc{z}$ and any of the edges $E_1$, $E_2$, and conclude as before.
\end{proof}

Going back to our graph $H$ and its minimal rich subgraph $G$, we show that we may assume that $G$ contains a $2$--cycle. Consider a cycle $C$ in $G$ of minimal (even) length $l(C)$, and assume that $l(C)>2$. Since $C$ has minimal length, and since $G$ is connected and contains a cycle other than $C$, there must be an edge $(y,z)$ in $G$ with $\ccirc{y}\in C$ and $\ccirc{z}$ outside $C$. Letting $G'$ be the subgraph consisting of $C$ together with the edge $(y,z)$ we can apply Lemma \ref{lem:star} to reduce to the case when $G$ contains a $2$--cycle $C_0$. Consider a second cycle $C\neq C_0$ in $G$ and assume for now that $C$ and $C_0$ are disjoint (i.e. have no common vertices).

\begin{lemma}\label{lem:distantcycles}
 Assume that $G$ is a graph containing disjoint cycles $C_0$ and $C$, with $l(C_0)=2$. Let $P=yz\cdots u$ be a path joining a vertex of $C_0$ to one of $C$. We call $l(P)$ the \defi{distance} between $C$ and $C_0$.
\[\quad
\xymatrix{
\ccircle{x} \ar@<-.5ex>@/_/@{-}[r]_{E_2} \ar@<.5ex>@/^/@{-}[r]^{E_1}_{C_0} & \ccircle{y} \ar@{-}[r]^{E_3} & \ccircle{z} \ar@{-}[r]^{E_4} & \ccircle{w} \ar@{-}[r] & \cdots \ar@{-}[r] & \ccircle{u} \ar@(ur,dr)@{..}^C \\
}
\]
We have $[G]\equiv[\tilde{G}]$ modulo $\II(\II^{rich}_3)$, where $\tilde{G}$ contains $C$ and a $2$--cycle $C_0'$ at distance $1$ from $C$.
\end{lemma}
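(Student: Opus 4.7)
The proof will proceed by induction on $l(P) \geq 1$, using Lemma~\ref{lem:star} at each step to ``slide'' the $2$--cycle one vertex closer to $C$ along the path $P$. The base case $l(P) = 1$ is immediate: take $\tilde{G} = G$ and $C_0' = C_0$.

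For the inductive step, assume $l(P) \geq 2$. Let $E_1, E_2$ denote the two edges of $C_0$ (both joining $x$ and $y$), and let $E_3 = (y,z)$ and $E_4 = (z,w)$ be the first two edges of $P$. Set $G'' = C_0 \cup \{E_3, E_4\}$ and apply Lemma~\ref{lem:star} with $t = x$: the ``even cycle'' required in the hypothesis is simply $C_0$ itself (of length $2$, with $E_1$ and $E_2$ as its two consecutive edges once we identify $t$ with $x$). This gives $[G] \equiv [\tilde{G}_1]$ modulo $\II(\II^{rich}_3)$, where $\tilde{G}_1$ contains $\tilde{G}''$ obtained from $G''$ by replacing one of $E_1, E_2$ with a new edge between $y$ and $z$. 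Together with $E_3$ this yields a new $2$--cycle $C_0''$ on $\{y, z\}$. The key reason we work with $G''$ rather than $G'$ is that $E_4$, being inside $G''$, is guaranteed to be retained in $\tilde{G}_1$, so the tail $z, w, \ldots, u$ of $P$ survives as a path of length $l(P)-1$ from $C_0''$ to $C$ in $\tilde{G}_1$.

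Before invoking induction, I need to check that $\tilde{G}_1$ still satisfies the hypotheses of the lemma. Since $C$ is disjoint from $C_0$ and $P$ meets $C$ only at $u$, neither $y$ nor $z$ lies in $C$, so $C_0''$ is disjoint from $C$. By the final clause of Lemma~\ref{lem:star}, at most one edge of $G$ outside $G''$ is modified when passing to $\tilde{G}_1$, and if so that edge is incident to $z$ (and is re-routed to start at $x$ instead). Edges of $C$ are not incident to $z$, so $C$ is preserved intact in $\tilde{G}_1$; similarly, the remaining edges $(w, \ldots), \ldots, (\cdot, u)$ of $P$ beyond $E_4$ are incident only to vertices $w, w', \ldots, u$ and hence untouched.

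Applying the induction hypothesis to $\tilde{G}_1$ with the disjoint cycles $C_0''$ and $C$ joined by a path of length $l(P)-1$ produces a graph $\tilde{G}$ with $[\tilde{G}_1] \equiv [\tilde{G}]$ modulo $\II(\II^{rich}_3)$ containing $C$ together with a $2$--cycle $C_0'$ at distance $1$ from $C$; combining gives $[G] \equiv [\tilde{G}]$, as desired. The main point of care in the argument is the bookkeeping around which edges are preserved by Lemma~\ref{lem:star}: by forcing both $E_3$ and $E_4$ into the distinguished subgraph $G''$, the clause that at most one altered edge is incident to $z$ ensures that neither the next step of the path nor any edge of $C$ gets disturbed. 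I expect this to be the only genuine obstacle, since the sliding argument itself is a direct application of the preceding lemma.
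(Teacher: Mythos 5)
Your argument is correct and is essentially the paper's own proof: the paper also slides $C_0$ along $P$ by applying Lemma~\ref{lem:star} with $t=x$ and $G''$ the subgraph determined by $E_1,\ldots,E_4$, decreasing the distance by one at each step. Your version merely makes the induction and the bookkeeping (why $E_4$ and the rest of $P$ and $C$ survive the sliding step) explicit, which the paper leaves implicit.
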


\begin{proof}
 Use Lemma \ref{lem:star} to slide the $2$--cycle $C_0$ down the path $P$: namely if $l(P)>1$, apply the said lemma with $t=x$ and $G''$ the subgraph consisting of the edges $E_1,\cdots,E_4$.
\end{proof}
\noindent By Lemma \ref{lem:distantcycles}, we may assume that the distance between $C$ and $C_0$ is $1$, i.e. $G$ contains
\[
\xymatrix@=15pt{
  & & & \ccircle{w}  \ar@(ur,dr)@{..}[dd]^C \\
  \ccircle{x} \ar@<-.5ex>@/_/@{-}[r]_{E_2} \ar@<.5ex>@/^/@{-}[r]^{E_1}_{C_0} & \ccircle{y} \ar@{-}[r]^{E_3} & \ccircle{z} \ar@{-}[ur]^{E_4} \ar@{-}[dr]_{E_5} & \\
  & & & \ccircle{w'} \\
} 
\]
We can still apply Lemma \ref{lem:star} for $G$ containing the subgraph $G''$ determined by the edges $E_1,\cdots,E_4$. The argument in the proof of the said lemma implies that $\tilde{G}$ must be one of
\[\xymatrix@=15pt{
 & & & \ccircle{w}  \ar@(ur,dr)@{..}[dd]^C \\
 \ccircle{x} \ar@{-}[r]_{E_2} & \ccircle{y} \ar@<-.5ex>@{-}[r]_{E_3} \ar@<.5ex>@{-}[r]^{E_1} & \ccircle{z} \ar@{-}[ur]^{E_4} \ar@{-}[dr]_{E_5}  \\
 & & & \ccircle{w'} \\
}
\xymatrix@=15pt{
 & & & & \ccircle{w}  \ar@(ur,dr)@{..}[dd]^C \\
\quad\text{ or }\quad\quad & \ccircle{x} \ar@{-}[drrr]_{E_5} \ar@{-}[r]^{E_2} & \ccircle{y} \ar@<-.5ex>@{-}[r]_{E_3} \ar@<.5ex>@{-}[r]^{E_1} & \ccircle{z} \ar@{-}[ur]^{E_4}   \\
 & & & & \ccircle{w'} \\
}
\]
(the latter situation occurs for example when $E_1,E_2,E_4,E_5$ have the same color $c$ and $d_c=2$.) We are then reduced to the case when $C$ and $C_0$ intersect: in the first case they have a common vertex, while in the second they share a common edge. The latter case is treated in Lemma \ref{lem:doublecycle} below, and we show that the former cases reduces to it. Assume that $G$ contains
\[
\xymatrix@=15pt{
 & & \ccircle{z} \ar@{-}[r]^{E_4} & \ccircle{w} \ar@(ur,dr)@{..}[ddl]^C \\
\ccircle{x} \ar@<-.5ex>@{-}[r]_{E_2} \ar@<.5ex>@{-}[r]^{E_1} & \ccircle{y} \ar@{-}[ur]^{E_3} \ar@{-}[dr] & \\
 & & \ccircle{w'} \\
} 
\]
Applying again Lemma \ref{lem:star} with $x=t$ and $G''$ the subgraph determined by $E_1,\cdots,E_4$ (note that now $E_3$ and $E_4$ are the only edges in $C$ incident to $\ccirc{z}$) we are reduced to the situation of Lemma \ref{lem:doublecycle}.

\begin{lemma}\label{lem:doublecycle} 
 If $H$ is a graph containing a subgraph $G_0$ that consists of a $2$--cycle $C_0$ and an even cycle $C$ intersecting along a common edge, then $[H]\in\II(\II^{rich}_{\leq 4})$.
\end{lemma}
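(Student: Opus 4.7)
My plan is to prove Lemma \ref{lem:doublecycle} by induction on $k$, where the cycle $C$ has length $2k$. For the base cases $k=1$ and $k=2$, the subgraph $G_0$ has $|V(G_0)|=2k\leq 4$ vertices and $|E(G_0)|=2k+1$ edges, so $G_0$ is itself a rich subgraph on at most four vertices, and Proposition \ref{prop:idealgraphs} immediately delivers $[H]\in\II(\II^{rich}_{\leq 4})$.

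For the inductive step $k\geq 3$, I follow the general plan outlined just before Lemma \ref{lem:star}: by Remark \ref{rem:subgraphs}, it suffices to express $[G_0]$ modulo $\II(\II^{rich}_3)$ as a linear combination of tabloids associated to graphs containing a rich subgraph on at most four vertices, using only the Pl\"ucker relations (\ref{eq:3plucker}) and (\ref{eq:plucker}). Label $G_0$'s vertices by $x,y,v_1,\ldots,v_{2k-2}$, with $C_0$ the double edge at $\{x,y\}$ and $C$ traversing $x-y-v_1-\cdots-v_{2k-2}-x$. The key move is to apply (\ref{eq:plucker}) to one edge $E_1$ of $C_0$ paired with the neighboring $C$-edge $(v_1,v_2)$: the expansion splits $[G_0]$ into two tabloids. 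The first contains a triangle on $\{x,y,v_1\}$, formed by the inserted edge $(x,v_1)$ together with the preexisting $(y,v_1)$ and the remaining copy of $(x,y)$, and therefore lies in $\II(\II^{rich}_3)$ by Lemma \ref{lem:triangle}. The second contains a new $2$-cycle at $(y,v_1)$ (the preexisting $(y,v_1)$ together with the newly inserted copy), the shortened even cycle $x-v_2-v_3-\cdots-v_{2k-2}-x$ of length $2(k-1)$, and the surviving edge of $C_0$ at $(x,y)$ joining these two cycles at distance one. Applying Lemma \ref{lem:distantcycles} (whose hypothesis demands a $2$-cycle, which we now have) and then the further invocation of Lemma \ref{lem:star} described in the text immediately preceding Lemma \ref{lem:doublecycle} (which handles the shared-vertex case by one more sliding step), this second tabloid is reduced to a graph satisfying the hypothesis of Lemma \ref{lem:doublecycle} with parameter $k'=k-1<k$, and the inductive hypothesis closes the step.

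The principal obstacle is color management: the Pl\"ucker relation (\ref{eq:plucker}) is applicable only when the two edges it manipulates share a color. When the chosen pair $E_1,(v_1,v_2)$ do not match, I will first try the symmetric pair $E_1,(v_{2k-3},v_{2k-2})$ and the analogous pairs obtained by swapping the two edges of $C_0$. Failing that, I will invoke Lemma \ref{lem:star} (or the equivalent (\ref{eq:3plucker}) move with an unsaturated interior vertex of $C$) to slide $C_0$ one step along $C$, which modulo $\II(\II^{rich}_3)$ produces a new graph of the same shape as $G_0$ but with $C_0$ relocated and its color set changed by one entry, and iterate. The careful bookkeeping of which Pl\"ucker pair remains applicable after each slide, and the verification that the process terminates, is the most delicate point of the argument; in the remaining corner case one instead applies (\ref{eq:plucker}) directly to two $C$-edges that share a color, producing (after discarding a triangle or odd-cycle term via Lemma \ref{lem:triangle} and Proposition \ref{prop:MCB}(5)) a graph containing a $2$-cycle together with an even cycle of length strictly less than $2k$ joined by an edge, which again reduces via Lemmas \ref{lem:distantcycles} and \ref{lem:star} to a smaller instance of Lemma \ref{lem:doublecycle}.
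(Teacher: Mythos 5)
Your proof takes a genuinely different route from the paper's, and unfortunately the different route has gaps that I don't see how to close as written.

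The paper's proof of Lemma~\ref{lem:doublecycle} has a three-case structure: (i) if $H$ has two $2$--cycles in the same component, reduce via Lemma~\ref{lem:distantcycles} to a rich subgraph on $\leq 4$ vertices; (ii) if $G_0$ is not a connected component, use Lemma~\ref{lem:star} to grow a second $2$--cycle and fall into case (i); (iii) if $G_0$ \emph{is} a connected component, by Proposition~\ref{prop:idealgraphs} it suffices to show $[G_0]\in\II(\II^{rich}_{\leq 4})$, and here comes the key observation you miss entirely: $G_0$ is an MCB--graph of type $(a,a)$ with an odd number of edges ($2a+1$), so Proposition~\ref{prop:MCB}(4) and Lemma~\ref{lem:triangle} show $[G_0]\in\II(\II^{rich}_3)$ directly, with no induction. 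Your approach replaces this structural shortcut by an induction on $l(C)=2k$, reducing via a single Pl\"ucker move and the ``distance 1'' machinery that precedes the lemma in the text. That would certainly be a more elementary argument if it worked, but it has two real problems.

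First, the color obstruction is not a corner case you can defer: the Pl\"ucker relation~(\ref{eq:plucker}) simply does not exist between $E_1$ and $(v_1,v_2)$ unless they share a color, and none of the four pairs you list need to be color-compatible. Your fallback --- sliding $C_0$ with Lemma~\ref{lem:star} and iterating, or applying~(\ref{eq:plucker}) to two $C$-edges of the same color --- is never reduced to a finite, verifiable procedure. In particular, a cycle of length $2k$ whose $2k$ edges use $2k$ distinct colors (each with $d_j=1$) has no pair of same-colored $C$-edges at all, so your ``remaining corner case'' fails there; the sliding option modifies the color multiset of $C_0$ by only one entry at a time, and you give no invariant that decreases. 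Second, and more structurally: even when the Pl\"ucker move applies, you route the resulting graph $G_2$ through the ``distance-1 to shared-vertex/edge'' reduction from the text before Lemma~\ref{lem:doublecycle}. But that reduction explicitly has a second case (when the edge $E_5$ must be relocated, e.g.\ when $E_1,E_2,E_4,E_5$ share a color $c$ with $d_c=2$) in which the shared-edge cycle that emerges is \emph{longer} by two than the cycle you started from. In your setup $G_2$ has cycle length $2k-2$, so after this branch you land back at cycle length $2k$, and your induction measure does not decrease. Without ruling out that branch, the inductive step does not close.

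In short, the paper's proof is not an induction on cycle length at all; the crucial step is the identification of $G_0$ as an MCB--graph of type $(a,a)$ with an odd number of edges (plus the preliminary reduction to the case where $G_0$ is a connected component), which makes the conclusion immediate from Proposition~\ref{prop:MCB}(4). Your Pl\"ucker-only induction would need a new well-founded measure and a complete color-management argument to be viable, and I don't see a way to supply those without essentially rediscovering the MCB observation.
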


\begin{proof} If $H$ contains two $2$--cycles in the same connected component, then by Lemma \ref{lem:distantcycles} we may assume that these cycles are at distance at most $1$ from each other, so they determine a rich subgraph of $H$ with at most $4$ vertices and we can conclude using Proposition \ref{prop:idealgraphs}.
 
Assume now that $H$ doesn't contain two $2$--cycles joined by a path, and take $C$ to be a minimal (even) cycle sharing an edge with $C_0$. If $G_0=C\cup C_0$ is not a connected component of $H$, then we can find an edge $E_1=(x,y)\in E(C)\setminus E(C_0)$, and an edge $E_3=(y,z)\in E(H)$ with $\ccirc{z}\notin C$. Applying Lemma \ref{lem:star} we reduce to the case when $H$ contains two $2$--cycles joined by a path.

Finally, assume that $G_0$ is a connected component of $H$. By the previous paragraph, if $G$ is any connected graph that strictly contains $G_0$, then $[G]\in\II(\II^{rich}_{\leq 4})$. In order to show that $[H]\in\II(\II^{rich}_{\leq 4})$ it is then sufficient by Proposition \ref{prop:idealgraphs} to show that $[G_0]\in\II(\II^{rich}_{\leq 4})$. Writing $|V(G_0)|=2a$ we see that $G_0$ is an MCB--graph of type $(a,a)$ with an odd number of edges (namely $2a+1$) so by Proposition \ref{prop:MCB}(4) and Lemma \ref{lem:triangle}, we get that $[G_0]\in\II(\II^{rich}_3)$ concluding the proof.
\end{proof}

Throughout the rest of the paper, when referring to a Young tabloid, we will only illustrate its columns of size larger than $1$, the rest being determined (up to a permutation) by those and by $\d$.

\subsection{Minimal generators of degree $3$}\label{subsec:mingens3}
 In this section we describe the minimal generators of degree $3$ of the generic ideal $\II$ of $\t(X)$.
\begin{theorem}\label{thm:mingens3} For $\ll\vdash^n 3\d$, we have that $m_{\ll}(\KK_{1,2})\in\{0,1\}$ for all $\ll$, with (essentially) one exception: if $\ll\vdash^n 3\d$ has the property that $\ll^j=(3d_j-1,1)$ for exactly four values of $j$, and $\ll^j=(3d_j)$ for the rest, then $m_{\ll}(\KK_{1,2})=2$. More precisely, the following Young tabloids (classified according to the number of columns of size larger than $1$) give a minimal list of generators of the syzygy functor $\KK_{1,2}$ (note that some of them only occur for certain values of $n$ and $\d$, as specified; for simplicity, we state the results only up to a permutation of $d_1,\cdots,d_n$).

\begin{enumerate}
 \item Two columns of size $3$: 
\begin{equation}\label{eq:two3}
\ytableausetup{boxsize=1.25em,tabloids,aligntableaux=center}
  \ytableaushort{11,22,33}\quad\textrm{ or }\quad\ytableaushort{1,2,3}\oo\ytableaushort{1,2,3},
\end{equation}
where $d_1\geq 2$ for the first tabloid, and $n\geq 2$ for the second.

 \item One column of size $3$.
\begin{itemize}
 \item Two columns of size $2$:
\begin{equation}\label{eq:one3two2}
\ytableausetup{boxsize=1.25em,tabloids,aligntableaux=center}
  \ytableaushort{11,22,3}\oo\ytableaushort{1,3}\quad\textrm{ or }\quad\ytableaushort{1,2,3}\oo\ytableaushort{1,2}\oo\ytableaushort{1,3},
\end{equation}
where $n,d_1\geq 2$ for the first tabloid, and $n\geq 3$ for the second.

 \item Three columns of size $2$:
\begin{equation}\label{eq:one3three2}
\ytableausetup{boxsize=1.25em,tabloids,aligntableaux=center}
  \ytableaushort{112,233}\oo\ytableaushort{1,2,3}\quad\textrm{ or }\quad\ytableaushort{1112,2233,3},
\end{equation}
where $n\geq 2$, $d_1=2$ for the first tabloid, and $d_1=3$ for the second.
\end{itemize}

 \item No column of size $3$.
\begin{itemize}
 \item Four columns of size $2$:
\begin{equation}\label{eq:zero3four2grps}
\ytableausetup{boxsize=1.25em,tabloids,aligntableaux=center}
  \ytableaushort{11,23}\oo\ytableaushort{11,23}\quad\textrm{ or }\quad\ytableaushort{11,23}\oo\ytableaushort{1,2}\oo\ytableaushort{1,3}
\end{equation}
\begin{equation}\label{eq:zero3four2sings}
\ytableausetup{boxsize=1.25em,tabloids,aligntableaux=center}
  \textrm{ or }\quad\ytableaushort{1,2}\oo\ytableaushort{1,3}\oo\ytableaushort{1,2}\oo\ytableaushort{1,3}\quad\textrm{ or }\quad\ytableaushort{1,2}\oo\ytableaushort{1,3}\oo\ytableaushort{1,3}\oo\ytableaushort{1,2},
\end{equation}
where $n\geq 2$ for the first tabloid, $n\geq 3$ for the second, and $n\geq 4$ for the third and fourth.
 \item Five columns of size $2$:
\begin{equation}\label{eq:zero3five2}
\ytableausetup{boxsize=1.25em,tabloids,aligntableaux=center}
  \ytableaushort{1121,2332}\oo\ytableaushort{1,3}\quad\textrm{ or }\quad\ytableaushort{112,233}\oo\ytableaushort{1,2}\oo\ytableaushort{1,3},
\end{equation}
where $n\geq 2$, $d_1=3$ for the first tabloid, and $n\geq 3$, $d_1=2$ for the second.
 \item Six columns of size $2$:
\begin{equation}\label{eq:zero3six2}
\ytableausetup{boxsize=1.25em,tabloids,aligntableaux=center}
  \ytableaushort{112,233}\oo\ytableaushort{112,233}\quad\textrm{ or }\quad\ytableaushort{112112,233233},
\end{equation}
where $n\geq 2$, $d_1=d_2=2$ for the first tabloid, and $d_1=4$ for the second.
\end{itemize}
\end{enumerate}
\end{theorem}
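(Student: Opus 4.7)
We work entirely in the generic setting and analyze $\KK_{1,2}=\II_3/\II(2)_3$ in degree $r=3$. Two structural inputs control the argument: Theorem~\ref{thm:richgraphs} gives $\II=\II^{rich}+\II^{<3}$, and the explicit description $\II_2=\KK_{1,1}$ consists of rich $2$-vertex graphs with an even number of edges $e_\ll\geq 4$. Combined with Proposition~\ref{prop:idealgraphs}, this pins down $\II(2)_3$ as the span of $3$-vertex tabloids whose induced $2$-vertex subgraph is itself in $\II_2$. Since $r=3$, any part $\ll^j$ with three or more rows forces exactly one column of size~$3$, so we organize the enumeration by the number of size-$3$ columns, giving the three cases of the theorem.

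\emph{Reductions and enumeration.} Within each case we apply three reductions modulo $\II(2)_3$: (i) by Lemma~\ref{lem:triangle} and Proposition~\ref{prop:MCB}(5), graphs containing a triangle or an odd cycle vanish on $3$ vertices (pure triangles give $[G]=0$, as computed in the proof of Lemma~\ref{lem:triangle}); (ii) the Pl\"ucker relations \eqref{eq:3plucker}–\eqref{eq:plucker}, packaged by Theorem~\ref{thm:idealdtab}, let us rewrite a tabloid with an excess of parallel same-color edges in terms of tabloids with lower column content, leaving only a controlled list of representatives; and (iii) Proposition~\ref{prop:idealgraphs} absorbs any tabloid whose induced $2$-vertex subgraph is already rich with even edge count into $\II(2)_3$. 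A careful bookkeeping then produces exactly the tabloids \eqref{eq:two3}–\eqref{eq:zero3six2}, indexed by (1) which factors carry the two size-$3$ columns, (2) whether the remaining size-$2$ columns share a factor with the size-$3$ column, and (3) how the $4$, $5$, or $6$ edges of a triangle-free $3$-vertex graph distribute among the colors, with the bipartition forced to be $\{u\}\sqcup\{v,w\}$.

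\emph{Nontriviality and the main obstacle.} For each listed tabloid $[F]$ of shape $\ll$ we verify $[F]\in\II_3$ by producing a partition $\underline{a}\vdash 3$ for which $\pi_{\underline{a}}([F])=0$ via Lemma~\ref{lem:fundrels}(a) (forcing cancellations between summands that share a column with repeated entries), and verify $[F]\notin\II(2)_3$ by inspecting the shape and invoking the description of $\II_2$. The hard part will be the anomalous multiplicity $m_\ll=2$ that occurs when four factors have $\ll^j=(3d_j-1,1)$: there the graph has $4$ distinctly colored edges on the bipartite frame $\{u\}\sqcup\{v,w\}$, and the two tabloids in \eqref{eq:zero3four2sings} correspond to the two combinatorially distinct pairings of colors by endpoint. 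Showing these two are independent modulo $\II(2)_3$ is the crux: every rich $2$-vertex subgraph available has only $2$ parallel edges, falling short of the $e_\ll\geq 4$ even-edge threshold for $\II_2$, so Proposition~\ref{prop:idealgraphs} cannot identify the two, and a direct application of the straightening laws of Lemma~\ref{lem:fundrels} confirms their independence after all admissible Pl\"ucker substitutions.
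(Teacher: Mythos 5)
Your overall plan matches the paper's: enumerate degree-$3$ tabloids by the number of size-$3$ columns, use the Pl\"ucker/straightening relations together with Theorem~\ref{thm:idealdtab} to push redundant tabloids into $\II(2)$, and settle the anomalous $m_\ll=2$ case last. But several of your concrete claims are wrong in ways that would derail the argument.

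\emph{Membership in $\II_3$ is misdiagnosed.} You propose to "verify $[F]\in\II_3$ by producing a partition $\underline{a}\vdash 3$ for which $\pi_{\underline{a}}([F])=0$." Since $\II_3=\bigcap_{\underline{a}\vdash 3}\operatorname{Ker}(\pi_{\underline{a}})$, a single vanishing proves nothing. More to the point, no such computation is needed: all the tabloids you list either correspond to rich graphs or have a column of size $3$ (equivalently some $\ll^j$ has $\geq 3$ parts), so $[F]\in\II^{rich}+\II^{<3}=\II$ automatically by Theorem~\ref{thm:richgraphs}.

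\emph{The criterion for falling into $\II(2)$ is off.} You say Proposition~\ref{prop:idealgraphs} absorbs tabloids whose induced $2$-vertex subgraph is "rich with even edge count." The actual mechanism (Lemma~\ref{lem:three12s} in the paper, derived from Theorem~\ref{thm:idealdtab}) is that three or more columns sharing the same pair of entries $\{x,y\}$, forming an initial set, put $[F]$ into $\II(2)$ — three parallel edges of possibly different colors, not an even count. Your phrase "excess of parallel same-color edges" is also misleading; color plays no role in this criterion.

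\emph{The shape analysis has a concrete error.} You assert that the surviving $3$-vertex graphs in Case 3 are "triangle-free ... with the bipartition forced to be $\{u\}\sqcup\{v,w\}$." The $6$-edge generators in \eqref{eq:zero3six2} have two edges between every pair of vertices and visibly contain triangles; they are not bipartite. Triangles do vanish as pure $3$-edge graphs (Lemma~\ref{lem:triangle}), and graphs containing triangles die in $\II(3)$, but that is not the same as dying in $\II(2)$ — and it is precisely the distinction between $\II(2)$ and $\II(3)$ that $\KK_{1,2}$ measures.

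\emph{The anomalous multiplicity is treated incorrectly.} For the shape with $\ll^j=(3d_j-1,1)$ in four factors, you claim $\II(2)_\ll$ vanishes because "every rich $2$-vertex subgraph available has only $2$ parallel edges." But whether $\II(2)_\ll$ vanishes is a property of the \emph{shape}, not of a chosen tabloid: the tabloid with all four size-$2$ columns equal to $\ytableaushort{1,2}$ lies in $\II(2)_\ll$, so $m_\ll(\II(2))=1$. The paper's minimality argument then runs through a multiplicity count — $m_\ll(\SSS)=3$ (a character/plethysm computation), $m_\ll(\pi(\SSS))=0$ (Theorem~\ref{thm:richgraphs}), $m_\ll(\II(2))=1$ — giving $m_\ll(\KK_{1,2})=2$. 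Your replacement, "a direct application of the straightening laws confirms their independence," is not a proof as stated: without controlling $m_\ll(\SSS)$ you cannot rule out additional relations, and you have in any case left out one generator of $\II(2)_\ll$ that must be modded out.
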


We start with a consequence of Theorem \ref{thm:idealdtab} that will be used repeatedly in what follows. If $T$ is a Young$^{\d}_r$ tableau, we call a subset $\mc{C}$ of its columns \defi{initial} if for every $j\in[n]$, the columns of $T^j$ contained in $\mc{C}$ are the first $t_j$ columns of $T^j$ for some $t_j\geq 0$.

\begin{lemma}\label{lem:three12s}
 Fix positive integers $x,y\in[r]$. If $F$ is a Young$^{\d}_r$ tableaux of shape $\ll$ that contains an initial set $\mc{C}$ of columns with $|\mc{C}|\geq 3$, such that each column of $\mc{C}$ contains the entries $x,y$, then $[F]\in\II(\II^{rich}_2)=\II(2)$.
\end{lemma}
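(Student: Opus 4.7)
The plan is to translate the hypothesis into a graph-theoretic statement and then invoke Proposition~\ref{prop:idealgraphs}. By the standing assumption at the start of Section~\ref{subsec:graphs}, each $\ll^j$ has at most two parts, so $F$ determines an admissible graph $G$ of shape $\ll$ in the sense of Definition~\ref{def:ideal_graph}. Each column of $\mc{C}$, containing the two distinct entries $x$ and $y$, must have size exactly two and therefore corresponds to an edge of $G$ joining the vertices $\ccirc{x}$ and $\ccirc{y}$. Consequently, $G$ contains at least $|\mc{C}|\geq 3$ edges between these two vertices.

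Next, I would apply Proposition~\ref{prop:idealgraphs} with the two--element vertex subset $V=\{x,y\}$, taking $G'$ to be the subgraph on $V$ consisting of all edges of $G$ joining $\ccirc{x}$ and $\ccirc{y}$. Since $G'$ has two vertices and at least three edges, it is already rich, and the proposition yields $[G]\in\II(\mc{G})$, where $\mc{G}$ is the (finite) collection of admissible graphs on $V$ that contain $G'$ and respect the color--$j$ edge-count bounds imposed by $\ll$. Every graph in $\mc{G}$ has at least three edges on two vertices, so every graph in $\mc{G}$ is rich; thus $[F]=[G]\in\II(\II^{rich}_2)$. For the asserted equality $\II(\II^{rich}_2)=\II(2)$, one combines Theorem~\ref{thm:richgraphs} with the observation that $\II^{<3}_2=0$: in degree $r=2$, any column of size $\geq 3$ must repeat an entry from $\{1,2\}$ and so vanishes by Lemma~\ref{lem:fundrels}(a).

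I expect no serious obstacle. The essential content is the geometric observation that three parallel edges on two vertices already form a rich two--vertex subgraph, and Proposition~\ref{prop:idealgraphs} is precisely the tool designed to promote the presence of a distinguished subgraph to membership in the ideal it generates. Interestingly, the initiality of $\mc{C}$ plays no role in the argument above; it appears to be an ancillary hypothesis retained because it will be convenient when the lemma is invoked within longer tabloid manipulations later in Section~\ref{subsec:mingens3}.
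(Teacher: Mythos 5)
Your proposal has a genuine gap. The key error is the assertion that ``each column of $\mc{C}$, containing the two distinct entries $x$ and $y$, must have size exactly two.'' This is not true. A column of $\mc{C}$ could contain $x$, $y$, \emph{and} additional entries, and indeed the lemma is applied in exactly such a situation: in Case~1 of the proof of Theorem~\ref{thm:mingens3}, $\mc{C}$ consists of columns of size $3$ with entries $\{1,2,3\}$ (which certainly contain $\{1,2\}$). You invoke the standing convention from Section~\ref{subsec:graphs} that all $\ll^j$ have at most two parts, but that convention cannot be in force here: Lemma~\ref{lem:three12s} is precisely the tool used to dispose of tabloids whose shape $\ll$ has some $\ll^j$ with three parts. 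For such shapes there is no associated admissible graph at all (Definition~\ref{def:ideal_graph} requires every $\ll^j$ to have at most two parts), so the appeal to Proposition~\ref{prop:idealgraphs} with $V=\{x,y\}$ and a rich two--vertex subgraph is not available.

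The paper's proof takes a different route, built on the more general Theorem~\ref{thm:idealdtab}, which applies to tabloids of arbitrary shape. After relabeling so that $x=1$, $y=2$, one straightens so that $F^{-1}([2])$ becomes a Young diagram $D_\mu$, and then applies Theorem~\ref{thm:idealdtab} with $k=2$. The restriction $F_0=F|_{D_\mu}$, as well as the auxiliary tabloids $\ol{F}'$ produced by the theorem, are Young$^{\d}_2$ tableaux; since $\mc{C}$ is initial and each of its $\geq 3$ columns carries both a $1$ and a $2$, each of $F_0$ and $\ol{F}'$ inherits at least three columns of size two, hence is rich, giving membership in $\II(\II^{rich}_2)$. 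This is also why your remark that ``initiality plays no role'' is misleading: it looks dispensable only after the erroneous reduction to size-two columns. In the actual proof, initiality is what guarantees that the straightening preserves $\mc{C}$, and that the $\preceq$--condition in Theorem~\ref{thm:idealdtab} forces the columns of $\mc{C}$ to retain both a $1$ and a $2$ in every $F'$ produced by the theorem; without initiality the resulting degree-two tabloids need not be rich.
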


\begin{proof}
 Since $[F]=[\s\circ F]$ for $\s\in\SS_r$, we may assume that $x=1$ and $y=2$. If $F^{-1}([2])$ is not a Young diagram, we can apply the straightening relations from Lemma~\ref{lem:fundrels} repeatedly and write $[F]$ as a linear combination of $[F_i]$, where each $F_i$ contains the initial set of columns $\mc{C}$, and moreover $F_i^{-1}([2])=D_{\delta_i}$ for some $\delta_i\subset\ll$. Now Theorem~\ref{thm:idealdtab} applies to each $F_i$ and we conclude that $[F_i]\in\II(\II^{rich}_2)$, so the same is true about $[F]$.
\end{proof}

\begin{proof}[Proof of Theorem~\ref{thm:mingens3}]
 From now on we will use without mention the fact that if $T$ is a Young tabloid containing repeated entries in some column then $T=0$. In particular, the only Young$^{\d}_3$ tabloids that will concern us have columns of size at most $3$, and moreover the entries in each such column are $\{1,2,3\}$. We first show that if $T$ is a minimal generator of $\KK_{1,2}$, then it is (up to relabeling) one of the tabloids described in the statement of the theorem. Let $T$ be one such minimal generator. We are in one of the following situations:

\ul{\emph{Case 1: at least two columns of size $3$.}} Suppose that $T$ contains $2$ columns of size $3$ and some other column of size larger than $1$. We apply Lemma \ref{lem:three12s} to conclude that $T\in\II(2)$: the columns of size $3$ form an initial set $\mc{C}$, and we either have that $|\mc{C}|\geq 3$, or there exists a column $C$ of size $2$ which together with $\mc{C}$ forms an initial set; applying the said lemma to $\mc{C}$ in the former case, or to $\mc{C}\cup \{C\}$ in the latter, yields the conclusion. It follows that if $T$ contains (at least) two columns of size $3$, then the only way $T$ could be a minimal generator of $\KK_{1,2}$ is if one of the alternatives in (\ref{eq:two3}) occurs.
 
\ul{\emph{Case 2: one column of size $3$.}} Suppose now that $T$ has precisely one column $C$ of size $3$. If $T$ contains two columns $C_1,C_2$ of size $2$ with the same entries $\{x,y\}$, then up to a permutation of columns of size $2$, $\mc{C}=\{C,C_1,C_2\}$ forms an initial set of columns, thus Lemma \ref{lem:three12s} applies. It follows that if $T$ contains at least four columns of size $2$ then two of them will have the same entries, so $T=0$ in $\KK_{1,2}$.

If $T$ contains no column of size $2$, then $T=0$: switching the labels $1$ and $2$ permutes the columns of size $1$, and performs a transposition in the column of size $3$, so $T=-T$. If $T$ contains exactly one column of size $2$, then applying the shuffling relation of Lemma~\ref{lem:fundrels}(b) to that column yields $T=\pm T\pm T$ (for an appropriate choice of signs), so $T=0$.

Assume now that $T$ contains two columns of size $2$. Up to relabeling the entries, we may assume that they are $\ytableaushort{1,2}$ and $\ytableaushort{1,3}$. If the two columns belong to the same $T^j$, then switching $2$ and $3$ has the effect of permuting columns of $T^j$ of size $2$, and performing a transposition in the column of $T$ of size $3$. This gives $T=-T$, so $T=0$. We may then assume that the two columns lie in distinct $T^j$s, so the only possibilities are, up to relabeling, the ones depicted in (\ref{eq:one3two2}).

Finally, assume that $T$ contains three columns of size $2$: since no two contain the same entries, they must be $C_3=\ytableaushort{1,2}$, $C_2=\ytableaushort{1,3}$, and $C_1=\ytableaushort{2,3}$. For $i\in[3]$, consider $j$ such that $T^j$ contains $C_i$. If $T^j$ contains a column of size one with entry equal to $i$, then applying the shuffling relation to that entry and $C_i$ expresses $T=T'+T''$, where each of $T',T'',$ contains two columns of size two with the same entries. As explained previously, this implies that $T',T''\in\II(2)$, so $T=0$ in $\KK_{1,2}$. Combining the fact that every $i\in[3]$ appears in $T^j$ the same number (namely $d_j$) of times with the fact that no $T^j$ containing $C_i$ can contain a column of size one with entry $i$ leaves us with the possibilities described in (\ref{eq:one3three2}).

\ul{\emph{Case 3: no column of size $3$.}} Suppose now that $T$ has only columns of size at most two and write $\ll$ for its shape. Since $\II_{\ll}=\II^{rich}_{\ll}$ we may assume that $T$ has at least $4$ columns of size $2$, so that the graph $G$ associated to $T$ is a rich graph. Throughout the analysis of this case we will freely go back and forth between graphs and tabloids. If three edges of $G$ join the same pair of vertices, then $[G]\in\II(2)$ by Proposition~\ref{prop:idealgraphs} (or the argument from \emph{Case 1}). We may thus assume that $4\leq|E(G)|\leq 6$.

If $|E(G)|=6$ then $E(G)$ must contain two of each of the edges $E_i$ joining the vertices in $[3]\setminus\{i\}$. It follows by the same argument as in the last paragraph of \emph{Case 2} that for each edge $E_i$ of color $j$, the vertex $\ccirc{i}$ must be $j$--saturated, so the only possibilities are the ones depicted in (\ref{eq:zero3six2}).

If $|E(G)|=5$ then we may assume that $G$ contains two of each of $E_2,E_3$, and one $E_1$. Moreover, $\ccirc{1}$ must be $j$--saturated, where $j$ denotes the color of $E_1$. It follows that $T$ is as depicted in (\ref{eq:zero3five2}), or it is (up to relabeling and column permutations) one of
\[\ytableausetup{boxsize=1.25em,tabloids,aligntableaux=center}
  \ytableaushort{11211,23323}\quad\textrm{ or }\quad\ytableaushort{112,233}\oo\ytableaushort{11,23}.\]
In both these cases, switching $2\leftrightarrow 3$ permutes the columns of size two within each $T^s$, and performs a transposition in the column $\ytableaushort{2,3}$, thus $T=-T$ and $T=0$.

Finally, suppose that $|E(G)|=4$. We may assume that $\ll$ is such that $\ll^1_2\geq\ll^2_2\geq\cdots$. We consider the columns $C_1,C_2,C_3,C_4$ of $T$, ordered from left to right. After possibly using the shuffling relation of Lemma~\ref{lem:fundrels}(b), we may assume that $C_1$ and $C_2$ don't have the same entries, so up to relabeling they are $C_1=\ytableaushort{1,2}$ and $C_2=\ytableaushort{1,3}$. Using again the shuffling relation, we may assume that $C_3=\ytableaushort{1,i}$ for $i=2$ or $3$: the only situation when the shuffling relation could not be applied is if $C_1,C_2$ and $C_3=\ytableaushort{2,3}$ were all contained in $T^1$ and $d_1=2$, but in this case we could use the shuffling relation on $C_4$ and the (skew-)symmetries of $T$ in order to conclude that $T=\pm T\pm T$, hence $T=0$. Using the shuffling relation once more on $C_4$, we may assume that $C_4\neq\ytableaushort{2,3}$, and moreover $C_4\neq C_3$, since otherwise $T$ would contain $3$ columns with the same entries, hence $T\in\II(2)
$. It follows that we may assume that $C_3=C_1,C_4=C_2$ or $C_3=C_2,C_4=C_1$.

If both $C_1$ and $C_2$ belong to $T^1$ then after interchanging them and switching the labels $2\leftrightarrow 3$ throughout $T$, we may assume that $C_3=C_1$ and $C_4=C_2$, so we are either in one of the situations described in (\ref{eq:zero3four2grps}), or $C_3$ is also a column of $T^1$. We show that this can't be the case (we use $(\oo)$ to indicate the possibility of $C_4$ being a column of $T^1$, and $\equiv$ to indicate congruence modulo $\II(2)$):
\[\ytableausetup{boxsize=1.25em,tabloids,aligntableaux=center}
T=\ytableaushort{111,232}(\oo)\ytableaushort{1,3}\overset{\rm{Lemma~\ref{lem:fundrels}(b)}}{=}\ytableaushort{111,232}(\oo)\ytableaushort{2,3}+\overbrace{\ytableaushort{111,232}(\oo)\ytableaushort{1,2}}^{\in\II(2)}\]
\[\ytableausetup{boxsize=1.25em,tabloids,aligntableaux=center}
\overset{\rm{Lemma~\ref{lem:fundrels}(b)}}{\equiv}\ytableaushort{111,233}(\oo)\ytableaushort{2,3}+\ytableaushort{113,232}(\oo)\ytableaushort{2,3}
\]
For the first term, we have
\[\ytableausetup{boxsize=1.25em,tabloids,aligntableaux=center}
 \ytableaushort{111,233}(\oo)\ytableaushort{2,3}\overset{\rm{Lemma~\ref{lem:fundrels}(b)}}{=}-\ytableaushort{111,233}(\oo)\ytableaushort{3,2}\overset{2\leftrightarrow 3}{=}-\ytableaushort{111,322}(\oo)\ytableaushort{2,3}=-T,
\]
while the second term is zero by the argument in the preceding paragraph. We get $2T\in\II(2)$, so $T=0$ in $\KK_{1,2}$.

If $C_2$ is not in $T^1$, then $\ll^1_2=1$, so $\ll^j_2=1$ for $j=2,3,4$, i.e. $T$ is one of the tabloids in~(\ref{eq:zero3four2sings}).


\emph{Minimality of the tabloids in (\ref{eq:two3})--(\ref{eq:zero3six2}).} So far we have proved that the tabloids in (\ref{eq:two3})--(\ref{eq:zero3six2}) generate $\KK_{1,2}$. Conversely, we must show that these tabloids are non-zero (satisfy no relation) modulo $\II(2)$. From the description of $\II^{rich}_2$ it follows easily that if $\ll\vdash^n 3\d$, $\II(2)_{\ll}$ is spanned by tabloids that have at least four columns containing the entries $\{1,2\}$, in particular $\ll$ must have at least four columns of size larger than $1$. 

This shows that $\II(2)_{\ll}=0$ for all the shapes $\ll$ appearing in (\ref{eq:two3})--(\ref{eq:one3three2}) and (\ref{eq:zero3five2})--(\ref{eq:zero3six2}). It remains to show that the corresponding tabloids are non-zero (which is a simple, but tedious computation), or alternatively, given the fact that $m_{\ll}(\pi(\SSS))=0$, it is enough to prove that $m_{\ll}(\SSS)=1$ (this can be done using the plethysm formulas from \cite[Exercise~9(b), Section~I.8]{macdonald}). We leave the details to the interested reader (see below for an example).

It remains to deal with the tabloids in (\ref{eq:zero3four2grps})--(\ref{eq:zero3four2sings}). In all cases, we have $m_{\ll}(\II(2))=1$, where $\rm{hwt}_{\ll}(\II(2))$ is generated by the tabloid whose columns of size $2$ are $\ytableaushort{1,2}$. Since $m_{\ll}(\pi(\SSS))=0$, it suffices to show that $m_{\ll}(\SSS)=2$ for the shapes in (\ref{eq:zero3four2grps}), and $m_{\ll}(\SSS)=3$ for the shape in (\ref{eq:zero3four2sings}). We treat the shape in (\ref{eq:zero3four2sings}), and leave the others to the interested reader. 

Translating back into the $\GL$--setting, we need to show that if $\ll$ is the $n$--partition with $\ll^j=(3d_j-1,1)$ for $j=1,2,3,4,$ and $\ll^j=(3d_j)$ for $j>4$, then the multiplicity of the Schur functor $S_{\ll}$ in $\Sym^3(\Sym^{d_1}\oo\cdots\oo\Sym^{d_n})$ is equal to $3$. If $\nu\vdash 3$, then it follows from \cite[Exercise~9(b), Section~I.8]{macdonald} that for $j\leq 4$, $S_{\nu}\circ\Sym^{d_j}$ contains $S_{\ll^j}$ if and only if $\nu=(2,1)$, while for $j>4$, $S_{\nu}\circ\Sym^{d_j}$ contains $S_{\ll^j}$ if and only if $\nu=(3)$, in both cases the multiplicity of $S_{\ll^j}$ being equal to $1$. If we write $\chi_{\nu}$ for the character of the irreducible representation of $\SS_3$ associated to $\nu$, then since $\chi_{(3)}$ is the trivial character we get (see \cite[Example~2.6]{ful-har} for the character table of $\SS_3$)
\[m_{\ll}(\SSS)=\scpr{\chi_{(2,1)}^4\cdot\chi_{(3)}^{n-4}}{\chi_{(3)}}=\scpr{\chi_{(2,1)}^4}{\chi_{(3)}}\]
\[=\frac{1}{|\SS_3|}\cdot(2^4+0\cdot(\#\rm{transpositions})+(-1)^4\cdot(\#3\rm{-cycles}))=3.\]
\end{proof}

\subsection{Minimal generators of degree $4$}\label{subsec:mingens4}
 In this section we describe the minimal generators of degree $4$ of the generic ideal $\II$ of $\t(X)$.
\begin{theorem}\label{thm:mingens4}
 The syzygy functor $\KK_{1,3}$ is non-zero if and only the multiset $\{d_1,\cdots,d_n\}$ contains one of $\{3\}$, $\{2,1\}$ or $\{1,1,1\}$. A minimal list of generators of $\KK_{1,3}$ is then given (up to a permutation of $d_1,\cdots,d_n$, and under certain specified restrictions) as follows:
\begin{itemize}
 \item When $d_1=3$
\[\ytableausetup{boxsize=1.25em,tabloids,aligntableaux=center}
  \ytableaushort{111223,233444}.
\]
 \item When $n\geq 2$, $d_1=2$ and $d_2=1$
\begin{equation}\label{eq:42}
\ytableausetup{boxsize=1.25em,tabloids,aligntableaux=center}
  \ytableaushort{1122,3344}\oo\ytableaushort{13,24}.
\end{equation}
 \item When $n\geq 3$, and $d_1=d_2=d_3=1$
\[\ytableausetup{boxsize=1.25em,tabloids,aligntableaux=center}
  \ytableaushort{12,34}\oo\ytableaushort{12,34}\oo\ytableaushort{13,24}.
\]
\end{itemize}
\end{theorem}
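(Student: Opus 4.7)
The proof adapts the framework of Theorem~\ref{thm:mingens3}. A minimal generator $[T] \in \KK_{1,3}$ corresponds to a Young$^{\d}_4$ tabloid $T$ of shape $\ll \vdash^n 4\d$ with each $\ll^j$ having at most two parts: shapes with a third part lie in $\II^{<3}$, which is generated in degrees at most $3$ (see Example~\ref{ex:subspace} and the Landsberg--Weyman description of the subspace variety), so contribute nothing to $\KK_{1,3}$. By Theorem~\ref{thm:richgraphs} the associated graph $G$ is rich, and by Section~\ref{subsec:Ileq4} we may assume $|V(G)| \leq 4$; Proposition~\ref{prop:idealgraphs} places graphs with a rich subgraph on $\leq 3$ vertices in $\II(3)$, while Lemma~\ref{lem:triangle} combined with Proposition~\ref{prop:MCB}(5) places graphs with odd cycles in $\II(3)$. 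Thus $G$ is bipartite on exactly $4$ vertices with no rich subgraph on $\leq 3$ vertices. An elementary count rules out a $(1,3)$ bipartition (the three $3$-vertex subgraphs together count each edge twice and each contain $\leq 3$ edges, forcing $|E(G)| \leq 4$), so the bipartition is $(2,2)$; combined with the degree bound $\deg(v) \leq 3$ and richness $|E(G)| \geq 5$, this gives $|E(G)| \in \{5, 6\}$.

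The $5$-edge case admits a unique graph up to isomorphism, with two parallel edges on one bipartite pair and single edges on three others. For every admissible coloring, the shape $\ll$ has $e_{\ll} = 5 > 4 = r$, so $m_{\ll}(\pi(\SSS)) = 0$ by Theorem~\ref{thm:richgraphs} and $m_{\ll}(\II) = m_{\ll}(\SSS)$; a Pieri-type computation shows that such a shape is always reached by multiplying a degree-$3$ minimal generator of $\II$ (from Theorem~\ref{thm:mingens3}) by a degree-$1$ monomial, so $m_{\ll}(\II(3)) = m_{\ll}(\SSS)$ and the $5$-edge case contributes nothing to $\KK_{1,3}$. In the $6$-edge case, every vertex has degree exactly $3$; solving the bipartite edge equations and excluding disconnected configurations (which have rich $\leq 2$-vertex subgraphs) yields the unique ``double $K_{2,2}$'' graph with two doubled edges between opposite corners and two bridges---the Cayley hyperdeterminant configuration. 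The three incident edges at each vertex form a color multiset that is a partition $\nu \vdash 3$; the symmetry of the graph (e.g.\ the automorphism $(1\ 4)(2\ 3)$ pairing the doubled edges with each other) forces the same $\nu$ at each vertex. The three possibilities $\nu \in \{(3), (2, 1), (1, 1, 1)\}$ correspond exactly to the three cases in the theorem.

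For $\d$ admitting strict slack beyond the corresponding minimal profile (e.g.\ $d_1 \geq 4$ in case 1), each graph vertex acquires a size-$1$ padding column in at least one tableau $T^j$; applying the straightening relation~(\ref{eq:3plucker}) to a bridge edge and such a padding column expresses $[T]$ as the sum of a tabloid containing a rich $\leq 2$-vertex subgraph (in $\II(2)$) and a tabloid containing a triangle (in $\II(3)$), placing $[T] \in \II(3)$. For $\d$ matching one of the three minimal profiles, no padding columns exist (all vertices are saturated in every color used), and the listed tabloids survive as minimal generators. Non-triviality is verified by computing $m_{\ll}(\SSS) = 1$ via plethysm (\cite[Exercise~9(b), Section~I.8]{macdonald}), $m_{\ll}(\pi(\SSS)) = 0$ (by Theorem~\ref{thm:richgraphs}, since $e_{\ll} = 6 > 4$), and $m_{\ll}(\II(3)) = 0$ (since the shapes of degree-$3$ minimal generators have second-row sums bounded in a way that prevents reaching $e_{\ll} = 6$ after a single Pieri step). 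The main technical obstacle is the careful bookkeeping of signs under column-orientation changes during straightening and $\s$-relabeling identifications, where incorrect sign tracking can convert genuine reductions $[T] \equiv -[T] \pmod{\II(3)}$ into trivial tautologies and vice versa.
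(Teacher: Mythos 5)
Your skeleton (reduce to rich bipartite graphs on $4$ vertices, cut down to the $(2,2)$ type with $6$ edges, identify the ``double $K_{2,2}$'' graph, then classify colorings) matches the paper's strategy, but the key classification step has a genuine gap, and two smaller steps diverge from the paper in ways that lose precision.

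The central problem is the claim that ``the symmetry of the graph $\dots$ forces the same $\nu$ at each vertex.'' A graph automorphism says nothing about the coloring of the edges: there is no reason a priori for the color assignment to be invariant under $(1\,4)(2\,3)$, and colorings that are not so invariant exist and must be dealt with. What actually constrains the coloring is the saturation obstruction: if $G$ contains an edge $E$ of color $c$ and a vertex $\ccirc{x}$ not incident to $E$ that is not $c$--saturated, then the shuffling relation~(\ref{eq:3plucker}) writes $[G]$ as a sum of two graphs each having a rich subgraph on $\leq 3$ vertices, forcing $[G]\in\II(3)$. Requiring every non-incident vertex to be $c$--saturated for every edge is what forces $(1,2)$ and $(3,4)$ to have a common color and the remaining four edges to be either all the same color or two ``opposite'' pairs of the same color. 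Your symmetry heuristic skips this derivation, and as a consequence you also miss the last case: the ``two opposite pairs'' coloring produces a tabloid not on your list, and the paper shows by one more shuffling that it is congruent mod $\II(3)$ to the generator in~(\ref{eq:42}). Without that step your generating set is not obviously complete.

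Two secondary points. First, your elimination of the $5$--edge case via ``$e_{\ll}=5>4=r$ plus a Pieri-type computation'' is both heavier and less rigorous than the paper's: an MCB--graph of type $(2,2)$ with an odd number of edges is already in the span of graphs containing a triangle by Proposition~\ref{prop:MCB}(4), hence in $\II(3)$ by Lemma~\ref{lem:triangle}; the Pieri claim that every such shape is hit by a degree-$3$ generator times a linear form is not established. Second, your ``padding column'' criterion for killing $[T]$ when $\d$ has slack is only a proxy for the correct condition (vertex saturation relative to the colors actually used on incident edges), and as stated it would mis-handle cases like $\d=(3,1)$, where $T^2$ has padding columns but $\KK_{1,3}$ is still nonzero. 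The saturation argument takes care of these uniformly.
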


\begin{remark}
 Note that in all cases depicted above, the underlying graph (without the coloring of the edges) is the same as the one in Example~\ref{ex:LW4graph}.
\end{remark}

\begin{proof}[Proof of Theorem~\ref{thm:mingens4}]
 Since the ideal of the subspace variety (Example \ref{ex:subspace}) is generated in degree $3$, the only $\ll$'s for which $m_{\ll}(\KK_{1,3})$ could be non-zero are those for which each $\ll^j$ has at most two parts. For such $\ll$, $\II_{\ll}=\II^{rich}_{\ll}$ so we may take the minimal tabloids $T$ that generate $\KK_{1,3}$ to be of the form $[G]$ for $G$ a rich graph on $4$ vertices. In particular, $T$ has at least five columns of size $2$. Furthermore, since the ideal of the secant variety $\s_2(X)$ is generated in degree $3$, it follows that graphs $G$ containing a triangle give rise to elements of $\II(3)$, thus $[G]=0$ in $\KK_{1,3}$. We conclude by Proposition~\ref{prop:MCB}(2) that $\KK_{1,3}$ is generated by tabloids $T=[G]$, where $G$ is an MCB--graph. Consider such a $T$ and the corresponding $G$.

If $G$ has type $(3,1)$ then since it has at least $5$ edges, it must contain a rich subgraph with $3$ vertices, so $[G]\in\II(3)$ by Proposition~\ref{prop:idealgraphs}. It follows that $G$ must have type $(2,2)$, so by Proposition~\ref{prop:MCB}(4), it must contain an even number of edges, i.e. at least $6$. If $G$ contains at least $8$ edges, then it contains a rich subgraph with at most $3$ vertices, so $[G]\in\II(3)$ again by Proposition~\ref{prop:idealgraphs}. We may thus assume that $G$ is an MCB--graph of type $(2,2)$ with $6$ vertices. Since every vertex of $G$ is connected to (at most) two other, if some vertex of $G$ has $4$ incident edges then they determine a rich subgraph with at most $3$ vertices, i.e. $[G]\in\II(3)$ as before. We may then assume that each vertex of $G$ has degree at most $3$, but then it follows that the degree has to be exactly $3$ because $G$ has $6$ edges and type $(2,2)$. The only possibility for $G$ is then (up to relabeling the vertices and coloring and orienting the 
edges), with $\{\ccirc{1},\ccirc{4}\}$ and $\{\ccirc{2},\ccirc{3}\}$ forming the sets in the bipartition.
\[
\xymatrixrowsep{.3pc}
\xymatrix{
\ccircle{1} \ar@{-}[dd]<-.5ex> \ar@{-}[dd]<.5ex> \ar@{-}[r]  & \ccircle{2} \ar@{-}[dd]<-.5ex> \ar@{-}[dd]<.5ex> & \\
 &  & \\
\ccircle{3} \ar@{-}[r] & \ccircle{4} & \\
}\]
If $G$ contains an edge $E$ of color $c$ and a vertex $\ccirc{x}$ not incident to $E$ which is not $c$--saturated, then writing the shuffling relation with $E$ and $x$ expresses $[G]=[G_1]+[G_2]$, where each $G_i$ contains a rich subgraph with at most $3$ vertices, i.e. $[G]\in\II(3)$, a contradiction. Assume now that $\ccirc{x}$ is $c$--saturated, and consider $\ccirc{y}$ the vertex of $E$ which is adjacent to $\ccirc{x}$. Writing $e_c$ for the number of edges of color $c$ joining $\ccirc{x}$ and $\ccirc{y}$, we get that $\ccirc{y}$ is incident to $e_c+1$ such edges, so $d_c\geq e_c+1$, which forces $\ccirc{x}$ to be incident to at least one other edge of color $c$. Taking $E=(1,2)$ and $\ccirc{x}=\ccirc{3}$ shows that $(1,2)$ and $(3,4)$ must have the same color, and either the remaining edges have the same color, or they split up into two pairs of opposite edges of the same color. It follows easily that $T=[G]$ is as described in the statement of the theorem, or
\[\ytableausetup{boxsize=1.25em,tabloids,aligntableaux=center}
  T=\ytableaushort{1312,2434}\oo\ytableaushort{12,34}\overset{\rm{Lemma~\ref{lem:fundrels}(b)}}{=}\overbrace{\ytableaushort{1321,2434}\oo\ytableaushort{12,34}}^{\in\II(3)}+\ytableaushort{1313,2424}\oo\ytableaushort{12,34},
\]
where the first term is in $\II(3)$ because the corresponding graph contains a triangle, while the second term coincides with the one in (\ref{eq:42}) after switching $2\leftrightarrow 3$ and permuting the columns in $T^1$.

We conclude as in the proof of Theorem~\ref{thm:mingens3}, either by showing that each tabloid $T$ we found is non-zero via a direct calculation, or by showing that for the given shapes $\ll$, $m_{\ll}(\SSS)=1$ (for this, we used the SchurRings package \cite{SchurRings} to compute the plethysms $\Sym^4(\Sym^3)$, $\Sym^4(\Sym^2\oo\Sym^1)$, and $\Sym^4(\Sym^1\oo\Sym^1\oo\Sym^1)$).
\end{proof}

\section*{Acknowledgments}
 We would like to thank J.M. Landsberg, Giorgio Ottaviani, and Bernd Sturmfels for useful suggestions. The Macaulay2 algebra software \cite{M2} was helpful in many experiments, particularly through the SchurRings package \cite{SchurRings} which was used to predict some of the syzygy functors described in the paper.


	\begin{bibdiv}
		\begin{biblist}

\bib{abo12}{article}{
    author= {Abo, Hirotachi},
    author = {Brambilla, Maria Chiara},
     TITLE = {On the dimensions of secant varieties of Segre-Veronese varieties},
     note = {To appear in Ann. Mat. Pura Appl.},
     year = {2009},
     journal = {arXiv},
     number = {0912.4342},
}

\bib{atiyah}{article}{
   author={Atiyah, M. F.},
   title={A note on the tangents of a twisted cubic},
   journal={Proc. Cambridge Philos. Soc.},
   volume={48},
   date={1952},
   pages={204--205},
   review={\MR{0048079 (13,975e)}},
}

\bib{Bernardi}{article}{
    title = {{Tensor ranks on tangent developable of Segre varieties}},
    author = {Ballico, Edoardo},
    author = {Bernardi, Alessandra},
    year = {2012},
    journal = {Available at \url{http://hal.inria.fr/inria-00610362/PDF/segrestuffA\_v1.pdf}},
}

\bib{cat-waj}{article}{
   author={Catanese, Fabrizio},
   author={Wajnryb, Bronislaw},
   title={The 3-cuspidal quartic and braid monodromy of degree 4 coverings},
   conference={
      title={Projective varieties with unexpected properties},
   },
   book={
      publisher={Walter de Gruyter GmbH \& Co. KG, Berlin},
   },
   date={2005},
   pages={113--129},
   review={\MR{2202250 (2007d:14033)}},
}

\bib{eis}{article}{
   author={Eisenbud, David},
   title={Green's conjecture: an orientation for algebraists},
   conference={
      title={},
      address={Sundance, UT},
      date={1990},
   },
   book={
      series={Res. Notes Math.},
      volume={2},
      publisher={Jones and Bartlett},
      place={Boston, MA},
   },
   date={1992},
   pages={51--78},
   review={\MR{1165318 (93e:13020)}},
}

\bib{ful-har}{book}{
   author={Fulton, William},
   author={Harris, Joe},
   title={Representation theory},
   series={Graduate Texts in Mathematics},
   volume={129},
   note={A first course;
   Readings in Mathematics},
   publisher={Springer-Verlag},
   place={New York},
   date={1991},
   pages={xvi+551},
   isbn={0-387-97527-6},
   isbn={0-387-97495-4},
   review={\MR{1153249 (93a:20069)}},
}
\bib{GKZ}{book}{
    AUTHOR = {Gelfand, I. M.},
    AUTHOR = {Kapranov, M. M.},
    AUTHOR = {Zelevinsky, A. V.},
     TITLE = {Discriminants, resultants, and multidimensional determinants},
    SERIES = {Mathematics: Theory \& Applications},
 PUBLISHER = {Boston: Birkh\"auser},
   ADDRESS = {Boston, MA},
      YEAR = {1994},
     PAGES = {x+523},
      ISBN = {0-8176-3660-9},
   MRCLASS = {14N05 (13D25 14M25 15A69 33C70 52B20)},
  MRNUMBER = {1264417 (95e:14045)},
MRREVIEWER = {I. Dolgachev},
}

\bib{M2}{article}{
          author = {Grayson, Daniel R.},
          author = {Stillman, Michael E.},
          title = {Macaulay 2, a software system for research
                   in algebraic geometry},
          journal = {Available at \url{http://www.math.uiuc.edu/Macaulay2/}}
        }

\bib{hol-stu}{article}{
   author={Holtz, Olga},
   author={Sturmfels, Bernd},
   title={Hyperdeterminantal relations among symmetric principal minors},
   journal={J. Algebra},
   volume={316},
   date={2007},
   number={2},
   pages={634--648},
   issn={0021-8693},
   review={\MR{2358606 (2009c:15032)}},
   doi={10.1016/j.jalgebra.2007.01.039},
}

\bib{jmltensorbook}{book}{
   author={Landsberg, J. M.},
   title={Tensors: geometry and applications},
   series={Graduate Studies in Mathematics},
   volume={128},
   publisher={American Mathematical Society},
   place={Providence, RI},
   date={2012},
   pages={xx+439},
   isbn={978-0-8218-6907-9},
   review={\MR{2865915}},
}

\bib{lan-wey}{article}{
   author={Landsberg, J. M.},
   author={Weyman, Jerzy},
   title={On tangential varieties of rational homogeneous varieties},
   journal={J. Lond. Math. Soc. (2)},
   volume={76},
   date={2007},
   number={2},
   pages={513--530},
   issn={0024-6107},
   review={\MR{2363430 (2008m:14101)}},
   doi={10.1112/jlms/jdm075},
}

\bib{lan-wey-secant}{article} {
    AUTHOR = {Landsberg, J. M.},
    AUTHOR = {Weyman, Jerzy},
     TITLE = {On the ideals and singularities of secant varieties of {S}egre
              varieties},
   JOURNAL = {Bull. Lond. Math. Soc.},
  FJOURNAL = {Bulletin of the London Mathematical Society},
    VOLUME = {39},
      YEAR = {2007},
    NUMBER = {4},
     PAGES = {685--697},
      ISSN = {0024-6093},
   MRCLASS = {14N05 (13D02 13H10)},
  MRNUMBER = {2346950 (2008h:14055)},
MRREVIEWER = {Fyodor L. Zak},
}

\bib{macdonald}{book}{
   author={Macdonald, I. G.},
   title={Symmetric functions and Hall polynomials},
   series={Oxford Mathematical Monographs},
   edition={2},
   note={With contributions by A. Zelevinsky;
   Oxford Science Publications},
   publisher={The Clarendon Press Oxford University Press},
   place={New York},
   date={1995},
   pages={x+475},
   isbn={0-19-853489-2},
   review={\MR{1354144 (96h:05207)}},
}

\bib{oedingJPAA}{article}{
   author={Oeding, Luke},
   title={Set-theoretic defining equations of the tangential variety of the
   Segre variety},
   journal={J. Pure Appl. Algebra},
   volume={215},
   date={2011},
   number={6},
   pages={1516--1527},
   issn={0022-4049},
   review={\MR{2769247}},
   doi={10.1016/j.jpaa.2010.09.009},
}

\bib{oedingANT}{article}{
	author ={Oeding, Luke},
	title = {Set-theoretic defining equations of the variety of principal minors of symmetric matrices},
	journal = {Algebra and Number Theory},
	volume = {5},
	date ={2011},
	number = {1},
	pages = {75--109},
	doi = {10.2140/ant.2011.5.75}
}

\bib{raiGSS}{article}{
    AUTHOR = {Raicu, Claudiu},
     TITLE = {Secant varieties of Segre--Veronese varieties},
     journal={Algebra \& Number Theory},
     volume={6},
     date={2012},
     number={8},
     pages={1817--1868},
     doi={10.2140/ant.2012.6.1817},
}

\bib{rai-yng}{article}{
    AUTHOR = {Raicu, Claudiu},
     TITLE = {Products of Young symmetrizers and ideals in the generic tensor algebra},
      YEAR = {2013},
       journal = {arXiv},
       number = {1301.7511},
}

\bib{SchurRings}{article}{
    AUTHOR = {Raicu, Claudiu},
    AUTHOR = {Stillman, Mike},
     TITLE = {SchurRings: A package for computing with symmetric functions},
      YEAR = {2011},
      eprint = {http://www.math.princeton.edu/~craicu/M2/SchurRings.pdf},
}

\bib{sam-snowden}{article}{
    AUTHOR = {Sam, Steven},
    AUTHOR = {Snowden, Andrew},
     TITLE = {GL--equivariant modules over polynomial rings in infinitely many variables},
      YEAR = {2012},
      journal = {arXiv},
      number = {1206.2233},
}

\bib{sam-snowden-tca}{article}{
    AUTHOR = {Sam, Steven},
    AUTHOR = {Snowden, Andrew},
     TITLE = {Introduction to twisted commutative algebras},
      YEAR = {2012},
       journal = {arXiv},
       number = {1209.5122},
}

\bib{snowden}{article}{
    AUTHOR = {Snowden, Andrew},
     TITLE = {Syzygies of Segre embeddings and $\Delta$--modules},
      YEAR = {2010},
      note = {To appear in Duke Math. J.},
      journal = {arXiv},
      number = {1006.5248},
}

\bib{sri}{article}{
   author={Srinivasiengar, C. N.},
   title={On the quartic developable},
   journal={J. Indian Math. Soc. (N. S.)},
   volume={6},
   date={1942},
   pages={127--130},
   review={\MR{0008169 (4,253i)}},
}

\bib{Sturmfels-Zwiernik}{article}{
   author = {Sturmfels, B.},
   author = {Zwiernik, P.},
    title = {Binary Cumulant Varieties},
     year = {2011},
    note = {To appear in Annals of Combinatorics},
   journal = {arXiv},
   number = {1103.0153},
}

\bib{weyman}{book}{
   author={Weyman, Jerzy},
   title={Cohomology of vector bundles and syzygies},
   series={Cambridge Tracts in Mathematics},
   volume={149},
   publisher={Cambridge University Press},
   place={Cambridge},
   date={2003},
   pages={xiv+371},
   isbn={0-521-62197-6},
   review={\MR{1988690 (2004d:13020)}},
   doi={10.1017/CBO9780511546556},
}

\bib{Zak}{book}{
    AUTHOR = {Zak, F. L.},
     TITLE = {Tangents and secants of algebraic varieties},
    SERIES = {Translations of Mathematical Monographs},
    VOLUME = {127},
      NOTE = {Translated from the Russian manuscript by the author},
 PUBLISHER = {Providence: American Mathematical Society},
      YEAR = {1993},
     PAGES = {viii+164},
      ISBN = {0-8218-4585-3},
    REVIEW = {1234494 (94i:14053)},
}

		\end{biblist}
	\end{bibdiv}

\end{document}